\@dedicatory\vspace{1em}\centering With an appendix by Nils Matthes \par\vspace{1em}}
\title{Formal multiple Eisenstein series and their derivations}
\author{Henrik Bachmann}
\address{Graduate School of Mathematics, Nagoya University, Nagoya, Japan.}
\email{henrik.bachmann@math.nagoya-u.ac.jp}
\author{Jan-Willem van Ittersum}
\address{Max-Planck-Institut f\"ur Mathematik, Vivatsgasse 7, 53111 Bonn, Germany.
}
\curraddr{Department of Mathematics and Computer Science, University of Cologne,  Weyertal 86-90\\
\indent 50931 Cologne, Germany}
\email{j.w.ittersum@uni-koeln.de}
\address{Department of Mathematical Sciences, University of Copenhagen, Universitetsparken 5\\ \indent  2100 Copenhagen Ø, Denmark}
\email{nils.oliver.matthes@gmail.com}
\subjclass[2020]{Primary 
11F11,
11M32. 
Secondary 13N15,
16T30 
}
\keywords{multiple zeta values, multiple Eisenstein series, derivations, quasi-shuffle algebras, (quasi)modular forms}
\date{\today}
\newtheorem{theorem}{Theorem}[section]
\newtheorem{proposition}[theorem]{Proposition}
\newtheorem{lemma}[theorem]{Lemma}
\newtheorem{corollary}[theorem]{Corollary}
\newtheorem{conjecture}[theorem]{Conjecture}
\numberwithin{equation}{section}
\theoremstyle{definition}
\newtheorem{definition}[theorem]{Definition}
\newtheorem{example}[theorem]{Example}
\newtheorem{remark}[theorem]{Remark}
\newcommand{\Z}{\mathbb{Z}}
\newcommand{\Q}{\mathbb{Q}}
\newcommand{\R}{\mathbb{R}}
\newcommand{\C}{\mathbb{C}}
\newcommand{\Ha}{\mathbb{H}}
\newcommand{\mz}{\mathcal{Z}}
\newcommand{\mf}{\mathcal{M}}
\newcommand{\qmf}{\widetilde{\mf}}
\newcommand{\fmes}{\mathcal{G}^{\!f}} 
\newcommand{\fmesz}{\fil{\lwt}{0}\fmes}
\newcommand{\ames}{\mathcal{E}} 
\newcommand{\fames}{\mathcal{E}^{\!f}} 
\newcommand{\fmz}{\mathcal{Z}^f}
\newcommand{\fqmf}{\widetilde{\mathcal{M}}^{\!f}}
\newcommand{\fmf}{\mathcal{M}^{\!f}}
\newcommand{\fs}{\mathcal{S}^{\!f}}
 \newcommand{\fH}{\mathcal{G}_0^f}
\newcommand{\h}{\mathfrak H}
\newcommand{\im}{\operatorname{im}}
\newcommand{\D}{D}
\newcommand{\W}{W}
\newcommand{\proj}{\pi} 
\newcommand{\df}{\coloneqq}
\newcommand{\deer}{\delta}
\newcommand{\ttt}{\omega_3} 
\newcommand{\dphia}{\Theta^{\varphi,a}}
\newcommand{\dlefta}{\Theta^{[a}}
\newcommand{\drighta}{\Theta^{a]}}
\newcommand{\tA}{\widetilde{\mathcal{A}}}
\newcommand{\tD}{\widetilde{\D}}
\newcommand{\tW}{\widetilde{\W}}
\newcommand{\tdeer}{\widetilde{\deer}}
\newcommand{\tDd}{\tD_\diamond}
\newcommand{\tWd}{\tW_\diamond}
\newcommand{\tdeerd}{\tdeer_\diamond}
\newcommand{\gf}{G^{\!f}\!}
\newcommand{\fzeta}{\zeta^f\!}
\newcommand{\fDelta}{\Delta^\mathfrak{f}}
\DeclareRobustCommand{\eis}{e}
\newcommand{\I}{\mathcal{I}} 
\newcommand{\fil}[2]{\operatorname{Fil}^{\mathrm{#1}}_{#2}}
\newcommand{\lwt}{\operatorname{lwt}}
\newcommand{\dep}{\operatorname{dep}}
\newcommand{\wt}{\operatorname{wt}}
\newcommand{\sltwo}{\mathfrak{sl}_2}
\newcommand{\sltwoz}{\mathrm{SL}_2(\mathbb{Z})}
\newcommand{\genA}{\mathfrak{A}}
\newcommand{\genG}{\mathfrak{G}}
\newcommand{\ind}{\mathbf{1}}
\newcommand{\kd}[2]{\ind_{\substack{#1\\#2}} }
\DeclareRobustCommand{\ai}{\genfrac{[}{]}{0pt}{}}
\DeclareRobustCommand{\bi}{\genfrac{(}{)}{0pt}{}}
\DeclareRobustCommand{\ebi}{\genfrac{}{}{0pt}{}}
\newcommand{\one}{{\bf 1}}
\newcommand{\A}{\mathcal{A}}
\newcommand{\QA}{\Q\langle \A \rangle}
\newcommand{\B}{\mathcal{B}}
\newcommand{\QB}{\Q\langle \B \rangle}
\newcommand{\QLZ}{\Q\langle \LL_z \rangle}
\newcommand{\LL}{\mathcal{L}}
\newcommand{\QL}{\Q\langle \LL \rangle}
\newcommand{\qsh}{\ast_\diamond}
\newcommand{\X}{\underline{X}}
\newcommand{\Y}{\underline{Y}}
\newcommand*\circled[1]{\tikz[baseline=(char.base)]{
    \node[shape=circle,draw,inner sep=0.5pt] (char) {$#1$};}}
\newcommand{\osh}{\raisebox{1.3pt}{\;\circled{\scalebox{0.53}{$\shuffle$}}}\;}
\newcommand{\oshh}{\raisebox{1.3pt}{\circled{\scalebox{0.53}{$\shuffle$}}}}
\renewcommand{\=}{\: =\: }
\renewcommand{\Im}{\mathrm{Im}\,}
\newcommand{\HH}{\mathbb{H}}
\newcommand{\qmfpi}{\widetilde{\mathbb{M}}}
\newcommand{\mfpi}{\mathbb{M}}
\newcommand{\cusppi}{\mathbb{S}}
\def\cB{\mathcal B}
\def\cG{\mathcal G}
\def\cM{\mathcal M}
\def\bQ{\mathbb Q}
\def\fH{\mathfrak{H}}
\def\sy{z}
\DeclareMathOperator{\Hom}{Hom}
\begin{document}

\maketitle
\vspace{-0.92cm}
\begin{center}
{\footnotesize (\emph{With an appendix by} NILS MATTHES)}
\end{center}

\begin{abstract} 
 We introduce the algebra of formal multiple Eisenstein series and study its derivations. This algebra is motivated by the classical multiple Eisenstein series,  introduced by Gangl--Kaneko--Zagier as a hybrid of classical Eisenstein series and multiple zeta values. 
In depth one, we obtain formal versions of the Eisenstein series satisfying the same algebraic relations as the classical Eisenstein series. In particular, they generate an algebra whose elements we call formal quasimodular forms.
We show that the algebra of formal multiple Eisenstein series is an $\sltwo$-algebra by formalizing the usual derivations for quasimodular forms and extending them naturally to the whole algebra. 
Additionally, we introduce some families of derivations for general quasi-shuffle algebras, providing a broader context for these derivations.
Further, we prove that a quotient of this algebra is isomorphic to the algebra of formal multiple zeta values. This gives a novel and purely formal approach to classical (quasi)modular forms and builds a new link between (formal) multiple zeta values and modular forms.
\end{abstract}

\section{Introduction}

The purpose of this note is to propose a notion of formal multiple Eisenstein series, and explain their intimate connection to formal multiple zeta values and the $\sltwo$-algebra of formal (quasi)modular forms. Before outlining the surprisingly simple steps to obtain the formal space in Section~\ref{sec:formal}, we first recall the `concrete' objects we aim to `formalize', i.e., the notions of multiple Eisenstein series, multiple zeta values and (quasi)modular forms. 

\subsection{Multiple zeta values}
Multiple zeta values, which are defined for integers $r\geq 1$ and $k_1\geq 2, k_2,\dots,k_r \geq 1$ by
\begin{align}\label{eq:defmzv}
	\zeta(k_1,\ldots,k_r) \df \sum_{m_1>\cdots>m_r>0} \frac{1}{m_1^{k_1}\cdots m_r^{k_r}}
\end{align}
are subject to many relations. Denote the $\Q$-algebra of all multiple zeta values by $\mz$. 
Conjecturally, the \emph{extended double shuffle relations} of multiple zeta values provide all algebraic relations among multiple zeta values~\cite{IKZ}. These relations are obtained (after possible regularization) from the two ways of expressing the product of multiple zeta values---the `usual' (stuffle) product of real numbers, and a (shuffle) product from the iterated integral representation of multiple zeta values---which both can be interpreted as quasi-shuffle products~\cite{H}. 

First, we recall the usual algebraic setup for multiple zeta values, following~\cite{IKZ} and~\cite{H}, and introduce formal multiple zeta values. Consider the alphabet $\LL_z=\{z_k \mid k\geq 1\}$ and let $\h^1 = \QLZ$ be the free algebra over $\LL_z\mspace{1mu}$. Define a product~$\diamond$ on $\Q \LL_z$ by $z_{i} \diamond z_{j} = z_{i+j}$ for all $i,j\geq 1$. The corresponding quasi-shuffle product~\eqref{eq:qshdef} $\ast = \qsh$ is usually called the \emph{stuffle product} and $(\h^1,\ast)$ is a commutative $\Q$-algebra. Let $(\h^0,\ast)$ be the subalgebra of $(\h^1,\ast)$ generated by all words not starting in $z_1\mspace{1mu}$. The linear map defined on generators by 
\vspace{-0.1cm}
\begin{align}\begin{split} \label{eq:mapzeta}
\zeta: \h^0&\longrightarrow \mz\\ 
z_{k_1}\cdots z_{k_r}&\longmapsto \zeta(k_1,\ldots,k_r)
\end{split}
\end{align}
is an algebra homomorphism from $(\h^0,\ast)$ to $\mz$.
This homomorphism can be extended to a homomorphism $\zeta^\ast:\h^1\to\mz[T]$, that is, we obtain elements $\zeta^\ast(k_1,\dots,k_r) \in \R[T]$ for all $k_1,\dots,k_r \geq 1$ called the \emph{stuffle regularized multiple zeta values} (see~\cite{IKZ}). In case $k_1 \geq 2$, these coincide with the multiple zeta values~\eqref{eq:defmzv}. 

The second product is given by the shuffle product for which we consider the alphabet given by the two letters $\LL_{xy}=\{x,y\}$ and write $\h=\Q\langle \LL_{xy} \rangle$. Define the product~$\diamond$ by $a\diamond b=0$ for all $a,b\in\Q \LL_{xy}\mspace{1mu}$. Denote the corresponding quasi-shuffle product~$\qsh$ by $\shuffle$. Through the identification $z_k = x^{k-1}y$ we can view $\h^1$ and $\h^0$ as subalgebras of $(\h,\shuffle)$.   Due to the iterated integral expression of multiple zeta values, one obtains that the map~\eqref{eq:mapzeta} gives an algebra homomorphism from $(\h^0,\shuffle)$ to $\mz$. There is also a unique extension of the map $\zeta$ to an algebra homomorphism $\zeta^\shuffle: (\h^1,\shuffle) \rightarrow \mz[T]$. 

The two regularizations~$\zeta^
\ast$ and~$\zeta^\shuffle$ differ and their difference can be described explicitly (see~\cite[Theorem~1]{IKZ}). The comparison of shuffle- and stuffle-regularized multiple zeta values, leads to the \emph{extended double shuffle relations}. The space of \emph{formal multiple zeta values} is then defined as $\h^1$ modulo the extended double shuffle relations (cf., Definition~\ref{def:regdsh}). The class of a word $z_{k_1}\cdots z_{k_r}$ is denoted by $\fzeta(k_1,\dots,k_r)$. We will give a new definition of formal multiple zeta values (Definition~\ref{def:formalmzv}) and then show that they coincide with the one mentioned here (Theorem~\ref{thm:formalmzvdsh}).

\subsection{Multiple Eisenstein series} Multiple zeta values and (quasi)modular forms are connected in various ways. For example, in the case $r=1$, a multiple zeta value is a Riemann zeta value, which is the constant term of the Eisenstein series. The Eisenstein series of weight $k\geq 2$ is given for $\tau \in \Ha = \{ {\tau \in \C \mid }\operatorname{Im}(\tau) > 0 \}$ by 
\begin{align}\label{eq:eis}
    \mathbb{G}_k(\tau) \df \zeta(k) + \frac{(-2\pi i)^k}{(k-1)!} \sum_{m,n\geq 1} n^{k-1} q^{mn} \qquad \qquad (q=e^{2\pi i \tau}).
\end{align}
For even $k\geq 2$ these series are (quasi)modular forms for the full modular group. In~\cite{GKZ} the authors defined double Eisenstein series, which have double zeta values (\eqref{eq:defmzv} in the case $r=2$) as their constant terms, and which can be seen as a natural depth two version of Eisenstein series. This construction was generalized by the first author in~\cite{Ba0}. 
Given a \emph{depth} $r\geq 1$ and integers $k_1,\dots,k_r \geq 2$ the \emph{multiple Eisenstein series} are defined\footnote{In the case $k_1=2$ one needs to use Eisenstein summation. See~\cite{Ba3} for details.} for $\tau \in \Ha$ by
\begin{align}\label{eq:defmes}
\mathbb{G}_{k_1,\dots,k_r}(\tau) \df \sum_{\substack{\lambda_1 \succ \dots \succ \lambda_r \succ 0\\ \lambda_i \in \Z \tau + \Z}} \frac{1}{\lambda_1^{k_1} \cdots \lambda_r^{k_r}}   \,,
\end{align}
where the order $\succ$ on the lattice $\Z \tau + \Z$ is defined by $m_1 \tau + n_1 \succ m_2 \tau + n_2$ iff $m_1 > m_2$ or $m_1 = m_2 \wedge n_1 > n_2$. The multiple Eisenstein series are holomorphic functions, i.e. elements in~$\mathcal{O}(\Ha)$.
Since $\mathbb{G}_{k_1,\dots,k_r}(\tau + 1) = \mathbb{G}_{k_1,\dots,k_r}(\tau)$, which can be obtained directly by the above definition, the multiple Eisenstein series possess a Fourier expansion 
\begin{align}\label{eq:mesfourier}
    \mathbb{G}_{k_1,\dots,k_r}(\tau) = \zeta(k_1,\dots,k_r) \,+\, \sum_{n\geq 1} a_n \,q^n \qquad (a_n \in \mz[2\pi i]), 
\end{align}
 which was calculated in depth $r=2$ in~\cite{GKZ} and for arbitrary depth by the first author in~\cite{Ba0}. 
 More precisely, it was shown that the multiple Eisenstein series can be written as an explicit $\mz[2\pi i]$-linear combination of the $q$-series
 \begin{align}\label{eq:defqserg}
   g(k_1,\dots,k_r) \df \sum_{\substack{m_1 > \dots > m_r > 0\\ n_1, \dots , n_r > 0}} \frac{n_1^{k_1-1}}{(k_1-1)!} \dots \frac{n_r^{k_r-1}}{(k_r-1)!}  q^{m_1 n_1 + \dots + m_r n_r }.
 \end{align}
 
We denote the $\Q$-vector space spanned by all $\eqref{eq:defmes}$ by 
 \begin{align}\label{eq:defames}
     \ames \df \langle \mathbb{G}_{k_1,\dots,k_r} \mid  r\geq 0, k_1,\dots,k_r \geq 2\rangle_\Q\,,
 \end{align}
where we set $\mathbb{G}_{k_1,\dots,k_r}(\tau)=1$ for $r=0$. With a similar argument as for multiple zeta values, one can see that $\ames$ is an algebra. More precisely, the subspace $\h^2 = \Q\langle z_k \mid k\geq 2\rangle$ of $\h^1$ is closed under the stuffle product~$\ast$, i.e., we get a $\Q$-subalgebra $(\h^2,\ast) \subset (\h^1,\ast)$. The space~$\ames$ can then be seen as the image of the $\Q$-linear map $\mathbb{G}: \h^2 \rightarrow \mathcal{O}(\Ha)$ defined on the generators by $z_{k_1}\cdots z_{k_r} \mapsto \mathbb{G}_{k_1,\dots,k_r}\mspace{1mu}$. This map is an algebra homomorphism (\cite{Ba3}) with respect to the stuffle product~$\ast$. 

The Fourier expansion~\eqref{eq:mesfourier} and the discussion above raise the natural question of whether there exist algebra homomorphisms $\mathbb{G}^\bullet: (\h^1,\bullet) \rightarrow \mathcal{O}(\Ha)$, for $\bullet \in \{\ast, \shuffle\}$, such that $\mathbb{G}^\bullet_{\mid \h^2} = \mathbb{G}$ and such that $\mathbb{G}^\bullet(k_1,\dots,k_r)$ has a Fourier expansion with constant term $\zeta^\bullet(k_1,\dots,k_r)$ for any $k_1,\dots,k_r \geq 1$. The case $\bullet = \shuffle$ was positively answered in~\cite{BT}, where the authors constructed  \emph{shuffle regularized multiple Eisenstein series} $\mathbb{G}^\shuffle$ by relating the Goncharov coproduct to the Fourier expansion of multiple Eisenstein series. Similarly, the case $\bullet = \ast$ was solved in~\cite{Ba} (see also~\cite{Ba3}), where \emph{stuffle regularized multiple Eisenstein series} $\mathbb{G}^\ast$ were constructed, using the deconcatination coproduct for the stuffle algebra. As described at the beginning, for multiple zeta values it is conjectured that all relations can be described by the extended double shuffle relations, which then lead to the notion of formal multiple zeta values, given by symbols modulo these relations. Similarly, one can ask the question of what relations are satisfied by, for example, the stuffle regularized multiple Eisenstein series and how their formal version should be defined. 
In the following, we will explain the origin and motivation of formal multiple Eisenstein series, based on various numerical calculations in recent years. 

In both the construction of~$\mathbb{G}^\shuffle$ and of~$\mathbb{G}^\ast$, generalizations of the $q$-series~$g$ in~\eqref{eq:defqserg} were used. 
More precisely, for $k_1,\dots,k_r\geq 1,d_1,\dots,d_r\geq 0$ define the $q$-series
\begin{align}\label{def:big}
    g\bi{k_1,\dots,k_r}{d_1,\dots,d_r} \df \sum_{\substack{m_1>\dots>m_r>0\\n_1,\dots,n_r>0}} \frac{n_1^{k_1-1} m_1^{d_1}}{(k_1-1)!} \cdots \frac{n_r^{k_r-1} m_r^{d_r}}{(k_r-1)!} q^{m_1 n_1 + \dots + m_r n_r}\,.
\end{align}
The stuffle and shuffle regularized multiple Eisenstein series are $\mathcal{Z}[2\pi i]$-linear combinations of these $q$-series.
The $q$-series~\eqref{def:big} were first defined in~\cite{Ba}, where their algebraic structure was described in detail. In particular, it was shown that their product can be expressed by a quasi-shuffle product quite similar to the stuffle product. For example, we have
\begin{align}\label{eq:bigstuffleexample}
    g\bi{2}{1} g\bi{3}{2} = g\bi{2,3}{1,2} + g\bi{3,2}{2,1} + g\bi{5}{3} - \frac{1}{12}g\bi{3}{3}.
\end{align}
If we call $k_1+\dots+k_r+d_1+\dots+d_r$ the \emph{weight} of~\eqref{def:big}, then this product can be seen as a double indexed version of the usual stuffle product (where we add componentwise to obtain the `stuffle'-terms), modulo the lower weight term $g\bi{3}{3}$.
The coefficient of $q^N$ in these $q$-series can be related to sums over certain partitions of $N$ (see \cite{B4, BI}). Using the conjugation of partitions then naturally leads to a family of relations among these $q$-series, which we call \emph{swap invariance}. For example, in the case $r=1$ this just means $g\bi{k}{d} = \frac{d!}{(k-1)!} g\bi{d+1}{k-1}$, which can also be obtained directly from the definition. 
Based on numerical calculations (\cite{Ba2}), the first author conjectured that all relations among the $q$-series~\eqref{def:big} are a consequence of this swap invariance and the quasi-shuffle product. Moreover, it was observed that, conjecturally, every series~\eqref{def:big} can be written as a $\Q$-linear combination of the single indexed version $g(k_1,\dots,k_r)$ in~\eqref{eq:defqserg}. Finally, numerical calculations further suggest that multiple Eisenstein series $\mathbb{G}_{k_1,\dots,k_r}$ satisfy the same $\Q$-linear relations as $g(k_1,\dots,k_r)$ modulo lower weight terms. The quasi-shuffle product for the double indexed $g$ modulo lower weight terms can be seen as a double indexed version of the stuffle product as we can see in~\eqref{eq:bigstuffleexample}.

All these observations then lead to the notion of formal multiple Eisenstein series, which should be understood as formal objects satisfying the same relations as multiple Eisenstein series. 

\subsection{Formal multiple Eisenstein series}\label{sec:formal}
Motivated by the observations mentioned above, the algebra of formal multiple Eisenstein series $\fmes$ is defined (Definition~\ref{def:fmes}) to be the $\Q$-vector space spanned by symbols $\gf$, which are swap invariant and whose product is given by (a double indexed version of) the stuffle product. More precisely, for $r\geq 1, k_1,\dots,k_r\geq 1,d_1,\dots,d_r \geq 0$ we introduce formal variables $\gf\bi{k_1,\dots,k_r}{d_1,\dots,d_r}$ and impose the relations (swap invariance)
\begin{align}\label{eq:swapinvarianceintro}
		\genG_r \bi{X_1,\dots,X_r}{Y_1,\dots,Y_r}  = \genG_r \bi{Y_1 + \dots + Y_r,\dots,Y_1+Y_2,Y_1}{X_r,X_{r-1}-X_r,\dots,X_1-X_2}\,\qquad (\text{for all } r\geq 1),
\end{align}
where $\genG_r$ denotes the generating series 
\begin{align*}
\genG_r \bi{X_1,\dots,X_r}{Y_1,\dots,Y_r} := \sum_{\substack{k_1,\dots,k_r\geq 1\\d_1,\dots,d_r \geq 0}} \gf\bi{k_1,\dots,k_r}{d_1,\dots,d_r} X_1^{k_1-1} \cdots X_r^{k_r-1} \frac{Y_1^{d_1}}{d_1!} \cdots \frac{Y_r^{d_r}}{d_r!}\,.
\end{align*}
We then define the \emph{algebra of formal multiple Eisenstein series}~$\fmes$ as the $\Q$-vector space spanned by these formal symbols $\gf$ equipped with the \emph{stuffle product} (Definition~\ref{def:stuffleproduct}), e.g.,
{\footnotesize
\begin{align*}
	\gf\bi{k_1}{d_1}  \gf\bi{k_2}{d_2} &= \gf\bi{k_1,k_2}{d_1,d_2}+ \gf\bi{k_2,k_1}{d_2,d_1}+	\gf\bi{k_1+k_2}{d_1+d_2}\,,\\
	\gf\bi{k_1}{d_1}  \gf\bi{k_2,k_3}{d_2,d_3} &= \gf\bi{k_1,k_2,k_3}{d_1,d_2,d_3}+ \gf\bi{k_2,k_1,k_3}{d_2,d_1,d_3}+ \gf\bi{k_2,k_3,k_1}{d_2,d_3,d_1}+	\gf\bi{k_1+k_2,k_3}{d_1+d_2,d_3}+ \gf\bi{k_1, k_2+k_3}{d_1,d_2+d_3}\,.	
\end{align*}
}For example, in the case $r=1$ they satisfy, similarly to $g$, the relation $\gf\bi{k}{d} = \frac{d!}{(k-1)!} \gf\bi{d+1}{k-1}$. The use of double indices is necessary to make sense of the swap invariance~\eqref{eq:swapinvarianceintro}. But often we will be interested in the case when $d_1=\dots=d_r=0$ and set
\begin{align}\label{eq:singleGintro}
    \gf(k_1,\dots,k_r) \df \gf \bi{k_1,\dots,k_r}{0,\dots,0}\,.
\end{align}
Then the discussion above suggests that the symbols $\gf(k_1,\dots,k_r)$ should satisfy the same relations as the stuffle regularized multiple Eisenstein series\footnote{It should be noted the stuffle regularized multiple Eisenstein series are not defined uniquely. So, more precisely, the claim is that there is a choice in the stuffle regularization of the multiple Eisenstein series so that they satisfy exactly the same relations as their formal counterparts.}. In fact, we will see that the $\gf(k)$ satisfy exactly the algebraic relations as the classical Eisenstein series (Theorem~\ref{thm:relpevevk}), which is quite surprising considering the simple family of relations that we impose on $\gf$. Even though it seems like the single indexed $\gf(k_1,\dots,k_r)$ in~\eqref{eq:singleGintro} just give a small portion of the whole space~$\fmes$, we conjecture (Conjecture~\ref{conj:BBvI}) it is already spanned by them. This justifies calling $\fmes$ the algebra of formal multiple Eisenstein series. 

By the Fourier expansion~\eqref{eq:mesfourier} of multiple Eisenstein series we see that projecting onto the constant term gives a surjective algebra homomorphism from the space of multiple Eisenstein series to multiple zeta values. This raises the natural question of whether there is a formal analogue of this projection onto the algebra of formal multiple zeta values $\fmz$. We give a positive answer to this question together with an explicit description of the kernel of this `formal projection onto the constant term' as follows.
\begin{theorem}\label{thm:main1} There exists a surjective algebra homomorphism 
\begin{align*}
    \pi: \fmes \rightarrow \fmz,
\end{align*}
with $\pi(\gf(k_1,\dots,k_r))=\zeta^f(k_1,\dots,k_r)$. The kernel of $\pi$ is the ideal generated by all formal multiple Eisenstein series which are \emph{not} of the form
\begin{align*} 
    \gf\bi{1,\dots,1,k_1,\dots,k_r}{d_1,\dots,d_s,0,\dots,0},
\end{align*}
for some $r,s\geq 0$ and $k_1,\dots,k_r\geq 1, d_1,\dots,d_s \geq 0$. 
\end{theorem}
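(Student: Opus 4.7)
The plan is to construct $\pi$ by identifying $\fmz$ with a quotient of $\fmes$. Let $I \subset \fmes$ denote the ideal generated by all non-admissible generators, i.e., those $\gf\bi{k_1,\ldots,k_n}{d_1,\ldots,d_n}$ for which there exist positions $i < j$ with $k_i \geq 2$ and $d_j > 0$. The theorem then reduces to proving $\fmes / I \cong \fmz$ as $\Q$-algebras via the map $\gf(k_1,\ldots,k_r) \mapsto \fzeta(k_1,\ldots,k_r)$.

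First I would define the candidate map on generators: send pure admissibles $\gf(k_1,\ldots,k_r)$ to $\fzeta(k_1,\ldots,k_r)$, send non-admissibles to zero, and for mixed admissibles $\gf\bi{1,\ldots,1,k_1,\ldots,k_r}{d_1,\ldots,d_s,0,\ldots,0}$ with $s \geq 1$ use the generating-series form of swap invariance~\eqref{eq:swapinvarianceintro} to express them modulo $I$ as $\Q$-linear combinations of pure admissibles, then apply $\fzeta$. Extending $\Q$-linearly, I must verify this yields a well-defined algebra homomorphism. This splits into two steps: \emph{stuffle compatibility}---a combinatorial check that for any two pure admissibles, every non-admissible term in their stuffle product lies in $I$, while the remaining admissible terms match the stuffle product $\fzeta(k_1,\ldots,k_r) \cdot \fzeta(l_1,\ldots,l_s)$ in $\fmz$; and \emph{swap compatibility}---that the projection of the swap relation~\eqref{eq:swapinvarianceintro} modulo $I$ yields only identities among $\fzeta$'s that hold in $\fmz$, i.e., are consequences of the extended double shuffle relations (Theorem~\ref{thm:formalmzvdsh}).

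Surjectivity is immediate from $\pi(\gf(k_1,\ldots,k_r)) = \fzeta(k_1,\ldots,k_r)$. The inclusion $I \subseteq \ker \pi$ is by construction. For the reverse inclusion, I would construct an explicit inverse $\fmz \to \fmes / I$ sending $\fzeta(k_1,\ldots,k_r) \mapsto \gf(k_1,\ldots,k_r) + I$ and verify that each extended double shuffle relation in $\fmz$ lifts to a relation in $\fmes / I$---this is the dual of the swap-compatibility check and is the main obstacle. The guiding idea is that setting $Y_i = 0$ in~\eqref{eq:swapinvarianceintro} recovers (after projecting) the stuffle structure on the $\fzeta$'s, while setting $X_i = 0$ converts the generators $\gf\bi{1,\ldots,1}{d_1,\ldots,d_r}$ into polynomial combinations of $\gf(l_1,\ldots,l_r)$'s whose images under $\fzeta$ realize the shuffle-regularized multiple zeta values. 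Thus the swap invariance, restricted to the admissible part modulo $I$, simultaneously encodes both the stuffle and shuffle regularizations, and establishing this correspondence via the generating-series formalism is the technical heart of the proof.
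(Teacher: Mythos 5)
Your overall strategy is reasonable in spirit, but it has a genuine gap at exactly the point you yourself flag as ``the technical heart'': you assert, but do not prove, that swap invariance together with the stuffle product, taken modulo the ideal $I$, yields precisely the extended double shuffle relations among the classes of the $\gf(k_1,\dots,k_r)$. This is the entire mathematical content of the theorem, and the intuition you offer about the $Y_i=0$ and $X_i=0$ specializations of~\eqref{eq:swapinvarianceintro} is not yet an argument. In particular, the comparison between stuffle- and shuffle-regularized objects involves a nontrivial correction factor (the series $\exp\bigl(\sum_{n\geq 2}\tfrac{(-1)^n}{n}\varphi(z_n)\cdot z_1\bigr)$ appearing in Definition~\ref{def:regdsh}); your sketch makes no mention of it, and without it the claimed identification of the $X_i=0$ specialization with the shuffle regularization fails. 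The paper's route is structurally different: $\fmz$ is \emph{defined} as the quotient $\faktor{\fmes}{\mathfrak{N}}$ (Definition~\ref{def:algebraformalmzv}), so surjectivity and the kernel description in Theorem~\ref{thm:main1} hold by construction (together with Proposition~\ref{prop:profsurj}), and the real work is isolated in Theorem~\ref{thm:formalmzvdsh}, whose proof relies on the appendix: one packages the images $\pi\gf\bi{k_1,\dots,k_r}{d_1,\dots,d_r}$ into a swap-invariant group-like element $F$ of $\cB(\fmz)$, factors it as $F=G\odot H$ with $G\in\cM_Y(\fmz)$ and $H\in\cM_X(\fmz)$ (Proposition~\ref{prop:bijectionv1}), and then shows (Proposition~\ref{prop:appendix}) that group-likeness of $F$ is equivalent to $H$ satisfying the stuffle relations and $\sigma(\lambda(F_{\rm corr})\odot c_X(F))$ satisfying the corrected, regularized shuffle relations. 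Some version of this factorization argument, or an equivalent explicit computation, is indispensable and absent from your proposal.

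Two smaller points. First, your description of the generators of $I$ (``positions $i<j$ with $k_i\geq 2$ and $d_j>0$'') should read $i\leq j$: the single letter $\ai{2}{1}$ is already a generator of $\mathfrak{N}$, since it lies in neither $\A^1$ nor $\A_0$. Second, for your inverse map $\fmz\to\faktor{\fmes}{I}$ to give $\ker\pi\subseteq I$ you also need to know that the classes of the $\gf(k_1,\dots,k_r)$ span $\faktor{\fmes}{I}$; this is Proposition~\ref{prop:profsurj} in the paper and requires its own argument (a stuffle-regularization computation plus the observation that $\sigma(\Q\langle\A^1\rangle)=\Q\langle\A_0\rangle$), which you do not supply.
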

In fact, we will define the algebra of formal multiple zeta values as the quotient by the above ideal (Definition~\ref{def:algebraformalmzv}) and then show that this algebra is isomorphic to the classical definition of formal multiple zeta values (Theorem~\ref{thm:formalmzvdsh}). Theorem~\ref{thm:main1}  offers a new perspective on the extended double shuffle relations. Namely, it states that these relations are equivalent to the combination of swap invariance, the stuffle product, and the relations derived from dividing out the mentioned ideal.

\subsection{Derivations and \texorpdfstring{$\sltwo$}{sl2}-algebras}
The second and main part of this work focuses on the derivations of the algebra~$\fmes$. This is motivated by the derivations for classical (quasi)modular forms, which we recall now.  For this, we first introduce another normalization of the Eisenstein series and define 
\begin{equation}\label{eq:Gk}
G_k \df (-2\pi i)^{-k} \mathbb{G}_k = -\frac{B_k}{2k!} + \frac{1}{(k-1)!}\sum_{m, n \geq 1} m^{k-1} \, q^{m n}, \qquad (B_k = k\text{th Bernoulli number}).
\end{equation}
As is well-known, for even $k$ the relations
\begin{alignat}{2} 
\begin{split}\label{eq:classicalgkrelations}
   \frac{k+1}{2}G_{k}  &\= \frac{1}{k-2}\, q \frac{d}{dq}G_{k-2} \,+ \!\!\sum_{\substack{k_1+k_2=k \\ k_1, k_2 \geq 2 \text{ even} }}\!\! G_{k_1} G_{k_2}\,, \qquad (k\geq 4)\\
        \frac{(k+1)(k-1)(k-6)}{12}\, G_k &\= \!\!  \sum_{\substack{k_1+k_2 = k\\k_1, k_2\geq 4 \text{ even}}}\!\! (k_1-1)(k_2-1)   \, G_{k_1} G_{k_2}\,, \qquad (k\geq 6)
\end{split}
\end{alignat}
suffice to write every Eisenstein series $G_k$ with $k\geq 4$ as a polynomial in $G_4$ and $G_6\mspace{1mu}$. Moreover, the relations~\eqref{eq:classicalgkrelations} imply that the space $\qmf = \Q[G_2,G_4,G_6]$ of quasimodular forms (with rational coefficients) is closed under the operator $q \frac{d}{dq}$. Even more, the algebra~$\qmf$ is an $\sltwo$-algebra (see~\cite{Zag08} or Section~\ref{sec:sl2algebras}). In general, an \emph{$\sltwo$-algebra} is an algebra together with three derivations $W,D,\delta$ acting on this algebra and satisfying the following commutator relations
\begin{align*}
    [W,D]=2D, \quad [W,\deer]=-2\deer,\quad [\deer,D]=W.
\end{align*}
In the case of quasimodular forms, these derivations are given by $D=q \frac{d}{dq}$ and the other two derivations are determined by $\delta G_2 = -\frac{1}{2}$, $\delta G_4 = \delta G_6 = 0$ and $W G_k = k G_k\mspace{1mu}$. As one of the main results of this work (Theorem~\ref{thm:fmesissl2algebra}), we will show that the algebra~$\fmes$ is an $\sltwo$-algebra which can be seen as a natural generalization of the $\sltwo$-algebra of quasimodular forms:

\begin{theorem} There exist derivations $W,D,\delta$ on $\fmes$ such that
\begin{enumerate}[{\upshape(i)}]
    \item $\fmes$ is an $\sltwo$-algebra\emph{;}
    \item the subalgebra $\fqmf = \Q[\gf(2),\gf(4),\gf(6)] \subset \fmes$ is isomorphic to $\qmf$ as an $\sltwo$-algebra.
\end{enumerate}
\end{theorem}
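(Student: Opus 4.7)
The plan is to construct three explicit derivations $W$, $D$, $\delta$ on $\fmes$ that formalize the classical weight operator, $q\frac{d}{dq}$, and the $\sltwo$-lowering operator on $\qmf$, verify the $\sltwo$-commutation relations, and then restrict to $\fqmf$ to obtain the stated isomorphism.

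First I would define $W$ on generators as the weight operator $W\gf\bi{k_1,\ldots,k_r}{d_1,\ldots,d_r} = \bigl(\sum_{i=1}^r(k_i+d_i)\bigr)\,\gf\bi{k_1,\ldots,k_r}{d_1,\ldots,d_r}$. Since the stuffle product is weight-preserving (as is visible from the explicit product formulas for $\gf\bi{k_1}{d_1}\gf\bi{k_2}{d_2}$ and $\gf\bi{k_1}{d_1}\gf\bi{k_2,k_3}{d_2,d_3}$ displayed in the excerpt) and the swap relation~\eqref{eq:swapinvarianceintro} preserves the total degree in the generating-series variables, the operator $W$ descends to a well-defined derivation of $\fmes$. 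Next, motivated by the classical identity $q\frac{d}{dq}\,g\bi{k}{d}=k\,g\bi{k+1}{d+1}$, I would define $D$ on generators by $D\,\gf\bi{\mathbf k}{\mathbf d} = \sum_{i=1}^r k_i\,\gf\bi{\ldots,k_i+1,\ldots}{\ldots,d_i+1,\ldots}$, which on the generating series $\genG_r$ reads $D=\sum_{i}\partial_{X_i}\partial_{Y_i}$. In this form, compatibility with swap invariance reduces to a chain-rule calculation under the unimodular change of coordinates appearing in~\eqref{eq:swapinvarianceintro}, and the fact that $D$ is a stuffle-derivation fits naturally into the general framework of quasi-shuffle derivations introduced elsewhere in the paper. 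The lowering operator $\delta$ is defined analogously by an explicit lowering formula on generators, chosen (guided by the desired commutator $[\delta,D]=W$) so that on the generating series it acts as the swap-dual of $D$ together with any boundary contributions needed to make the operator descend through swap invariance.

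The $\sltwo$-commutation relations $[W,D]=2D$ and $[W,\delta]=-2\delta$ are then immediate, since $D$ and $\delta$ shift the weight grading by $+2$ and $-2$ respectively. The key relation $[\delta,D]=W$ is the main obstacle. I would verify it at the level of the generating series: expanding $\delta D-D\delta$ produces a telescoping cancellation, leaving precisely the Euler operator $\sum_i(X_i\partial_{X_i}+Y_i\partial_{Y_i})$ acting on $\genG_r$, which by construction recovers $W$ on generators. The delicate point here is the simultaneous control of swap invariance for $\delta$ and of the boundary terms arising from indices $k_i=1$ (which are identified under swap with indices having $d_i=0$); handling these boundary terms consistently is the most technical part of the argument.

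For part (ii), observe that $\qmf=\Q[G_2,G_4,G_6]$ is a free polynomial algebra. By Theorem~\ref{thm:relpevevk}, the elements $\gf(2),\gf(4),\gf(6)$ satisfy exactly the relations of the classical Eisenstein series $G_2,G_4,G_6$, hence they are algebraically independent and the map $\gf(2k)\mapsto G_{2k}$ for $k\in\{1,2,3\}$ extends to an algebra isomorphism $\fqmf\xrightarrow{\sim}\qmf$. To promote this to an isomorphism of $\sltwo$-algebras it suffices to check that $W$, $D$, $\delta$ preserve $\fqmf$ and reduce on the three generators to the classical derivations: the action of $W$ is visibly the weight operator; the values of $D\gf(2k)$ and $\delta\gf(2k)$ can be computed from their definitions on generators, matching the classical formulas~\eqref{eq:classicalgkrelations}, which by Theorem~\ref{thm:relpevevk} also hold formally in $\fqmf$. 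This closure and matching of generators completes the $\sltwo$-algebra isomorphism.
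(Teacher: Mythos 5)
The central gap is that you never actually construct $\delta$, and its construction is the main content of the theorem. You describe it as ``the swap-dual of $D$ together with any boundary contributions needed,'' but the naive lowering operator $\ai{k}{d}\mapsto d\,\ai{k-1}{d-1}$ does not even map $\QA$ to itself (the letter $\ai{0}{d-1}$ does not exist), as the paper points out in Example~\ref{ex:atildesl2act}; it must be ``corrected,'' and the correction is not a routine boundary adjustment. The operator the paper uses (proof of Proposition~\ref{prop:deer}) is a sum $\deer^1-\tfrac12(\deer^2+\deer^3+\deer^4+\deer^5)$ of five different derivations, each justified by a separate general lemma on quasi-shuffle derivations, and its discovery required extensive numerical experimentation. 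Three independent constraints must be met simultaneously --- $\delta$ must be a derivation for the stuffle product, $\sigma$-equivariant, and satisfy $[\delta,D]=W$ --- and there is no a priori reason such an operator exists (the paper remarks that, for instance, no nontrivial $\sigma$-equivariant derivation of weight $1$ appears to exist). Asserting that ``expanding $\delta D-D\delta$ produces a telescoping cancellation'' is not checkable without a formula for $\delta$, so part (i) is unproven as written.

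There is also a smaller but genuine flaw in part (ii): you claim that Theorem~\ref{thm:relpevevk} shows $\gf(2),\gf(4),\gf(6)$ satisfy \emph{exactly} the relations of $G_2,G_4,G_6$ and are therefore algebraically independent. That theorem only produces relations; it cannot rule out further ones, so it cannot establish algebraic independence. The paper instead invokes the realization $\varphi_G:\fmes\rightarrow\Q\llbracket q\rrbracket$ of Theorem~\ref{thm:cmes}, which sends $\gf(k)\mapsto G(k)$, and then uses the classical algebraic independence of $G(2),G(4),G(6)$ in $\Q\llbracket q\rrbracket$ to conclude that $\Q[\gf(2),\gf(4),\gf(6)]$ is free. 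Your remaining steps for (ii) --- checking that $W,D,\delta$ preserve $\fqmf$ and act on the generators as in the classical case, via the Ramanujan identities of Corollary~\ref{cor:mfprod} --- are in line with the paper, but they too presuppose the explicit $\delta$ you have not supplied.
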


We refer to $\fqmf$ as the algebra of \emph{formal quasimodular forms}. The derivations $W,D,\delta$ will be given explicitly. For example, the derivations~$W$ and $D$ on~$\fmes$ are given as the $\Q$-linear maps defined on the generators by 
\begin{align}\label{eq:defD}
    \D \gf \bi{k_1,\dots,k_r}{d_1,\dots,d_r}   &\df \sum_{j=1}^r k_j \,    	\gf \bi{k_1,\dots,k_j+1,\dots,k_r}{d_1,\dots,d_j+1,\dots,d_r},\\
    \W \gf \bi{k_1,\dots,k_r}{d_1,\dots,d_r}   &\df (k_1+\dots+k_r+d_1+\dots+d_r) \bi{k_1,\dots,k_r}{d_1,\dots,d_r}.
\end{align}
On the single indexed $\gf$ we will give an algebraic interpretation of the action of $\D$ in terms of double shuffle relations (see~\eqref{eq:Donfil0}), which, for example, gives $\D \gf(1) = \gf(3) - \gf(2,1)$. Applying the projection $\pi$ to this, together with the fact that $\D \fmes \in \ker \pi$, yields $\fzeta(3)=\fzeta(2,1)$.

The derivation~$\delta$ will be much more involved (see its explicit formula in the proof of Proposition~\ref{prop:deer}) and its discovery is based on extensive numerical experiments done by the authors. To describe it we will introduce some general derivations for quasi-shuffle algebras in Section~\ref{sec:derivationsforqsh} and then write $\delta$ as a sum of five of these derivations. As explained in Remark~\ref{rem:uniquedwdelta}, it seems that all three derivations on~$\fmes$ are unique with the property of the above theorem. The $\sltwo$-algebra structure of $\fmes$ gives a natural definition of the space of \emph{formal modular forms} $\fmf \df \ker \delta_{\mid \fqmf}$ and a notion of Rankin--Cohen brackets. The projection $\pi$ in Theorem~\ref{thm:main1} then also naturally leads to the space of \emph{formal cusp forms} $\fs \df \ker \pi_{\mid \fmf}\mspace{1mu}$. We show in Section~\ref{sec:formalquasimodularforms} that these spaces are isomorphic to their classical counterparts~$\mf$ and~$\mathcal{S}$.

\subsection{Overview} We end the Introduction by giving an overview of some of the spaces considered in this work. Summarizing the discussion above, we have the following \emph{`formal picture'}:
\vspace{-0.5cm}
\begin{figure}[H]
	\centering
\begin{tikzpicture}[>=stealth, baseline=(current bounding box.center)]

\node (zero) at (-4,2.5) {$0$};
\node (zeta) at (0,2.5) {$\Q[\fzeta(2)]$};
\node (mzv) at (2,2.5) {$\fmz$};

\node (S) at (-4,4) {$\fs$};
\node (mf) at (-2,4) {$\fmf$};
\node (qmf) at (0,4) {$\fqmf$};
\node (E) at (2,4) {$\fames$};
\node[gray] at (2.3,4.5) {?};
\node (Fil) at (4,4) {$\fil{\lwt}{0}{\fmes}$};
\node (fmes) at (6,4) {$\fmes$};
\node[gray] at (5.2,4.65) {{\footnotesize ?}};
\node[gray] at (5.2,4.3) {$\cong$};

\path[->] (qmf) edge [in=70, out=110, looseness=2, loop above] node {$\sltwo$} ();
\path[->,dashed,gray] (E) edge [in=70, out=110, looseness=2, loop above] node {$\sltwo$} ();
\path[->] (Fil) edge [in=70, out=110, looseness=2, loop above] node {$\sltwo$} ();
\path[->] (fmes) edge [in=70, out=110, looseness=2, loop above] node {$\sltwo$} ();

\draw[right hook->] (S) -- (mf);
\draw[right hook->] (mf) -- (qmf);
\draw[right hook->] (qmf) -- (E);
\draw[right hook->] (E) -- (Fil);
\draw[right hook->] (Fil) -- (fmes);

\draw[right hook->] (zero) -- (zeta); 
\draw[right hook->] (zeta) -- (mzv); 

\draw[->] (S) -- (zero) node[midway, left] {$\pi$}; 
\draw[->] (qmf) -- (zeta); 
\draw[->] (E) -- (mzv); 
\draw[->] (Fil) -- (mzv); 
\draw[->] (fmes) -- (mzv); 

\end{tikzpicture}\hspace{0.89cm}
\end{figure}
\noindent Here, $\fil{\lwt}{0}{\fmes}$ is the subalgebra of $\fmes$ spanned by the single indexed $G(k_1,\dots,k_r)$ in~\eqref{eq:singleGintro} and $\fames$ is the subalgebra of~$\fmes$ spanned by all the symbols with $k_1,\dots,k_r \geq 2$. The algebra $\fames$ can be seen as the formal version of the algebra~$\ames$ spanned by the multiple Eisenstein series~\eqref{eq:defmes}. We conjecture that this space is also an $\sltwo$-subalgebra of $\fmes$ isomorphic to $\ames$ (Conjecture~\ref{conj:espacesl2}).

Multiple Eisenstein series were the original motivation for the project, and they are the centerpiece of the \emph{`classical picture'}:

\vspace{-0.4cm}
\begin{figure}[H]
	\centering
	\begin{tikzpicture}[>=stealth, baseline=(current bounding box.center)]
\node (zero) at (-4,2.5) {$0$};
\node (zeta) at (0,2.5) {$\Q[\zeta(2)]$};
\node (mzv) at (2,2.5) {$\mz$};

\node (S) at (-4,4) {$\mathbb{S}$};
\node (mf) at (-2,4) {$\mathbb{M}$};
\node (qmf) at (0,4) {$\widetilde{\mathbb{M}}$};
\node (E) at (2,4) {$\ames$};
\node[gray] (Fil) at (4,4) {$\ames^{\rm{reg}}$};
\node[gray] (fmes) at (6,4) {$?$};
 \node at (7,4) {$\subset \mathcal{O}(\Ha)$}; 
\node[gray] at (2.3,4.5) {?};
\node[gray] at (4.3,4.5) {?};

\path[->] (qmf) edge [in=70, out=110, looseness=2, loop above] node {$\sltwo$} ();
\path[->,dashed,gray] (E) edge [in=70, out=110, looseness=2, loop above] node {$\sltwo$} ();
\path[->,dashed,gray] (Fil) edge [in=70, out=110, looseness=2, loop above] node {$\sltwo$} ();
\draw[right hook->] (S) -- (mf);
\draw[right hook->] (mf) -- (qmf);
\draw[right hook->] (qmf) -- (E);
\draw[right hook->,gray] (E) -- (Fil);
\draw[right hook->, dashed, gray] (Fil) -- (fmes);

\draw[right hook->] (zero) -- (zeta); 
\draw[right hook->] (zeta) -- (mzv); 

\draw[->] (S) -- (zero)  node[midway, left, align=center] {\footnotesize const.\ \\[-3pt]\footnotesize term\ };
\draw[->] (qmf) -- (zeta); 
\draw[->] (E) -- (mzv); 
\draw[->,gray] (Fil) -- (mzv); 
\draw[->,dashed,gray] (fmes) -- (mzv); 
\end{tikzpicture}
\end{figure}
\noindent Even though this classical picture motivated the formal one, not all counterparts of the formal picture are clear classically. Here, $\qmfpi = \Q[\mathbb{G}_2,\mathbb{G}_4,\mathbb{G}_6]$ (with $\mfpi$ and $\cusppi$ the subspace of $\qmfpi$ of modular and cusp forms respectively) and $\ames^{\rm{reg}}$ denotes a space of regularized multiple Eisenstein series (e.g., one of the regularizations mentioned at the beginning). This algebra~$\ames^{\rm{reg}}$ should give the correct counterpart for $\fil{\lwt}{0}{\fmes}$, but it is difficult to show that there exists a well-defined homomorphism as no counterpart for $\fmes$ is known. 

In~\cite{BB} the authors were able to give a `rational analogue' of the space~$\fmes$. For this, denote by $\mz_q \subset \Q\llbracket q\rrbracket$ the space spanned by the $q$-series~\eqref{def:big}, which appeared in the regularization of multiple Eisenstein series. Then it was shown in~\cite{BB} that there exists a surjective algebra homomorphism $G: \fmes \rightarrow \mz_q$ (see Theorem~\ref{thm:cmes}), which conjecturally is also injective. We obtain the \emph{`rational picture'}:
\vspace{-0.4cm}
\begin{figure}[H]
	\centering
	\begin{tikzpicture}[>=stealth, baseline=(current bounding box.center)]
\node (zero) at (-4,2.5) {$0$};
\node (zeta) at (0,2.5) {$\Q$};
\node (mzv) at (2,2.5) {$\Q$};

\node (S) at (-4,4) {$\mathcal{S}$};
\node (mf) at (-2,4) {$\mf$};
\node (qmf) at (0,4) {$\qmf$};
\node (E) at (2,4) {$\ames_\Q$};
\node[gray] at (2.3,4.58) {?};
\node (Fil) at (4,4) {$\mz^{\circ}_q$};
\node[gray] at (4.32,4.58) {?};
\node (fmes) at (6,4) {$\mz_q$}; %
\node[gray] at (6.32,4.58) {?};
\node at (7,4) {$\subset \Q\llbracket q\rrbracket$};
\node[gray] at (5.1,4.65) {{\footnotesize ?}};
\node[gray] at (5.1,4.3) {$\cong$};

\path[->] (qmf) edge [in=70, out=110, looseness=2, loop above] node {$\sltwo$} ();
\path[->,dashed,gray] (E) edge [in=70, out=110, looseness=2, loop above] node {$\sltwo$} ();
\path[->,dashed,gray] (Fil) edge [in=70, out=110, looseness=2, loop above] node {$\sltwo$} ();
\path[->,dashed,gray] (fmes) edge [in=70, out=110, looseness=2, loop above] node {$\sltwo$} ();

\draw[right hook->] (S) -- (mf);
\draw[right hook->] (mf) -- (qmf);
\draw[right hook->] (qmf) -- (E);
\draw[right hook->] (E) -- (Fil);
\draw[right hook->] (Fil) -- (fmes);

\draw[right hook->] (zero) -- (zeta); 
\draw[right hook->] (zeta) -- (mzv); 

\draw[->] (S) -- (zero) node[midway, left, align=center] {\footnotesize const.\ \\[-3pt]\footnotesize term\ };
\draw[->] (qmf) -- (zeta); 
\draw[->] (E) -- (mzv); 
\draw[->] (Fil) -- (mzv); 
\draw[->] (fmes) -- (mzv); 
\end{tikzpicture}
\end{figure}
\noindent Here $\mz^\circ_q$ (resp., $\ames_\Q$) denotes the image of $\fil{\lwt}{0}{\fmes}$ (resp. $\fames$) under $G$. Conjecturally, all these spaces are isomorphic to their formal analogues and, therefore, are $\sltwo$-algebras, where the derivation $D$ is given by $q \frac{d}{dq}$. The open problem is to show that the derivation $\delta$ in $\fmes$ also gives a well-defined map on these spaces. The elements in the space~$\mz_q$ can be viewed as \emph{$q$-analogues of multiple zeta values} (see \cite{BK, BI}), since the limit $q\rightarrow 1$ yields multiple zeta values. \\

Since the start of this project in late 2019, and their original definition at that time, formal multiple Eisenstein series were studied in the case $r=2$ in~\cite{BKM} and some of the results proven here were announced in~\cite{B4}. Later, in~\cite{BB} the authors introduced a realization of the algebra~$\fmes$ in the space~$\Q\llbracket q\rrbracket$ by constructing combinatorial multiple Eisenstein series, which give the rational picture mentioned above. In her thesis~\cite{Bu1} (see also \cite{Bu2, Bu3}), Burmester then reinterpreted the algebra~$\fmes$ by introducing a new algebraic setup, called the balanced setup, which gives a different description of the stuffle product and the swap invariance. She also gave a new independent proof of Theorem~\ref{thm:main1} using the balanced setup (see \cite[Theorem~7.4]{Bu3}). We recall her algebraic setup in Section~\ref{sec:balanced}.

\subsection*{Acknowledgements}\mbox{}\\
The authors thank Annika Burmester, Masanobu Kaneko, Ulf Kühn and Don Zagier for fruitful discussions. 
This project was partially supported by JSPS KAKENHI Grants 19K14499, 21K13771 and 23K03030. The first author would like to thank the University of Cologne for the support of a research stay in which the authors worked on this project.
The second author was supported by the SFB/TRR 191 “Symplectic Structure in Geometry, Algebra and Dynamics”, funded by the DFG (Projektnr. 281071066 TRR 191). Part of the work was carried out during two visits of the second author to Nagoya, and he thanks Nagoya University for its hospitality and support. He also thanks Utrecht University, where he worked when the authors initiated this work. 

\section{Formal multiple Eisenstein series}
Define the set $\A$, whose elements we call \emph{letters}, by
\begin{align*}
\A = \left\{ \ai{k}{d} \mid k \geq 1,\,d \geq 0 \right\}.
\end{align*}
We are interested in $\Q$-linear combinations of words in the letters of $\A$, i.e., in elements of  $\QA$. Here and in the following, we call the monic monomials in $\QA$ \emph{words}.  
For $k_1,\dots,k_r\geq 1$ and $d_1,\dots,d_r\geq 0$, we will use the following notation to write words in $\QA$:
\begin{align*}
    \ai{k_1,\dots,k_r}{d_1,\dots,d_r} :=   	\ai{k_1}{d_1}\dots \ai{k_1}{d_1}\,,
\end{align*}
where the product on the right is the usual non-commutative product in $\QA$. The space~$\h^1$ defined in the introduction can be viewed naturally as a subspace of $\QA$ via the inclusion 
\begin{align}\begin{split}
\label{eq:identh1inqa}
\h^1 &\longrightarrow \QA\\
    z_{k_1}\cdots z_{k_r} &\longmapsto     \ai{k_1,\dots,k_r}{0,\dots,0}.
    \end{split}
\end{align}
We will extend the stuffle product defined on $\h^1$ to $\QA$ in the following way.

\begin{definition}\label{def:stuffleproduct}
    Define the \emph{stuffle product}  $\ast$ on $\QA$ as the $\Q$-bilinear product, which satisfies $1 \ast w = w \ast 1 = w$ for any word $w\in \QA$ and
    \begin{align*}
        \ai{k_1}{d_1} w \ast \ai{k_2}{d_2} v = \ai{k_1}{d_1} \left(w \ast \ai{k_2}{d_2} v\right) + \ai{k_2}{d_2} \left( \ai{k_1}{d_1} w \ast v \right) + \ai{k_1+k_2}{d_1+d_2} (w \ast  v) 
    \end{align*}
    for any letters $\ai{k_1}{d_1},\ai{k_2}{d_2} \in \A$ and words $w, v \in \QA$. 
\end{definition}
This turns $\QA$ into a commutative $\Q$-algebra $(\QA,\ast)$, as shown in~\cite{H}. The algebra $(\h^1,\ast)$ can be viewed as a subalgebra of $(\QA,\ast)$ via~\eqref{eq:identh1inqa}.
Most of the time, we will work with the generating series of our objects. Let $\B_0(\QA) = \QA$ and for $r\geq 1$ set
\begin{align}\label{eq:bimould}\B_r=\B_r(\QA) :=\QA\llbracket X_1,Y_1,\dots,X_r,Y_r \rrbracket \quad \text{and} \quad 
\B=\B(\QA) :=\bigoplus_{r\geq 0} \B_r(\QA).\qquad \end{align}
For $r\geq 0$ consider the family of formal power series $\genA=(\genA_0,\genA_1,\ldots)\in\B$, where $\genA_0=1$ and for $r\geq 1$ one has
\begin{align*}
    \genA_r\bi{X_1,\dots,X_r}{Y_1,\dots,Y_r} := \sum_{\substack{k_1,\dots,k_r\geq 1\\d_1,\dots,d_r \geq 0}} 	\ai{k_1,\dots,k_r}{d_1,\dots,d_r} X_1^{k_1-1} \dots X_r^{k_r-1} \frac{Y_1^{d_1}}{d_1!} \dots \frac{Y_r^{d_r}}{d_r!}\,.
\end{align*}

\begin{definition}\label{def:swap} We define the \emph{swap} as the linear map $\sigma: \B \rightarrow \B$ 
given by $\sigma(f_r)=(\sigma f_r)$ with
\begin{align}\label{eq:swapdef}
    \sigma\left(f_r\bi{X_1,\dots,X_r}{Y_1,\dots,Y_r}  \right) \df f_r\bi{Y_1 + \dots + Y_r,\dots,Y_1+Y_2,Y_1}{X_r,X_{r-1}-X_r,\dots,X_1-X_2}\,.
\end{align} 
Note that the swap~$\sigma$ is an involution.
\end{definition}

\begin{definition}
\label{obs:bimould} By applying a map $\rho: \B\to \B$ to the power series~$\genA$ one obtains a map $f: \QA\to\QA$ by comparing coefficients, i.e., $\rho\ai{k_1,\dots,k_r}{d_1,\dots,d_r}$ is defined as the coefficient of $X_1^{k_1-1} \dots X_r^{k_r-1} \frac{Y_1^{d_1}}{d_1!} \dots \frac{Y_r^{d_r}}{d_r!}$ of $\rho \genA_r\mspace{1mu}$
\end{definition}
In particular, we can think of the swap $\sigma: \QA\to\QA$ as the linear map defined in the above sense.

\begin{remark}There are several other ways to interpret the \emph{swap}~$\sigma$.
\begin{enumerate}[(i)]
\item 
\renewcommand{\vec}{\underline}
Explicitly,
\begin{align}
\sigma\ \ai{k_1, \dots , k_r}{d_1,\dots,d_r} = 
\sum_{\substack{a_1,\ldots,a_r\geq 1 \\ |\vec{a}|=|\vec{d}|+r}}\sum_{\substack{b_1,\ldots,b_r\geq 0 \\ |\vec{b}|=|\vec{k}|-r}}
C^{\vec{a},\vec{k}}_{\vec{b},\vec{d}}\, \ai{a_1,\ldots,a_r}{b_1,\ldots, b_r},
\end{align}
where for $\vec{a},\vec{b},\vec{d},\vec{k}\in \Z^r$ the constant $C^{\vec{a},\vec{k}}_{\vec{b},\vec{d}}$ is given by
\[C^{\vec{a},\vec{k}}_{\vec{b},\vec{d}} =(-1)^{|\vec{b}|}\prod_{j=1}^r\textstyle\binom{s_j(\vec{d})-s^j(\vec{a})+j-1}{a_{r-j+1}-1}\binom{k_{r-j+1}-1}{s_j(\vec{b})-s^j(\vec{k})+j-1} \frac{(a_j-1)!}{(k_j-1)!}(-1)^{s_j(\vec{k})+s^j(\vec{b})+j}
\]
and where for $\vec{\ell}\in \Z^r$ and $j=1,\ldots,r$, we write
\[ s_j(\vec{\ell}) = \sum_{i=1}^j \ell_i\,, \qquad s^j(\vec{\ell}) = \sum_{i=r-j+2}^r \ell_i\,.\]
\item The concept of `swap' originates from the works of \'{E}calle (see e.g., \cite{E}), where it acts as an important operator in his theory of moulds. In our context, its relevance emerges when exploring the $q$-series~\eqref{def:big}, which was a key motivator for introducing the space~$\fmes$. Their coefficients can be described as sums over certain partitions, and this also leads to a similar interpretation for the generating series:
The change of variables in the definition of the swap~\eqref{eq:swapdef} can be easily described by thinking of a Young diagram with rectangle widths $X_j$ and heights $Y_j\mspace{1mu}$. The change of variables can then be read off by taking the conjugate of this Young diagram as visualized in the following picture: 

\vspace{-0.4cm}
\begin{figure}[H]
	\centering
	\begin{tikzpicture}[scale=0.5]

\draw (0,2.5) -- (0,0) -- (1,0) -- (1,1.5) -- (2,1.5) -- (2,2.5);
\draw [densely dotted] (0,2.5) -- (0,3.5);
\draw [densely dotted] (2,2.5) -- (3,2.5) -- (3,3.5);
\draw (3,3.5) -- (4,3.5) -- (4,4.5) -- (6.2,4.5) -- (6.2,5.5) -- (0,5.5) -- (0,3.5);

\draw [thick, red] (0,5.5) -- (6.2,5.5); 
\draw [thick, blue] (0,5.5) -- (0,3.5); 
\draw [thick, blue, densely dotted] (0,2.5) -- (0,3.5);
\draw [thick, blue] (0,0) -- (0,2.5);

\draw[decoration={brace,raise=2pt},decorate] (0,5.5) -- node[above=2pt] {\scriptsize $X_1$} (6.2,5.5);
\draw[decoration={brace,raise=2pt},decorate] (0,4.5) -- node[above=2pt] {\scriptsize $X_2$} (4,4.5);
\draw[decoration={brace,raise=2pt},decorate] (0,2.5) -- node[above=2pt] {\scriptsize $X_{r-1}$} (2,2.5);
\draw[decoration={brace,raise=2pt},decorate] (0,1.5) -- node[above=2pt] {\scriptsize $X_{r}$} (1,1.5);

\draw[decoration={brace,raise=2pt},decorate] (6.2,5.5) -- node[right=2pt] {\scriptsize $Y_{1}$} (6.2,4.5);
\draw[decoration={brace,raise=2pt},decorate] (4,4.5) -- node[right=2pt] {\scriptsize $Y_{2}$} (4,3.5);
\draw[decoration={brace,raise=2pt},decorate] (2,2.5) -- node[right=2pt] {\scriptsize $Y_{r-1}$} (2,1.5);
\draw[decoration={brace,raise=2pt},decorate] (1,1.5) -- node[right=2pt] {\scriptsize $Y_{r}$} (1,0);

\draw [lightgray,ultra thin] (4,5.5) -- (4,4.7);
\draw [lightgray,ultra thin] (4.3,5.5) -- (4.3,4.7);
\draw [lightgray,ultra thin] (4.6,5.5) -- (4.6,4.7);
\draw [lightgray,ultra thin] (4.9,5.5) -- (4.9,4.7);
\draw [lightgray,ultra thin] (5.2,5.5) -- (5.2,4.7);
\draw [lightgray,ultra thin] (5.5,5.5) -- (5.5,4.7);
\draw [lightgray,ultra thin] (5.8,5.5) -- (5.8,4.7);
\draw [lightgray,ultra thin] (3.7,5.5) -- (3.7,3.7);
\draw [lightgray,ultra thin] (3.4,5.5) -- (3.4,3.7);
\draw [lightgray,ultra thin] (3.1,5.5) -- (3.1,3.7);
\draw [lightgray,ultra thin] (2.8,5.5) -- (2.8,2.7);
\draw [lightgray,ultra thin] (2.5,5.5) -- (2.5,2.7);
\draw [lightgray,ultra thin] (2.2,4.7) -- (2.2,2.7);
\draw [lightgray,ultra thin] (1.9,4.7) -- (1.9,2.7);
\draw [lightgray,ultra thin] (1.6,4.7) -- (1.6,1.7);
\draw [lightgray,ultra thin] (1.3,5.5) -- (1.3,3.1);
\draw [lightgray,ultra thin] (1,5.5) -- (1,3.3);
\draw [lightgray,ultra thin] (0.7,5.5) -- (0.7,3.3);
\draw [lightgray,ultra thin] (0.4,5.5) -- (0.4,2.4);
\draw [lightgray,ultra thin] (1.3,2.7) -- (1.3,1.6);
\draw [lightgray,ultra thin] (1,2.7) -- (1,1.5);
\draw [lightgray,ultra thin] (0.7,2.7) -- (0.7,2);
\draw [lightgray,ultra thin] (0.4,1.7) -- (0.4,0);
\draw [lightgray,ultra thin] (0.7,1.7) -- (0.7,0);

\draw [lightgray,ultra thin] (0.1,0.3) -- (0.9,0.3);
\draw [lightgray,ultra thin] (0.1,0.6) -- (0.9,0.6);
\draw [lightgray,ultra thin] (0.1,0.9) -- (0.9,0.9);
\draw [lightgray,ultra thin] (0.1,1.2) -- (0.9,1.2);
\draw [lightgray,ultra thin] (0.1,1.5) -- (0.9,1.5);

\draw [lightgray,ultra thin] (0.9,1.9) -- (1.9,1.9);
\draw [lightgray,ultra thin] (0.9,2.2) -- (1.9,2.2);
\draw [lightgray,ultra thin] (0.1,2.5) -- (1.9,2.5);

\draw [lightgray,ultra thin] (0.1,3.5) -- (2.9,3.5);
\draw [lightgray,ultra thin] (1.1,3.2) -- (2.9,3.2);
\draw [lightgray,ultra thin] (1.8,2.9) -- (2.9,2.9);

\draw [lightgray,ultra thin] (0.1,4.5) -- (3.9,4.5);
\draw [lightgray,ultra thin] (0.1,4.16) -- (3.9,4.16);
\draw [lightgray,ultra thin] (0.1,3.85) -- (3.9,3.85);

\draw [lightgray,ultra thin] (0.1,4.85) -- (1.4,4.85);
\draw [lightgray,ultra thin] (0.1,5.2) -- (1.4,5.2);
\draw [lightgray,ultra thin] (2.3,5.2) -- (6,5.2);
\draw [lightgray,ultra thin] (2.3,4.85) -- (6,4.85);

\draw [->,thick] (7.4,3) -- node[above=2pt]{\tiny conjugate} (10.4,3);

\begin{scope}[shift={(12,0.3)}]

\draw (0,2.5) -- (0,-0.6) -- (1,-0.6) -- (1,1.5) -- (2,1.5) -- (2,2.5);
\draw [densely dotted] (0,2.5) -- (0,3.5);
\draw [densely dotted] (2,2.5) -- (3,2.5) -- (3,3.5);
\draw (3,3.5) -- (4,3.5) -- (4,4.5) -- (5.5,4.5) -- (5.5,5.5) -- (0,5.5) -- (0,3.5);

\draw [thick, blue] (0,5.5) -- (5.5,5.5); 
\draw [thick, red] (0,5.5) -- (0,3.5); 
\draw [thick, red, densely dotted] (0,2.5) -- (0,3.5);
\draw [thick, red] (0,-0.6) -- (0,2.5);

\draw [lightgray,ultra thin] (4,5.5) -- (4,4.7);
\draw [lightgray,ultra thin] (4.3,5.5) -- (4.3,4.7);
\draw [lightgray,ultra thin] (4.6,5.5) -- (4.6,4.7);
\draw [lightgray,ultra thin] (4.9,5.5) -- (4.9,4.7);
\draw [lightgray,ultra thin] (5.2,5.5) -- (5.2,4.7);
\draw [lightgray,ultra thin] (3.7,5.5) -- (3.7,3.7);
\draw [lightgray,ultra thin] (3.4,5.5) -- (3.4,3.7);
\draw [lightgray,ultra thin] (3.1,5.5) -- (3.1,3.7);
\draw [lightgray,ultra thin] (2.8,5.5) -- (2.8,2.7);
\draw [lightgray,ultra thin] (2.5,5.5) -- (2.5,2.7);
\draw [lightgray,ultra thin] (2.2,4.7) -- (2.2,2.7);
\draw [lightgray,ultra thin] (1.9,4.7) -- (1.9,2.7);
\draw [lightgray,ultra thin] (1.6,4.7) -- (1.6,1.7);
\draw [lightgray,ultra thin] (1.3,5.5) -- (1.3,3.1);
\draw [lightgray,ultra thin] (1,5.5) -- (1,3.3);
\draw [lightgray,ultra thin] (0.7,5.5) -- (0.7,3.3);
\draw [lightgray,ultra thin] (0.4,5.5) -- (0.4,2.4);
\draw [lightgray,ultra thin] (1.3,2.7) -- (1.3,1.6);
\draw [lightgray,ultra thin] (1,2.7) -- (1,1.5);
\draw [lightgray,ultra thin] (0.7,2.7) -- (0.7,2);
\draw [lightgray,ultra thin] (0.4,1.7) -- (0.4,-0.5);
\draw [lightgray,ultra thin] (0.7,1.7) -- (0.7,-0.5);
\draw [lightgray,ultra thin] (0.1,-0.3) -- (0.9,-0.3);
\draw [lightgray,ultra thin] (0.1,0) -- (0.9,0);
\draw [lightgray,ultra thin] (0.1,0.3) -- (0.9,0.3);
\draw [lightgray,ultra thin] (0.1,0.6) -- (0.9,0.6);
\draw [lightgray,ultra thin] (0.1,0.9) -- (0.9,0.9);
\draw [lightgray,ultra thin] (0.1,1.2) -- (0.9,1.2);
\draw [lightgray,ultra thin] (0.1,1.5) -- (0.9,1.5);

\draw [lightgray,ultra thin] (0.9,1.9) -- (1.9,1.9);
\draw [lightgray,ultra thin] (0.9,2.2) -- (1.9,2.2);
\draw [lightgray,ultra thin] (0.1,2.5) -- (1.9,2.5);

\draw [lightgray,ultra thin] (0.1,3.5) -- (2.9,3.5);
\draw [lightgray,ultra thin] (1.1,3.2) -- (2.9,3.2);
\draw [lightgray,ultra thin] (1.8,2.9) -- (2.9,2.9);

\draw [lightgray,ultra thin] (0.1,4.5) -- (3.9,4.5);
\draw [lightgray,ultra thin] (0.1,4.16) -- (3.9,4.16);
\draw [lightgray,ultra thin] (0.1,3.85) -- (3.9,3.85);

\draw [lightgray,ultra thin] (0.1,4.85) -- (1.4,4.85);
\draw [lightgray,ultra thin] (0.1,5.2) -- (1.4,5.2);
\draw [lightgray,ultra thin] (2.3,5.2) -- (5.4,5.2);
\draw [lightgray,ultra thin] (2.3,4.85) -- (5.4,4.85);

\draw[decoration={brace,raise=2pt},decorate] (0,5.5) -- node[above=2pt] {\scriptsize $Y_1 + \dots + Y_r$} (5.5,5.5);
\draw[decoration={brace,raise=2pt},decorate] (0,4.4) -- node[above=2pt] {\tiny $\,\,Y_1 + \dots + Y_{r-1}$} (4,4.4);
\draw[decoration={brace,raise=2pt},decorate] (0,2.5) -- node[above=2pt] {\tiny $\,Y_1+Y_2$} (2,2.5);
\draw[decoration={brace,raise=2pt},decorate] (0,1.5) -- node[above=2pt] {\tiny $Y_1$} (1,1.5);

\draw[decoration={brace,raise=2pt},decorate] (5.5,5.5) -- node[right=2pt] {\scriptsize $X_r$} (5.5,4.5);
\draw[decoration={brace,raise=2pt},decorate] (4,4.5) -- node[right=2pt] {\scriptsize $X_{r-1}-X_r$} (4,3.5);
\draw[decoration={brace,raise=2pt},decorate] (2,2.5) -- node[right=2pt] {\scriptsize $X_2-X_3$} (2,1.5);
\draw[decoration={brace,raise=2pt},decorate] (1,1.5) -- node[right=2pt] {\scriptsize $X_1-X_2$} (1,-0.6);

\end{scope}
\end{tikzpicture}
\end{figure}
\noindent
For example, $X_1$ gets replaced by $Y_1+\dots+Y_r\mspace{1mu}$, since the new width of the top rectangle in the conjugated diagram comes from adding up all the heights from the original diagram.
\end{enumerate}

\end{remark}
Formal multiple Eisenstein series are objects whose generating series are invariant under the change of variables in~\eqref{eq:swapdef}. 

\begin{definition}\label{def:fmes}
We define the space of \emph{formal multiple Eisenstein series} as
\begin{align*}
    \fmes \df \faktor{ (\QA ,\ast)}{\I}\,,
\end{align*}
where $\I$ is the ideal in  $(\QA ,\ast)$ generated by $\sigma(w)-w$ for all $w\in \QA$. For $k_1,\dots,k_r\geq 1$ and $d_1,\dots,d_r\geq 0$ we denote the class of the word $w=\ai{k_1,\dots,k_r}{d_1,\dots,d_r}$ in $\fmes$ by $\gf(w)=\gf \bi{k_1,\dots,k_r}{d_1,\dots,d_r}$. We define a weight grading and two filtrations on $\fmes$, as follows:
\begin{enumerate}[(i)]\itemsep5pt
\item 
Write
\begin{align*}
\wt(w) & \df k_1+\ldots+k_r+d_1+\ldots+d_r, \\
\lwt(w) &\df  d_1+\ldots+d_r,  \\
\dep(w) &\df r
\end{align*}
for the \emph{weight},  \emph{lower weight} and \emph{depth} of the word~$w$ respectively. 
\item Write $\fmes_k$ for the subvectorspace of $\fmes$ generated by $\gf(w)$ with $\wt(w)=k$.
\item 
Write $\fil{\lwt}{d}\fmes$, $\fil{\dep}{r}\fmes$ for the lower weight and depth filtration on $\fmes$ respectively. We shorten the notation when considering two filtrations at the same time, that is
\begin{align}
\fil{\lwt}{d} \fmes &= \langle\, \gf(w) \mid w \in \A^\ast,\lwt(w) \leq d  \,\rangle_\Q\,,\\
\fil{\lwt,\dep}{d,r} \fmes &= \langle\, \gf(w) \mid w \in \A^\ast,\lwt(w) \leq d \text{ and } \dep(w) \leq r  \,\rangle_\Q\,, \text{ etc.}
\end{align}
\end{enumerate}
In the special case $\lwt(w)=0$, we write
\begin{align*}
    \gf(k_1,\dots,k_r) \df \gf \bi{k_1,\dots,k_r}{0,\dots,0}\,.
\end{align*}
\end{definition}

The space of formal multiple Eisenstein series is a  commutative $\Q$-algebra $(\fmes,\ast)$ where each element is swap invariant. Notice that the $\Q$-linear map
\begin{align}\begin{split}\label{eq:Gffromh1}
    \gf: (\h^1,\ast) &\longrightarrow \fmes\\
    z_{k_1}\cdots z_{k_r} &\longmapsto \gf(k_1,\dots,k_r)
    \end{split}
\end{align}
is an algebra homomorphism. Even though the double indices are crucial for the definition, a non-trivial conjecture is that the space~$\fmes$ is already spanned by the singles indexed $\gf(k_1,\dots,k_r)$, i.e., that the mapping~\eqref{eq:Gffromh1} is surjective. This conjecture is the formal version of the conjectures in \cite[Conjecture~4.3]{Ba}, \cite[Conjecture~5~(B2)]{BK} and~\cite[Conjecture~3.15]{BI}, of which only special cases are known (cf. \cite{Ba,Bu1,V}). 

\begin{conjecture}\label{conj:BBvI}
The map~\eqref{eq:Gffromh1} is surjective, i.e. 
\begin{align*}
    \fmes \simeq \fil{\lwt}{0}{\fmes}. 
\end{align*}
\end{conjecture}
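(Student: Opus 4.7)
The natural strategy is induction on a monovariant built from the lower weight and depth. Given $\gf\bi{k_1,\ldots,k_r}{d_1,\ldots,d_r}$ with $\sum d_i > 0$, the goal is to produce an identity in $\fmes$ expressing it as a polynomial in single-indexed symbols $\gf(k_1',\ldots,k_s')$ plus terms strictly smaller in the chosen well-ordering. The base case $\lwt(w) = 0$ is tautological, and in depth one the swap already yields $\gf\bi{1}{d} = d!\,\gf(d+1)$. However, an induction on $\lwt$ alone cannot work: the depth-one swap sends $\gf\bi{k}{d}$ to $\frac{d!}{(k-1)!}\gf\bi{d+1}{k-1}$, which changes $\lwt$ in a non-monotone way. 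The correct induction must therefore mix $\lwt$ with $\dep$.

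The inductive step is powered by combining swap with stuffle, as illustrated by the smallest nontrivial example. Starting from $\gf\bi{2}{1}$, the stuffle identity $\gf\bi{1}{1}\ast\gf\bi{1}{0} = \gf\bi{1,1}{1,0} + \gf\bi{1,1}{0,1} + \gf\bi{2}{1}$, the evaluation $\gf\bi{1}{1} = \gf(2)$, the depth-two swap identities $\gf\bi{1,1}{1,0} = \gf(1,2) + \gf(2,1)$ and $\gf\bi{1,1}{0,1} = \gf(2,1)$ (both obtained by extracting the $X_1^0X_2^0Y_i$-coefficients of $\sigma\genG_2 = \genG_2$), and the single-indexed stuffle $\gf(2)\gf(1) = \gf(2,1) + \gf(1,2) + \gf(3)$, collapse to $\gf\bi{2}{1} = \gf(3) - \gf(2,1) \in \fil{\lwt}{0}\fmes$. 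The general pattern one hopes to establish: every double-indexed symbol can be solved out of a carefully chosen stuffle identity whose remaining terms, after applying the depth-raised swap, are either single-indexed or fall under the inductive hypothesis at smaller depth or smaller lower weight.

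To formalise this systematically I would work at the level of generating series. Encoding swap invariance as $\sigma\genG_r = \genG_r$ and stuffle as the polynomial identities on products $\genG_{r_1}\cdot\genG_{r_2}$ in the algebra $\B$, the problem becomes: construct an explicit operator on $\B$ which, given the $Y$-constant coefficients of the $\genG_r$ (i.e.\ the single-indexed $\gf(k_1,\ldots,k_r)$), reconstructs each remaining coefficient as an explicit polynomial in them, with the depth-one formula $\gf\bi{1}{d} = d!\,\gf(d+1)$ as base case and the depth-two expansion above as the first inductive step. One would seek a closed-form inversion of the shift $Y_i \leftrightarrow X_i$ enforced by $\sigma$, working modulo the stuffle ideal.

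The main obstacle --- and the reason the statement is only a conjecture --- is producing such a uniform reduction. The linear system arising from swap together with all stuffles $\genG_{r_1}\cdot\genG_{r_2}$ grows rapidly in weight and depth, and no combinatorial master identity extracting a canonical single-indexed representative is known. In practice I would first verify the induction explicitly for $r \leq 2$ (paralleling \cite{BKM}) and for $r = 3$, searching for a pattern that could be packaged into a generating-series identity expressing $(\sigma - \mathrm{id})\genG_r$, modulo stuffles, in terms of $Y$-constant coefficients only. A potentially more promising route is Burmester's balanced setup \cite{Bu1, Bu3}, where swap and stuffle are put on a more symmetric footing and the desired reduction may become transparent; absent such a closed-form identity, only weight-by-weight verification of the conjecture seems within reach.
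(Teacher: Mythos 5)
This statement is Conjecture~\ref{conj:BBvI}; the paper offers no proof of it, and it remains open, so there is no argument of the authors' to compare yours against. You correctly recognize this, and your writeup is a strategy sketch rather than a proof. Within that frame, your concrete computations are sound: the depth-one swap indeed gives $\gf\bi{1}{d}=d!\,\gf(d+1)$, and your reduction of $\gf\bi{2}{1}$ checks out --- the swap coefficients $\gf\bi{1,1}{1,0}=\gf(1,2)+\gf(2,1)$ and $\gf\bi{1,1}{0,1}=\gf(2,1)$ are correct, and combining the two stuffle expansions of $\gf(2)\gf(1)$ yields $\gf\bi{2}{1}=\gf(3)-\gf(2,1)$, consistent with the paper's remark that $D\gf(1)=\gf(3)-\gf(2,1)$ together with $D\gf\bi{1}{0}=\gf\bi{2}{1}$. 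Your diagnosis of the obstruction --- that no uniform ``master identity'' is known which solves a general double-indexed symbol out of the swap-plus-stuffle linear system, and that $\lwt$ alone is not a monotone induction variable --- matches the state of the art; the paper itself only records that special cases are known (citing \cite{Ba,Bu1,V}) and proposes the refined form $\fil{\lwt,\dep}{d,r}\fmes\subset\fil{\lwt,\dep}{0,d+r}\fmes$, which is precisely the mixed $(\lwt,\dep)$ bookkeeping you say any induction would need.

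The one thing to flag is that nothing in your proposal should be mistaken for progress toward closing the gap: the inductive step is exactly the unproven content, and exhibiting it for $r\le 2$ or $r=3$ (or in Burmester's balanced setup) would constitute new partial results, not a verification of an existing argument. If you pursue this, the most useful target suggested by your own setup would be a closed-form expression for $(\sigma-\mathrm{id})$ applied to a generating series with nonconstant $Y$-part, reduced modulo the stuffle relations against $\genG_1\bi{X}{Y}$-factors, since that is where your depth-two example generalizes; but as written the proposal is an accurate description of why the statement is a conjecture, not a proof of it.
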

A more refined version of the conjecture is that
\[\fil{\lwt,\dep}{d,r}{\fmes} \subset \fil{\lwt,\dep}{0,d+r}{\fmes} \qquad \text{for all } d,r\geq 0.\]

\begin{remark}\label{rem:anotherdefoffmes}
Conjecture~\ref{conj:BBvI} raises the natural question of whether we can give an explicit description of the kernel of~\eqref{eq:Gffromh1}, that is, if there exists an explicit description of an ideal $\mathfrak{R} \subset (\h^1,\ast)$, such that $\fmes \cong \faktor{(\h^1,\ast)}{\mathfrak{R} }$.
This would make it easier to compare the relations satisfied by (formal) multiple Eisenstein series with those satisfied by (formal) multiple zeta values. Conjecturally, the space of (stuffle regularized) multiple zeta values is isomorphic to $ \faktor{(\h^1,\ast)}{\mathfrak{E} }$, where $\mathfrak{E}$ is the ideal generated by $ w \ast v - w \shuffle v$ with $w \in \h^1, v \in \h^0$ (extended double shuffle relations). Since multiple zeta values should satisfy at least those relations of multiple Eisenstein series (as their constant terms), we would in particular expect $\mathfrak{R} \subset \mathfrak{E}$. 
\end{remark}

\begin{remark} Let $\fmes_k$ denote the space of formal multiple Eisenstein series of weight $k$. 
The conjectured dimensions for them are as follows:
\begin{align*}
\sum_{k\geq 0} \dim \fmes_k\, X^k &\overset{?}{=} \frac{1}{1-X-X^2-X^3+X^6+X^7+X^8+X^9}\\
&= 1+X+2 X^2+4 X^3+7 X^4+13 X^5+23 X^6+41 X^7+73 X^8+ \dots \,.
\end{align*}
This conjecture emerges from combining the dimension conjectures for $\mz_q\mspace{1mu}$ as discussed in~\cite{BK}, with the hypothesis that all relations in $\mz_q$ are derived from the swap invariance and the stuffle product. It should be noted that the dimensions given in~\cite[Remark~4.14]{BT} corroborate these figures. Multiplying the above series by $(1-X)$, we can extract the dimensions of the (admissible) multiple Eisenstein series presented there.
\end{remark}

\subsection{Relations among formal multiple Eisenstein series}\label{sec:rel}
We will now show some relations among formal multiple Eisenstein series in small depth. 
Recall the generating series of the formal multiple Eisenstein series is denoted by 
\begin{align*}
\genG_r \bi{X_1,\dots,X_r}{Y_1,\dots,Y_r} = \sum_{\substack{k_1,\dots,k_r\geq 1\\d_1,\dots,d_r \geq 0}} \gf\bi{k_1,\dots,k_r}{d_1,\dots,d_r} X_1^{k_1-1} \cdots X_r^{k_r-1} \frac{Y_1^{d_1}}{d_1!} \cdots \frac{Y_r^{d_r}}{d_r!}\,.
\end{align*}
Since the formal multiple Eisenstein series are swap invariant we have 
\begin{align}
\genG_r \bi{X_1,\dots,X_r}{Y_1,\dots,Y_r} = \genG_r \bi{Y_1 + \dots + Y_r,\dots,Y_1+Y_2,Y_1}{X_r,X_{r-1}-X_r,\dots,X_1-X_2}\,.
\end{align} 

On $\QA$ we can define another product~$\oshh$ by $w \osh v = \sigma( \sigma(w) \ast \sigma(v))$ for $w,v \in \QA$. One can easily check that since $\sigma$ is an involution, this product is commutative and associative. Due to the swap invariance of $\fmes$, this product is the same as the product~$\ast$. This implies a large family of relations among elements in $\fmes$, i.e., $f\ast g-f\osh g=0$ for all $f,g\in \fmes.$ These relations can be seen as an analogue of the double shuffle relations for multiple zeta values. If $f$ and $g$ are of depth~$1$, these relations are given as follows.
\begin{proposition} \label{prop:dshindep1} For $k_1,k_2\geq 1, d_1,d_2 \geq 0$ we have 
\begin{align*}
    \gf\bi{k_1}{d_1} \,\gf\bi{k_2}{d_2}=&\,\, \gf\bi{k_1,k_2}{d_1,d_2}+\gf\bi{k_2,k_1}{d_2,d_1}+\gf\bi{k_1+k_2}{d_1+d_2} \\
    = &\!\!\!\sum_{\substack{l_1+l_2=k_1+k_2\\ e_1+e_2=d_1+d_2}} \!\!\left(\!\binom{l_1-1}{k_1-1}\binom{d_1}{e_1}(-1)^{d_1-e_1} +   \binom{l_1-1}{k_2-1}\binom{d_2}{e_1} (-1)^{d_2-e_1} \!\right) \gf\bi{l_1,l_2}{e_1,e_2} \\&\quad +\frac{d_1! d_2!}{(d_1+d_2+1)!}\binom{k_1+k_2-2}{k_1-1}\gf\bi{k_1+k_2-1}{d_1+d_2+1}\,,
\end{align*}
where we sum over all $l_1,l_2 \geq 1$ and $e_1,e_2 \geq 0$, subject to $l_1+l_2=k_1+k_2$ and $e_1+e_2=d_1+d_2\mspace{1mu}$, in the second expression.
\end{proposition}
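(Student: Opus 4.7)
The first equality is immediate from applying the stuffle product of Definition~\ref{def:stuffleproduct} to the two length-one words $\ai{k_1}{d_1}$ and $\ai{k_2}{d_2}$, yielding the three displayed terms (and this identity descends to $\fmes$).

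For the second equality, the key observation is that in $\fmes$ one has the identity $f \ast g = \sigma(\sigma f \ast \sigma g)$ for all $f,g$: indeed $\sigma$ descends to the identity on $\fmes$ (since $\sigma(w)-w\in\I$ for all $w\in\QA$) and is an involution. The plan is thus to compute both sides of this identity and equate them, obtaining a second expression for $\gf\bi{k_1}{d_1}\gf\bi{k_2}{d_2}$.

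Reading off coefficients from $\genG_1\bi{X}{Y} = \genG_1\bi{Y}{X}$ gives the depth-one swap formula $\sigma\ai{k}{d} = \frac{d!}{(k-1)!}\ai{d+1}{k-1}$ in $\QA$. Applying the first equality (already proven) to $\sigma\gf\bi{k_1}{d_1}$ and $\sigma\gf\bi{k_2}{d_2}$ produces, with overall prefactor $\frac{d_1!d_2!}{(k_1-1)!(k_2-1)!}$, two depth-two terms $\gf\bi{d_1+1,d_2+1}{k_1-1,k_2-1}$ and $\gf\bi{d_2+1,d_1+1}{k_2-1,k_1-1}$, plus a depth-one term $\gf\bi{d_1+d_2+2}{k_1+k_2-2}$. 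A further application of the depth-one swap to the latter converts it to $\frac{d_1!d_2!}{(d_1+d_2+1)!}\binom{k_1+k_2-2}{k_1-1}\gf\bi{k_1+k_2-1}{d_1+d_2+1}$, matching the last term of the proposition exactly.

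The two depth-two pieces are then expressed in the $\gf\bi{l_1,l_2}{e_1,e_2}$ basis by reading off coefficients from the depth-two identity $\genG_2\bi{X_1,X_2}{Y_1,Y_2} = \genG_2\bi{Y_1+Y_2,Y_1}{X_2,X_1-X_2}$: expanding $(Y_1+Y_2)^{l_1-1}$ and $(X_1-X_2)^{e_2}$ by the binomial theorem produces factors of the form $\binom{l_1-1}{\cdot}$ and $\binom{\cdot}{\cdot}(-1)^{\cdot}$, and fixing the degrees in $X_1,X_2,Y_1,Y_2$ pins down the indexing $l_1+l_2=k_1+k_2$, $e_1+e_2=d_1+d_2$. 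The two summands $i=1,2$ in the stated coefficient arise because the two depth-two stuffle terms differ precisely by the exchange $(k_1,d_1)\leftrightarrow(k_2,d_2)$. The main technical obstacle is the bookkeeping in this last step: one must verify that the product of the prefactor $\frac{d_1!d_2!}{(k_1-1)!(k_2-1)!}$, the binomial factors from the swap expansion, and the denominator $\frac{1}{e_1!e_2!}$ collapses to the clean form $\binom{l_1-1}{k_i-1}\binom{d_i}{e_1}(-1)^{d_i-e_1}$; this reduces to standard identities such as $\binom{n}{r}=\binom{n}{n-r}$ and the rewriting $\binom{e_2}{d_1}\frac{d_1!d_2!}{e_1!e_2!}=\binom{d_2}{e_1}$ (using $e_2-d_1=d_2-e_1$), after which the equality follows.
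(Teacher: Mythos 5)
Your proposal is correct and follows essentially the same route as the paper: both prove the second equality by computing $\sigma(\sigma(\gf\bi{k_1}{d_1})\ast\sigma(\gf\bi{k_2}{d_2}))$ (the product $\osh$, which agrees with $\ast$ on $\fmes$ by swap invariance) and extracting coefficients. The paper carries this out at the level of the generating series $\genG_1\osh\genG_1$ in one line, while you work letter-by-letter via $\sigma\ai{k}{d}=\frac{d!}{(k-1)!}\ai{d+1}{k-1}$, but the underlying computation is identical.
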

\begin{proof}
The first expression is a direct consequence of the definition of the product in $\fmes$. For the second, one calculates the coefficient of $X_1^{k_1-1} X_2^{k_2-1} \frac{Y_1^{d_1}}{d_1!} \frac{Y_1^{d_2}}{d_2!} $ in $\genG_1\bi{X_1}{Y_1} \osh 	\genG_1\bi{X_2}{Y_2}$, where
\begin{align*}
\genG_1\bi{X_1}{Y_1} \osh & \genG_1\bi{X_2}{Y_2}  
 \\& \= \genG_2\bi{X_1+X_2, X_2}{Y_1, Y_2-Y_1}+\genG_2\bi{X_1+X_2,X_1}{Y_2, Y_1-Y_2}+\frac{\genG_1\bi{X_1+X_2}{Y_1}-\genG_1\bi{X_1+X_2}{Y_2}}{Y_1-Y_2} \,.  \qedhere
    \end{align*}
\end{proof}
Proposition~\ref{prop:dshindep1} shows that the formal multiple Eisenstein series in depth two give a realization of the formal double Eisenstein space introduced in~\cite[Definition~2.1]{BKM}, since the latter are formal symbols satisfying the above relations. It was then shown in~\cite[Theorem~4.4]{BKM}, that these relations can be used to obtain the following relations.
\begin{theorem}\label{thm:relpevevk} For all $k_1,k_2\geq 1$ with $k=k_1+k_2\geq 4$ even we have 
\begin{align}
\begin{split}
    \frac{1}{2}\left(  \binom{k_1+k_2}{k_2} - (-1)^{k_1}\right) \gf(k) =  &\sum_{\substack{j=2\\j \text{even}}}^{k-2} \left( \binom{k-j-1}{k_1-1} + \binom{k-j-1}{k_2-1} - \ind_{j,k_1} \right)  \gf(j) \,  \gf(k-j)   \\
    &\, + \frac{1}{2} \left( \binom{k-3}{k_1-1} + \binom{k-3}{k_2-1}  + \ind_{k_1,1} + \ind_{k_2,1} \right) \gf\bi{k-1}{1}\,. 
\end{split}
\end{align}
\end{theorem}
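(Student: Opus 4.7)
The plan is to deduce this depth-one identity from the depth-two double shuffle relations provided by Proposition~\ref{prop:dshindep1}. First I would specialize that proposition to $d_1=d_2=0$, which for every pair $(k_1,k_2)$ with $k_1+k_2=k$ produces two expressions for the product $\gf(k_1)\gf(k_2)$: the stuffle expansion
\[
\gf(k_1)\gf(k_2) \= \gf(k_1,k_2) + \gf(k_2,k_1) + \gf(k),
\]
and the swap-dual expansion
\[
\gf(k_1)\gf(k_2) \= \sum_{l_1+l_2=k}\left(\binom{l_1-1}{k_1-1}+\binom{l_1-1}{k_2-1}\right)\gf(l_1,l_2) \,+\, \binom{k-2}{k_1-1}\gf\bi{k-1}{1}.
\]
Equating these two expressions for all admissible $(k_1,k_2)$ recovers precisely the defining relations of the formal double Eisenstein space of~\cite[Definition~2.1]{BKM}, so that $\gf$ realizes that formal space inside the depth $\leq 2$ part of $\fmes$. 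The identity then follows immediately from~\cite[Theorem~4.4]{BKM}.

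The combinatorial mechanism inside~\cite[Theorem~4.4]{BKM}, which is where the real content lies, is to form a specific $\Q$-linear combination of the depth-two relations above as $(k_1,k_2)$ runs over the decompositions of $k$, using exactly the weights $\binom{k-j-1}{k_1-1}+\binom{k-j-1}{k_2-1}$ that appear in the statement. A Vandermonde-type binomial identity then forces every depth-two symbol $\gf(l_1,l_2)$ to cancel out of this combination. On the opposite side, the surviving sums $\gf(k_1,k_2)+\gf(k_2,k_1)$ are rewritten as $\gf(k_1)\gf(k_2) - \gf(k)$ via the stuffle law, producing the bilinear terms $\gf(j)\gf(k-j)$ on the right-hand side and contributing to the coefficient of $\gf(k)$ on the left. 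The indicator corrections $\ind_{j,k_1}$, $\ind_{k_1,1}$ and $\ind_{k_2,1}$ track the diagonal cases $l_1 \in \{k_1,k_2\}$ and $l_2 = 1$, which need separate bookkeeping.

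The main obstacle, if one were to reconstruct the argument from scratch rather than cite it, is precisely this combinatorial elimination: aligning the two sides demands careful verification of several Vandermonde-style binomial identities together with correct accounting of the diagonal boundary terms. The hypothesis that $k$ be even enters in the final bookkeeping to ensure that the coefficient $\tfrac{1}{2}\bigl(\binom{k_1+k_2}{k_2}-(-1)^{k_1}\bigr)$ of $\gf(k)$ on the left-hand side matches the combinatorial prediction and is non-trivial.
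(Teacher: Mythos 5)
Your proposal is correct and takes essentially the same route as the paper: Proposition~\ref{prop:dshindep1} specialized to $d_1=d_2=0$ exhibits the depth-two part of $\fmes$ as a realization of the formal double Eisenstein space of \cite[Definition~2.1]{BKM}, and the stated identity is then imported directly from \cite[Theorem~4.4]{BKM}. Your additional sketch of the combinatorial elimination inside that cited theorem goes beyond what the paper writes down, but the citation is the actual proof in both cases.
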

\begin{proof}
This follows by sending $P\bi{j,k-j}{0,0}$ to $\gf(j)\gf(k-j)$ in~\cite[Theorem~4.4]{BKM}
\end{proof}

The following relations are special cases of Theorem~\ref{thm:relpevevk}, which will be used later when dealing with formal (quasi)modular forms. They can be seen as the formal version of the classical recursive formulas for Eisenstein series given in~\eqref{eq:classicalgkrelations}.
\begin{corollary} \label{cor:mfprod}
\begin{enumerate}[{\upshape (i)}]
\item For even $k\geq 4$ we have
\begin{align*}
    \frac{k+1}{2}\,\gf(k) \= \gf\bi{k-1}{1}  + \!\!\!\sum_{\substack{k_1+k_2=k \\ k_1, k_2\geq 2 \text{ even} }}\!\!\! \gf(k_1) \,\gf(k_2)\,.
\end{align*}
\item 	 For all even $k\geq 6$ we have
\begin{align*}
    \frac{(k+1)(k-1)(k-6)}{12}\,	\gf(k) \=  \!\!\!\sum_{\substack{k_1+k_2 = k\\k_1,k_2\geq 4 \text{ even}}} \!\!\! (k_1-1)(k_2-1)   \,\gf(k_1)   \,\gf(k_2)\,.
\end{align*}
\end{enumerate}

\end{corollary}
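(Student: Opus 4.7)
The plan is to derive both identities by specializing Theorem~\ref{thm:relpevevk} to well-chosen values of $(k_1, k_2)$, and for part~(ii) combining the specialization with part~(i) to eliminate the term involving $\gf\bi{k-1}{1}$.

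For (i), I would set $(k_1, k_2) = (1, k-1)$. Since $k_1 = 1$ is odd, $(-1)^{k_1} = -1$, so the coefficient of $\gf(k)$ on the left becomes $\tfrac{1}{2}(\binom{k}{k-1}+1) = \tfrac{k+1}{2}$. In the sum on the right, for every even $j \in [2, k-2]$ one has $\ind_{j,1} = 0$ and $\binom{k-j-1}{k-2} = 0$ (as $k-j-1 < k-2$), so the bracket collapses to $\binom{k-j-1}{0} = 1$. The coefficient of $\gf\bi{k-1}{1}$ is $\tfrac{1}{2}(1+0+1+0) = 1$, using $\binom{k-3}{k-2} = 0$, $\ind_{1,1} = 1$, and $\ind_{k-1,1} = 0$ (for $k \geq 4$). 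This reproduces (i) directly.

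For (ii), I would specialize to $(k_1, k_2) = (3, k-3)$, valid under $k \geq 6$. Separating the $j=2$ contribution in the sum (the unique index with $\binom{k-3}{k-4} \neq 0$) and substituting the expression for $\gf\bi{k-1}{1}$ from (i), the coefficient of $\gf(k)$ collects via the polynomial identity $2k(k-1)(k-2) + 12 - 3(k-3)(k-2)(k+1) = -(k+1)(k-1)(k-6)$, yielding $-\tfrac{(k+1)(k-1)(k-6)}{24}\gf(k)$ on the left. The essential combinatorial identity is
\[(k-j-1)(k-j-2) + (j-1)(j-2) - (k-3)(k-2) \= -2(j-1)(k-j-1),\]
immediate from setting $a = j-1$, $b = k-j-1$, $a + b = k-2$. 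Pairing index $j$ with $k-j$ in the residual sum then shows that the coefficient of $\gf(j)\gf(k-j)$ for $j$ even in $[4, k-4]$ reduces to a multiple of $(j-1)(k-j-1)$, matching the right-hand side of (ii) after multiplying through by $-2$.

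The main obstacle is the careful boundary bookkeeping: the separated $\gf(2)\gf(k-2)$ term must cancel exactly against the contribution from $j = k-2$ in the residual sum, and when $4 \mid k$ the midpoint $j = k/2$ must contribute once (not twice) to the unordered sum $\sum_{k_1+k_2=k}$ on the right of (ii). Both reduce to straightforward but delicate tracking of binomial coefficients, after which the proof is complete.
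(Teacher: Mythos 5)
Your proposal is correct, and it follows the same route the paper intends: the paper offers no written proof beyond the remark that these identities are special cases of Theorem~\ref{thm:relpevevk}, and your specializations $(k_1,k_2)=(1,k-1)$ for (i) and $(k_1,k_2)=(3,k-3)$ combined with (i) to eliminate $\gf\bi{k-1}{1}$ for (ii) are exactly the natural way to make that explicit. I checked the polynomial identity $2k(k-1)(k-2)+12-3(k-3)(k-2)(k+1)=-(k+1)(k-1)(k-6)$, the symmetrization identity $b(b-1)+a(a-1)-(a+b)(a+b-1)=-2ab$, and the boundary cancellations (the $j=2$ and $j=k-2$ pair vanishing, and the midpoint when $4\mid k$); all are sound.
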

Due to Euler we know that for $m\geq 1$ we have 
\begin{align*}
    \zeta(2m)= - \frac{B_{2m}}{2 (2m)!} (-2\pi i)^{2m}= - \frac{B_{2m}}{2 (2m)!} (-24 \zeta(2))^{m}.
\end{align*}
As an analogue, in our formal setup we can show the following.
\begin{corollary}[Generalized Euler relation]\label{cor:eulerrelation} For $m\geq 1$  we have
		\begin{align*}
			\gf(2m) = - \frac{B_{2m}}{2 (2m)!} (-24 \gf(2))^{m} + Q_{2m},
		\end{align*}	
  for $Q_{2m} \in D\fqmf$, where $\fqmf=\Q[\gf(2),\gf(4),\gf(6)]$ is the space of formal quasimodular forms. 
\end{corollary}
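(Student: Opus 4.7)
The plan is induction on $m\ge 1$, using Corollary~\ref{cor:mfprod}\,(i) to reduce $\gf(2m)$ modulo $D\fqmf$ to a multiple of $(-24\gf(2))^m$, with the classical Bernoulli convolution identity identifying the multiplier as $c_m:=-\tfrac{B_{2m}}{2(2m)!}$. The base case $m=1$ is trivial since $c_1=-\tfrac{1}{24}$ gives $c_1(-24\gf(2))=\gf(2)$, so $Q_2=0$.

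For the inductive step ($m\ge 2$), I would proceed as follows. First, use Corollary~\ref{cor:mfprod}\,(i) at $k=2m$ to write
\[
\tfrac{2m+1}{2}\,\gf(2m) \= \gf\bi{2m-1}{1}+\sum_{j=1}^{m-1}\gf(2j)\,\gf(2m-2j).
\]
From the definition~\eqref{eq:defD} of $D$ one has $D\gf(2m-2)=(2m-2)\gf\bi{2m-1}{1}$; combined with the fact that $\gf(2m-2)\in\fqmf$ (shown by a parallel induction using Corollary~\ref{cor:mfprod}\,(ii)), this puts the first summand in $D\fqmf$. Next, insert the inductive hypothesis $\gf(2j)=c_j(-24\gf(2))^j+Q_{2j}$ (with $Q_{2j}\in D\fqmf$) into each product $\gf(2j)\gf(2m-2j)$ and discard every cross term modulo $D\fqmf$; this yields
\[
\tfrac{2m+1}{2}\,\gf(2m)\equiv (-24\gf(2))^m\sum_{j=1}^{m-1}c_j c_{m-j}\pmod{D\fqmf}.
\]
Finally, the classical Bernoulli convolution identity $\sum_{j=1}^{m-1}\binom{2m}{2j}B_{2j}B_{2m-2j}=-(2m+1)B_{2m}$ (valid for $m\ge 2$), rearranged via $c_j=-\tfrac{B_{2j}}{2(2j)!}$, gives $\sum_{j=1}^{m-1}c_jc_{m-j}=\tfrac{2m+1}{2}\,c_m$, whence $\gf(2m)\equiv c_m(-24\gf(2))^m\pmod{D\fqmf}$, closing the induction.

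The main obstacle is the discarding of cross terms: one must verify that each $(-24\gf(2))^j Q_{2m-2j}$ and $Q_{2j}Q_{2m-2j}$ (with $Q_\bullet\in D\fqmf$) again lies in $D\fqmf$, i.e.\ that $D\fqmf$ is an ideal of $\fqmf$. The cleanest route is to invoke the $\sltwo$-algebra structure on $\fqmf$ from Theorem~\ref{thm:fmesissl2algebra} and the resulting isomorphism $\fqmf\cong\qmf$, reducing the claim to the classical statement that $D\qmf$ is an ideal of $\qmf$; this in turn can be verified directly from the Ramanujan-type identities expressing $D\gf(2), D\gf(4), D\gf(6)$ as polynomials in $\gf(2),\gf(4),\gf(6)$.
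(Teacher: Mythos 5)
Your strategy is essentially the coefficient-level version of the paper's argument: the paper packages the recursion of Corollary~\ref{cor:mfprod}(i) into the differential equation~\eqref{eq:Gdiffeq} and identifies the constants via the explicit $\coth$-solution, which encodes exactly your Bernoulli convolution identity. Your base case, the identification $\gf\bi{2m-1}{1}=\tfrac{1}{2m-2}D\gf(2m-2)$, and the numerical identity $\sum_{j=1}^{m-1}c_jc_{m-j}=\tfrac{2m+1}{2}c_m$ are all correct.

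The gap lies in your resolution of what you correctly single out as the main obstacle. The claim that $D\fqmf$ is an ideal of $\fqmf$ (equivalently, via Theorem~\ref{thm:fqmfcongqmf}, that $D\qmf$ is an ideal of $\qmf$) is false, and it does not follow from the Ramanujan identities. Indeed, with $f=\gf(4)$, $g=\gf(6)$ one has $4f\,Dg-6g\,Df=[f,g]_1$, the first Rankin--Cohen bracket~\eqref{eq:rcb}, which is a nonzero multiple of $\fDelta$; since $f\,Dg+g\,Df=D(fg)\in D\fqmf$, membership of $f\,Dg$ in $D\fqmf$ would force $\fDelta\in D\fqmf$. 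But $\fDelta\notin D\fqmf$: if $\fDelta=Dh$, then $\delta\fDelta=0$ together with $[\deer,D]=W$ and the injectivity of $D$ on positive weights forces, after five iterations, $\fDelta\in\Q\, D^5\gf(2)$, which is contradicted by comparing $q$-expansions under the realization $\varphi_G$ of Theorem~\ref{thm:cmes}. So you cannot discard the cross terms $(-24\gf(2))^jQ_{2m-2j}$ and $Q_{2j}Q_{2m-2j}$ by appealing to an ideal property. Two honest repairs: (a) weaken the conclusion to ``$Q_{2m}$ lies in the ideal of $\fqmf$ generated by $D\fqmf$'' --- then discarding cross terms is legitimate, and this weaker statement suffices for every later use of the corollary, since that ideal is contained in $\ker\pi$ by Proposition~\ref{prop:Dinkerphi}; or (b) prove directly that the specific cross terms occurring in your induction do lie in the linear span $D\fqmf$ (this holds in low weight, e.g.\ $\gf(2)^{j}D\gf(2)=\tfrac{1}{j+1}D(\gf(2)^{j+1})$ and $(D\gf(2))^2\in D\fqmf$, but it requires a genuine argument and is exactly the point your appeal to the ideal property was meant to avoid).
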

\begin{proof}
First extend Corollary~\ref{cor:mfprod}(i) to all $k=2m\geq 2$ by using the swap relation $\gf\bi{1}{1}= \gf(2)$:
\begin{align} \label{eq:2masprod}
    \gf(2m) = \frac{2}{2m+1}\gf\bi{2m-1}{1}  + \frac{1}{3}\one_{m=1}\, \gf(2) + \frac{2}{2m+1}\sum_{\substack{a+b=m \\ a, b\geq 1 }} \gf(2a)\, \gf(2b)\,.
\end{align}
Now, define the generating series. 
\begin{align*}
    G(X) = \sum_{m\geq 1} \gf(2m) \,X^{2m},\qquad F(X) = \sum_{m\geq 2} \gf\bi{2m-1}{1}\, X^{2m} =  X^2\sum_{m\geq 1} \frac{\D\gf(2m)}{2m} X^{2m}\,.
\end{align*}
Then \eqref{eq:2masprod} is equivalent to the following differential equation
\begin{align}\label{eq:Gdiffeq}
    2 G(X)^{2} = G(X) + X G'(X) - 3 \gf(2)X^2 - 2 F(X)\,.
\end{align}

Now assume we have a formal power series of the form $B(X)= \sum_{m\geq 1} b(2m) X^{2m}$ which satisfies the differential equation 
\begin{align}\label{eq:Bdiffeq}
    2 B(X)^{2} = B(X) + X B'(X) - 3 b(2) X^2.
\end{align} Then the coefficients $b(2m)$ are all recursively determined by $b(2)$, i.e., for a fixed $b(2)$ the series~$B(X)$ satisfying~\eqref{eq:Bdiffeq} is unique. Now one checks by direct calculation that the formal power series 
\begin{align*}
B(X) = \sum_{m\geq 1} b(2m) X^{2m} \df& \frac{1}{2} - \frac{\sqrt{-24 b(2)} X}{4} \coth\left( \frac{\sqrt{-24 b(2)} X}{2}\right ) \\ =& - \sum_{m \geq 1} \frac{B_{2m}}{2 (2m)!} (-24 b(2))^m X^{2m} 
\end{align*}
is a solution of~\eqref{eq:Bdiffeq}, i.e., $b(m) = - \frac{B_{2m}}{2 (2m)!} (-24 b(2))^{m} $ for all $m\geq 1$. Since $G$ satisfies the differential equation~\eqref{eq:Gdiffeq}, we see that $\gf(2m)$ is also recursively determined by $\gf(2)$ and---due to the additional term $-2F(X)$---by $D\gf(2l)$ for $l<m$. The statement follows inductively since any $\gf(2l)$ is contained in $\fqmf$ by Corollary~\ref{cor:mfprod}.
\end{proof}

\subsection{Formal multiple zeta values} 
In this section, we define the space of formal multiple zeta values. Conjecturally, these satisfy exactly the same relations as multiple zeta values. The definition in this work is equivalent to the usual definition of formal multiple zeta values as formal symbols modulo the extended double shuffle relations. The difference is that here we define the space of formal multiple zeta values as a quotient of~$\fmes$. This approach has the benefit of allowing connections to $q$-analogues of multiple zeta values and modular form on a formal level, which is not directly possible with the usual approach. This construction is motivated by the work~\cite{BI}, where the authors consider the behaviour of the $q$-series~\eqref{def:big} as $q\rightarrow 1$. 
We define the following two subsets of the alphabet $\A$. 
\begin{align*}
    \A_0 \df \left\{ \ai{k}{0} \mid k \geq 1 \right\} \,,\qquad 	\A^1 \df \left\{ \ai{1}{d} \mid d \geq 0 \right\} \,.
\end{align*}
With this we define the following ideal in $(\QA,\ast)$ generated by the set $\A^* \backslash ((\A^1)^* (\A_0)^*)$
\begin{align}\label{eq:N}
    \mathfrak{N} \df \left( \A^* \backslash ((\A^1)^* (\A_0)^*)\right)_{\QA}\,,
\end{align}
where for an alphabet $\mathcal{L}$ by $\mathcal{L}^*$ we denote the set of words in the letters $\mathcal{L}$, i.e., the free monoid generated by the elements in $\mathcal{L}$.
The generators of $\mathfrak{N}$ are exactly those elements which are \underline{not} of the form
\begin{align} 
    \gf\bi{1,\dots,1,k_1,\dots,k_r}{d_1,\dots,d_s,0,\dots,0}\,,
\end{align}
for some $k_1,\dots,k_r\geq 1, d_1,\dots,d_s \geq 0$. 

\begin{definition}\label{def:algebraformalmzv}
    The algebra of \emph{formal multiple zeta values} is defined by 
    \begin{align*}
        \fmz \df \faktor{\fmes\,}{\mathfrak{N}}\,.
    \end{align*}
\end{definition}

The justification for the name formal multiple zeta values comes from the fact that our notion is equivalent, up to the non-vanishing of $\fzeta(1)$ in our case, to the one by Racinet~\cite{R} (see Theorem~\ref{thm:formalmzvdsh} below), which consists of formal symbols satisfying the extended double shuffle relations. In particular, we expect $\fmz \cong \mathcal{Z}[T]$. Note that this definition does not coincide with the definition of formal multiple zeta values in the Introduction: it is the content of Theorem~\ref{thm:formalmzvdsh} that both definitions are equivalent. 

We denote the canonical projection of the space of formal multiple Eisenstein series into the space of formal multiple zeta values by 
\begin{align}\label{eq:pi}
    \proj: 	\fmes \longrightarrow \fmz\,.
\end{align}
This projection can be seen as a formal version of the `projection onto the constant term'.

\begin{proposition}\label{prop:profsurj}
    The map $\proj_{|\fmesz}: \fmesz \rightarrow \fmz$ is surjective.
\end{proposition}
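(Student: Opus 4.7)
The plan is to prove the equivalent reformulation $\fmes = \fmesz + \mathfrak{N}$; equivalently, for every surviving word $w\in (\A^1)^\ast(\A_0)^\ast$ I want $\gf(w)\in\fmesz+\mathfrak{N}$, as the non-surviving case is automatic. The argument I envisage is a double induction: primary on $n:=\dep(w)$, secondary on the statistic
\[
    \phi(w) \;:=\; \min\{\, i \;\mid\; k_i \geq 2\,\},
\]
with the convention $\phi(w)=n+1$ when every $k_i=1$. Two cases feed into the induction. First, if $w$ is single-indexed then $\gf(w)\in\fmesz$ by definition, covering in particular $\phi(w)=1$ (since $k_1\geq 2$ forces, for a surviving $w$, all $d_j=0$). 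Second, if every $k_i=1$, then extracting the coefficient of $X_1^0\cdots X_n^0$ from $\sigma\genA_n$ expresses $\gf\bi{1,\dots,1}{d_1,\dots,d_n}$ as a $\Q$-linear combination of single-indexed $\gf(a_1,\dots,a_n)$, so again $\gf(w)\in\fmesz$.

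For the inductive step, let $w$ be surviving with $2 \leq p:=\phi(w) \leq n$. Survival and the choice of $p$ force $k_i=1$ for $i<p$ and $d_j=0$ for $j\geq p$, so I may write $w=\ai{1,\dots,1,k_p,\dots,k_n}{d_1,\dots,d_{p-1},0,\dots,0}$ with $k_p\geq 2$. Set
\[
    A \;:=\; \gf\bi{1,\dots,1}{d_1,\dots,d_{p-1}},
    \qquad
    B \;:=\; \gf(k_p,\dots,k_n).
\]
The two base cases above show $A,B\in\fmesz$, and since $\fmesz$ is closed under $\ast$ (stuffling single-indexed words produces only single-indexed words) we have $A\ast B\in\fmesz$. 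Expanding the stuffle product as a sum of terms $\gf(T)$ and reducing modulo $\mathfrak{N}$, the key claim is that the only surviving terms are $\gf(w)$ itself together with corrections to which the induction hypothesis applies. Rearranging would then give $\gf(w)\equiv A\ast B - (\text{corrections})\pmod{\mathfrak{N}}$, with right-hand side in $\fmesz+\mathfrak{N}$, completing the step.

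The classification of surviving stuffle terms goes as follows. Any collision between a letter $\ai{1}{d_i}$ with $d_i\geq 1$ and a letter $\ai{k_j}{0}$ produces $\ai{1+k_j}{d_i}$, a letter outside $\A^1\cup\A_0$ (since $1+k_j\geq 2$ and $d_i\geq 1$), so the whole resulting term lies in $\mathfrak{N}$. The surviving collision terms therefore involve only collisions with $d_i=0$ and all have depth strictly less than $n$, so the primary induction disposes of them. For a non-colliding shuffle $T\neq w$ to survive, every $A$-letter $\ai{1}{d_i}$ with $d_i\geq 1$ must still precede the $B$-letter $\ai{k_p}{0}\in\A_0\setminus\A^1$, so the only allowed twist relative to $w$ is moving one or more $A$-letters $\ai{1}{0}$ past $\ai{k_p}{0}$. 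Since $\ai{k_p}{0}$ is exactly the first non-$\A^1$ letter of $T$, any such twist strictly decreases its position, yielding $\phi(T)<\phi(w)$, and the secondary induction applies.

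The main obstacle I foresee lies in this classification, and in particular in the monotonicity $\phi(T)<\phi(w)$ for every non-trivial surviving shuffle. This hinges on the choice of split at position $p-1$: with that split the first $B$-letter is $\ai{k_p}{0}\in\A_0\setminus\A^1$, whereas any coarser split would allow the first $B$-letter to be $\ai{1}{0}\in\A^1\cap\A_0$, in which case twisted shuffles might preserve $\phi$ and the induction would break. Once this canonical split and the resulting classification are in place, the remainder of the argument is straightforward bookkeeping over the stuffle expansion.
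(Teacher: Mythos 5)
Your proof is correct and takes essentially the same route as the paper: both reduce a surviving word to the product of its $\A^1$-prefix and $\A_0$-suffix modulo $\mathfrak{N}$ and then use $\sigma(\Q\langle \A^1\rangle)=\Q\langle \A_0\rangle$ to place the prefix in $\fmesz$. Your double induction on depth and $\phi$ simply makes explicit the bookkeeping of collision and shuffle cross-terms that the paper delegates to the ``usual stuffle-regularization calculation'' and to peeling off powers of $\gf(1)$.
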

\begin{proof}
All non-zero elements in $\fmz$ are linear combinations of elements of the form 
\begin{align}\label{eq:shapeoff}
    f = \gf\Bigg(\ebi{1,\dots,1,}{d_1,\dots,d_s,}\underbrace{\ebi{1,\dots,1}{0,\dots,0}}_{j}\ebi{,k_1,\dots,k_r}{,0,\dots,0}\Bigg) 
\end{align}
with $s,r \geq 0$, $j\geq 0$, $d_s \geq 1$ and $k_1 \geq 2$. By induction on $j$ one can apply the usual calculation used for the stuffle regularization (\cite[Proposition~4.18]{BI}) to show that $f$ can be written as 
\begin{align*}
    f \equiv \sum_{m=0}^j f_m \, \gf(1)^{m} \quad \mod \mathfrak{N}\,,
\end{align*}
where the $f_m$ are have the same shape as~\eqref{eq:shapeoff} with $j=0$. Such elements can be expressed as products $\gf\bi{1,\dots,1}{d_1,\dots,d_s} \gf\bi{k_1,\dots,k_r}{0,\dots,0}$ modulo $\mathfrak{N}$. By the definition of $\sigma$ it is easy to see that $\sigma( \Q\langle \A^1\rangle) =  \Q\langle \A_0 \rangle$, i.e., $\sigma\gf\bi{1,\dots,1}{d_1,\dots,d_s} \in \fmesz$. Therefore, we can find a representative for the class of~$f$ in~$\fmz$ which is an element in~$\fmesz$.
\end{proof}

\begin{definition}\label{def:formalmzv}
For $k_1,\dots,k_r\geq 1$ we define the  \emph{formal multiple zeta value} $\fzeta(k_1,\dots,k_r)$  by
\begin{align*}
    \fzeta(k_1,\dots,k_r)\df \proj(\gf(k_1,\dots,k_r))\,.
\end{align*}
\end{definition}

By Proposition~\ref{prop:profsurj} the $\fzeta$ span the space~$\fmz$ of formal multiple zeta values. Recall that the derivation $D$ (see~\eqref{eq:defD} in the introduction) is given by
\begin{align}\label{eq:derivonz}
    \D \gf \bi{k_1,\dots,k_r}{d_1,\dots,d_r}   \df \sum_{j=1}^r k_j \,    	\gf \bi{k_1,\dots,k_j+1,\dots,k_r}{d_1,\dots,d_j+1,\dots,d_r}\,.
\end{align}
 Since the formal multiple zeta values can be thought of as the `constant term' of formal multiple Eisenstein series, they should vanish under taking the derivative as we will see now.
\begin{proposition}\label{prop:Dinkerphi} We have $\D \fmes \subset \ker(\proj)$.
\end{proposition}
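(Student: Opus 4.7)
The argument is essentially a matching of definitions. The plan is to unpack what $\ker\pi$ looks like in terms of words, and then observe that every summand in the defining formula for $D$ lands in it by construction.

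First I would recall that, by Definition~\ref{def:algebraformalmzv}, the kernel of $\pi$ is the image in $\fmes$ of the ideal $\mathfrak{N}\subset(\QA,\ast)$ introduced in~\eqref{eq:N}, which is generated by those words that are \emph{not} of the form $\ai{1,\dots,1,k_1,\dots,k_r}{d_1,\dots,d_s,0,\dots,0}$; equivalently, by all words in $\A^\ast\setminus(\A^1)^\ast(\A_0)^\ast$. A word $w=\ai{k_1,\dots,k_r}{d_1,\dots,d_r}$ therefore gives a generator of $\mathfrak{N}$ whenever at least one of its letters $\ai{k_i}{d_i}$ satisfies simultaneously $k_i\geq 2$ and $d_i\geq 1$, since such a letter lies in neither $\A^1=\{\ai{1}{d}:d\geq 0\}$ nor $\A_0=\{\ai{k}{0}:k\geq 1\}$, and consequently $w$ cannot be split into a prefix in $(\A^1)^\ast$ followed by a suffix in $(\A_0)^\ast$.

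Now I would apply the defining formula
\[
D\gf\bi{k_1,\dots,k_r}{d_1,\dots,d_r}\;=\;\sum_{j=1}^{r}k_j\,\gf\bi{k_1,\dots,k_j+1,\dots,k_r}{d_1,\dots,d_j+1,\dots,d_r}
\]
and scrutinize a single summand. In the $j$-th term, the letter at position $j$ of the underlying word is $\ai{k_j+1}{d_j+1}$, which by construction satisfies both $k_j+1\geq 2$ and $d_j+1\geq 1$. By the observation above, this word is a generator of $\mathfrak{N}$, so the corresponding summand lies in $\ker\pi$. Summing over $j$ and extending by $\Q$-linearity from the generators $\gf(w)$ to all of $\fmes$ yields $D\fmes\subset\ker\pi$.

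There is no real obstacle here: the derivation $D$ was evidently designed so that each application ``uses up'' a $d$-slot by inflating $(k_j,d_j)$ to $(k_j+1,d_j+1)$, while the ideal $\mathfrak{N}$ is tailored to kill exactly those words that contain a mixed letter $\ai{k}{d}$ with $k\geq 2$ and $d\geq 1$. The only implicit prerequisite is that $D$ is a well-defined operator on $\fmes$ (i.e., that it descends through the swap-invariance ideal $\I$), which is established when $D$ is introduced rather than in the present proposition.
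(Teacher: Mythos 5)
Your argument is correct and is essentially the paper's own proof: the paper likewise observes that each summand of $D\gf\bi{k_1,\dots,k_r}{d_1,\dots,d_r}$ contains a letter $\ai{k}{d}$ with $k\geq 2$ and $d\geq 1$, hence lies in $\mathfrak{N}$ and therefore in $\ker\pi$. You have merely spelled out in more detail why such a word cannot be factored as a prefix in $(\A^1)^\ast$ followed by a suffix in $(\A_0)^\ast$, which the paper leaves implicit.
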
		
\begin{proof}
By~\eqref{eq:derivonz} we see that $\D \fmes \subset \mathfrak{N}$ since it is a linear combination of words which all contain a letter~$\ai{k}{d}$ with $k\geq 2$ and $d\geq 1$.
\end{proof}
This proposition implies that all the relations we proved so far for elements in $\fmes$ are also true in $\fmz$ after setting all terms involving derivatives, or, more generally, elements in $\mathfrak{N}$, to zero. 
For example, as a direct consequence of Proposition~\ref{prop:dshindep1} we obtain the double shuffle relations in depth two: For $k_1,k_2 \geq 1$ we have 
\begin{align*}
    \fzeta(k_1)\,  \fzeta(k_2) &= \fzeta(k_1, k_2) +  \fzeta(k_2, k_1) + \fzeta(k_1+k_2) \\
    &= \sum_{l_1+l_2 = k_1+k_2} \left( \binom{l_1-1}{k_1-1} + \binom{l_1-1}{k_2-1} \right) \fzeta(l_1, l_2) + \ind_{k_1+k_2,2}\, \fzeta(2)\,.
\end{align*}
In particular, we obtain the relation $\fzeta(3) = \fzeta(2,1)$ by taking $k_1=1, k_2 = 2$. And as a consequence of Corollary~\ref{cor:eulerrelation} we get Eulers relation, i.e., for $m\geq 1$ we have 
\begin{align*}
    \fzeta(2m) = -\frac{B_{2m}}{2 (2m)!} \left(-24 \fzeta(2) \right)^{m}\,.
\end{align*}
The $\Q$-linear map defined on the generators by 
\begin{align}\begin{split}\label{eq:fzetamap}
    \fzeta: \h^1 &\longrightarrow \fmz\\
    z_{k_1}\cdots z_{k_r} &\longmapsto \fzeta(k_1,\dots,k_r)
    \end{split}
\end{align}
is an algebra homomorphism with respect to the stuffle product~$\ast$. This follows from the fact that~\eqref{eq:Gffromh1} is an algebra homomorphism and from the definition of $\fzeta(k_1,\dots,k_r)$. The justification for calling $\fzeta$ formal multiple zeta values comes from the following theorem, stating that they conjecturally satisfy the same relations as ( $\ast$-regularized) multiple zeta values, namely the extended double shuffle relations (cf. \cite{IKZ,R}). Notice that, in contrast to~\cite{R}, we have $\fzeta(1)\neq 0$.
\begin{theorem}\label{thm:formalmzvdsh} The formal multiple zeta values $\fzeta$ satisfy exactly the extended double shuffle relations, i.e.\ the kernel of the map~\eqref{eq:fzetamap} is the ideal generated by $ w \ast v - w \shuffle v$ for $w \in \h^1$ and $v \in \h^0$.
\end{theorem}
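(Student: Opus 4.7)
We must establish both inclusions between $\ker\fzeta$ and the extended double shuffle ideal $\mathfrak{E}\subset\h^1$ generated by $w\ast v - w\shuffle v$ for $w\in\h^1, v\in\h^0$.

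For $\mathfrak{E}\subseteq\ker\fzeta$, I would exploit the second product $\osh$ on $\QA$. Since $\fzeta$ is a stuffle algebra homomorphism, it is enough to prove $\fzeta(w\shuffle v)=\fzeta(w)\fzeta(v)$ when $w\in\h^1$ and $v\in\h^0$. By swap invariance, $\gf(w)\ast\gf(v)=\gf(w)\osh\gf(v)$ in $\fmes$, so the heart of the matter is the intermediate claim
\[
\gf(w)\osh\gf(v)\;\equiv\;\gf(w\shuffle v)\pmod{\mathfrak{N}}\,,
\]
after which applying $\proj$ yields the desired identity. The depth $(1,1)$ case of this congruence is already contained in the proof of Proposition~\ref{prop:dshindep1}: the correction term $\gf\bi{k_1+k_2-1}{1}$ that distinguishes $\osh$ from $\shuffle$ is a non-admissible symbol, and therefore lies in $\mathfrak{N}$, exactly when at least one of $k_1,k_2\geq 2$. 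For the inductive step I would expand $\sigma w\ast\sigma v$ via the recursive stuffle formula, apply $\sigma$ once more, and check that every ``collision letter'' $\ai{k_i+k_j}{d_i+d_j}$ with $d_i+d_j\geq 1$ becomes non-admissible after swapping back, while the surviving terms exactly reproduce the recursive definition of $\shuffle$ (with the admissibility hypothesis $v\in\h^0$ being used precisely to guarantee at each stage that the collisions carry a nonzero lower-weight index).

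For the reverse inclusion $\ker\fzeta\subseteq\mathfrak{E}$, the first part of the argument gives a well-defined surjection $\h^1/\mathfrak{E}\twoheadrightarrow\fmz$, and the task is to prove it injective. My plan is to construct an explicit inverse $\fmz\to\h^1/\mathfrak{E}$ by iterating the reduction of Proposition~\ref{prop:profsurj}: given $[\gf(w)]\in\fmz$, swap invariance followed by the stuffle regularization trick produces a representative of the form $\sum_m f_m\,\gf(1)^m$ with each $f_m$ a product of an admissible $\gf(k_1,\dots,k_r)$ and a term that swaps into $\fmesz$, so every class in $\fmz$ admits a canonical representative in $\h^1$. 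The main obstacle, and what I expect to be the technical heart of the theorem, is showing that this reduction is well-defined modulo $\mathfrak{E}$: all ambiguities in the order of applying swap invariance and stuffle regularization must cancel modulo stuffle/shuffle differences with an admissible factor. I would handle this by a simultaneous induction along the lower weight and depth filtrations of $\fmes$, mirroring the analysis used in the $\supseteq$ direction. A less self-contained alternative would be to appeal to the balanced setup of Burmester's thesis, which presents $\fmz$ directly as the EDSR quotient of $\h^1$ and forces equality of the two ideals via the surjection above.
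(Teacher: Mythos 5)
Your two-inclusion architecture is sound in outline, but both halves stop precisely at the points where the real work lies. For the inclusion of the double shuffle ideal into $\ker\fzeta$, your intermediate claim $\gf(w)\osh\gf(v)\equiv\gf(w\shuffle v)\pmod{\mathfrak{N}}$ is true, but the induction you sketch does not establish it. Since $\sigma(\h^1)=\Q\langle\A^1\rangle$, every collision in $\sigma(w)\ast\sigma(v)$ has the form $\ai{1}{d_i}\diamond\ai{1}{d_j}=\ai{2}{d_i+d_j}$, and applying $\sigma$ to a word containing such a letter is a \emph{global} change of variables that mixes all entries; there is no letter-by-letter bookkeeping by which ``each collision with $d_i+d_j\geq 1$ becomes non-admissible'' in depth $\geq 2$, and you have not verified it beyond the depth $(1,1)$ case. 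Equally unproven is your assertion that the collision-free terms ``exactly reproduce the recursive definition of $\shuffle$'': that the swap intertwines the index shuffle on $\LL_z$-words with the $xy$-shuffle on $\h^1$ is the content of \cite[Proposition~8]{I} (cf.\ Remark~\ref{rem:racinetikz}) and is itself a nontrivial Hopf-algebra duality. The paper's proof packages exactly these two difficulties into generating-series identities: the factorization $F=G\odot H$ forced by $\mathfrak{N}$ (condition~(ii) of Proposition~\ref{prop:bijectionv1}), the anti-homomorphism property $\sigma(F\odot G)=\sigma(G)\odot\sigma(F)$ of Proposition~\ref{prop:sigmaanticommutes}, and the correction factor $F_{\rm corr}$ in Proposition~\ref{prop:appendix}, which is needed because $\fzeta(1)\neq 0$ and which appears nowhere in your argument.

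For the reverse inclusion you explicitly defer the well-definedness of your reduction map $\fmz\to\h^1/\mathfrak{E}$, which is the entire content of that direction; an induction ``along the lower weight and depth filtrations'' is a hope, not an argument. The paper obtains the inverse for free: Proposition~\ref{prop:bijectionv1} and Proposition~\ref{prop:appendix} give, over \emph{any} $\Q$-algebra $R$, a bijection between elements of $\cG_X(R)$ satisfying the extended double shuffle conditions and swap-invariant group-like elements of $\cB(R)$ annihilating $\mathfrak{N}$; applied to $R=\h^1/\mathfrak{E}$ with the tautological generating series, this produces the inverse map without choosing representatives. I would recommend either carrying out your combinatorics in full (in which case you will essentially re-derive Propositions~\ref{prop:sigmaanticommutes}--\ref{prop:appendix} coefficient by coefficient) or adopting the generating-series formulation of the appendix from the start.
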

\begin{proof}
In this proof, we make use of the notation and results in the appendix. First observe that the element $F \in \mathcal{B}(\fmz)^\times$ given in depth $r$ by 
\begin{align*}
\sum_{\substack{k_1,\dots,k_r\geq 1\\d_1,\dots,d_r \geq 0}} \pi\left( \gf\bi{k_1,\dots,k_r}{d_1,\dots,d_r} \right) X_1^{k_1-1} \cdots X_r^{k_r-1} \frac{Y_1^{d_1}}{d_1!} \cdots \frac{Y_r^{d_r}}{d_r!}
\end{align*}
satisfies, by the definition of the ideal~$\mathfrak{N}$, conditions~(i) and~(ii) of Proposition~\ref{prop:bijectionv1}. Moreover, since the formal multiple Eisenstein series satisfy the formula of the stuffle product, we also see $F \in \cG(\fmz)$. By Proposition~\ref{prop:appendix} we then find that the coefficients of the corresponding $H \in \cG_X(\fmz)$, which is exactly the generating series of the formal multiple zeta values $\fmz(k_1,\dots,k_r)$, satisfy exactly the extended double shuffle relations (as defined in Definition~\ref{def:regdsh}). This implies the statement in the theorem (cf. Remark~\ref{rem:racinetikz}).
\end{proof}

\begin{remark} We define the \emph{formal conjugated multiple zeta values} for $d_1,\dots,d_r \geq 0$ by
\begin{align*}
    \xi^f(d_1,\dots,d_r) \df \pi\left(\gf\bi{1,\dots,1}{d_1,\dots,d_r} \right).
\end{align*}
These span the space~$\fmz$ and satisfy the index shuffle product. They can be seen as the formal analogues of the conjugated multiple zeta values defined in~\cite[Definition~1.3]{BI}.
\end{remark}

\subsection{Realizations}
Let $A$ be a $\Q$-algebra. For a subalgebra $S \subseteq \fmes$ we will call an algebra homomorphism $\varphi: S \rightarrow A$ a \emph{realization of $S$ in $A$}. Notice that finding a realization of $\fmes$ in $A$ is equivalent to finding an algebra homomorphism 
\begin{align}
\begin{split}\label{eq:realization}
\varphi: (\QA, \ast) &\longrightarrow A\\
w = \ai{k_1,\dots,k_r}{d_1,\dots,d_r} &\longmapsto \varphi(w)
\end{split}
\end{align}
which is $\sigma$-invariant, i.e. $\varphi(\sigma(w)) = \varphi(w)$ for all words $w \in \QA$.
Since the definition of~$\fmes$ is motivated by multiple zeta values and multiple Eisenstein series, a natural question is whether there exist corresponding realizations of $\fmes$. More precisely, since multiple zeta values are just defined for indices $(k_1,\dots,k_r) \in \Z_{\geq 1}^r$ and multiple Eisenstein series are defined for indices $(k_1,\dots,k_r) \in \Z_{\geq 2}^r$ a natural question is whether there are $\sigma$-invariant algebra homomorphisms~\eqref{eq:realization} for $A=\mz$ (resp. $A=\mathcal{O}(\Ha)$) with $\ai{k_1,\dots,k_r}{0,\dots,0} \mapsto \zeta(k_1,\dots,k_r)$ for $k_1,\dots,k_r\geq 1$ (resp.  $\ai{k_1,\dots,k_r}{0,\dots,0} \mapsto \mathbb{G}_{k_1,\dots,k_r}$ for $k_1,\dots,k_r\geq 2$). In particular, one difficulty is that one has to extend these maps to all $\ai{k_1,\ldots,k_r}{d_1,\ldots,d_r}$. The case for $A=\mz$ was solved in~\cite[Theorem~4.23]{BI}. The case for $A=\mathcal{O}(\Ha)$ remains open as mentioned in the Introduction. Instead in~\cite{BB} the authors introduced a `rational version' of multiple Eisenstein series, which gives a realization in $A=\Q\llbracket q \rrbracket$ such that $\varphi(\ai{k}{0}) = (-2\pi i)^{-k} \mathbb{G}_k$ for $k\geq 2$ and $q=e^{2\pi i \tau}$. The construction of the images of words in higher depth is inspired by the formula for the Fourier expansion of the multiple Eisenstein series. 
Both constructions in~\cite{BI} and~\cite{BB} start with the family of $q$-series $g(\cdots)$ defined by the first author in~\cite{Ba}; see~\eqref{def:big}. 
Notice that in the special case $r=1$ and $k\geq 1$ one has $g\bi{k}{0}=\frac{1}{(k-1)!} \sum_{n>0} \sigma_{k-1}(n) q^n$. The key observation is that the linear map $\varphi_g: \ai{k_1,\dots,k_r}{d_1,\dots,d_r} \mapsto g\bi{k_1,\dots,k_r}{d_1,\dots,d_r}$ is $\sigma$-invariant (\cite{Ba}). This map is not an algebra homomorphism with respect to the product $\ast$, but one has $\varphi_g( w \ast v) \equiv \varphi_g(w) \varphi_g(v)$ modulo terms of lower weight. In~\cite{BB} the authors were able to correct these lower weight terms without losing the $\sigma$-invariance to obtain the following:

\newcommand{\cmes}{\varphi_G}
\newcommand{\zetaphi}{\varphi_\zeta}

\begin{theorem}[\cite{BB}]\label{thm:cmes} There exists an algebra homomorphism $\cmes: \fmes \rightarrow \Q\llbracket q \rrbracket$ such that
\begin{align*}
    \cmes: \gf(k) \longmapsto G(k) = -\frac{B_k}{2 k!} + \frac{1}{(k-1)!} \sum_{n>0} \sigma_{k-1}(n) q^n, \qquad (k\geq 2).
\end{align*}
Moreover, for any $f \in \fmes$ we have $\cmes( D(f) )  = q \frac{d}{dq} \cmes(f)$.
\end{theorem}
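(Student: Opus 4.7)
The plan is to construct $\varphi_G$ by lifting it to a $\Q$-linear map $\tilde{\varphi} : \QA \to \Q\llbracket q\rrbracket$ that is simultaneously (i) a homomorphism for the stuffle product $\ast$, (ii) swap-invariant, so that $\tilde{\varphi}$ kills $\sigma(w)-w$ and descends to $\fmes = \QA/\I$, and (iii) sends $\ai{k}{0}$ to $G(k)$ for $k \geq 2$. At the level of generating series this amounts to building a bimould in $\mathcal{B}(\Q\llbracket q\rrbracket)$ that is simultaneously stuffle-symmetric (group-like, in the terminology of the appendix) and swap-symmetric.

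The natural starting point is the bimould assembled from the $q$-series $g\bi{k_1,\ldots,k_r}{d_1,\ldots,d_r}$ of~\eqref{def:big}. As recalled in the text and proved in~\cite{Ba}, this bimould is swap-symmetric (by the partition-conjugation picture following Definition~\ref{obs:bimould}), but only satisfies the stuffle identity modulo strictly lower-weight terms, as in~\eqref{eq:bigstuffleexample}. Following~\cite{BB}, one modifies the depth-one values by adding Bernoulli constant terms, setting $G(k) := g\bi{k}{0} - B_k/(2\cdot k!)$ for $k \geq 2$, and then defines the higher-depth entries as explicit $\Q$-linear combinations of products of $g$-series and these constants, patterned on the Fourier expansion of classical multiple Eisenstein series (\cite{Ba0, BT}). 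Stuffle symmetry of the modified bimould is checked by tracking how the lower-weight defects of $\varphi_g$ telescope against the added constant terms, while swap symmetry is preserved because the corrections are made in $\sigma$-matched pairs.

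The principal obstacle is enforcing both symmetries at once: the stuffle product is a quasi-shuffle on the upper indices only, whereas the swap is partition-conjugation that interchanges the roles of upper and lower indices. A naive stuffle-correction to $\varphi_g$ typically destroys swap-symmetry, so reconciling the two---by choosing the correction terms carefully, governed by the Bernoulli numbers---is the combinatorial heart of~\cite{BB}. Granting this construction, the derivation statement follows with much less work: both $\varphi_G \circ D$ and $q\tfrac{d}{dq} \circ \varphi_G$ are $\Q$-linear derivations from $\fmes$ into $\Q\llbracket q\rrbracket$ (with the target regarded as an $\fmes$-module through $\varphi_G$), so by the Leibniz rule it suffices to check their agreement on a generating set. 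On a representative $\gf\bi{k_1,\ldots,k_r}{d_1,\ldots,d_r}$, applying $q\tfrac{d}{dq}$ to the $q$-expansion of $g\bi{k_1,\ldots,k_r}{d_1,\ldots,d_r}$ extracts the factor $m_1 n_1 + \cdots + m_r n_r$ from the exponent, and a direct comparison with~\eqref{eq:defD} shows this matches the action of $D$ precisely (the shifts $k_j\mapsto k_j+1$, $d_j\mapsto d_j+1$ absorb the extra factors $n_j$ and $m_j$). Since $\varphi_G$ is built $\Q$-linearly from $g$-series and constants, and $q\tfrac{d}{dq}$ annihilates the constants, this identity propagates from $\varphi_g$ to $\varphi_G$.
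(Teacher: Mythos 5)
The paper does not actually prove this statement: Theorem~\ref{thm:cmes} is imported from~\cite{BB}, and the only argument offered in the text is the informal discussion preceding it, namely that the bimould of the $q$-series~\eqref{def:big} is swap-invariant but satisfies the stuffle product only modulo lower-weight terms, and that~\cite{BB} corrects these defects without losing $\sigma$-invariance. Your proposal reconstructs exactly this strategy, so in approach it agrees with the paper and its source. As written, however, it is an outline of the citation rather than a proof: the essential content --- the explicit higher-depth correction terms and the verification that the corrected bimould is simultaneously stuffle-symmetric and swap-invariant --- is precisely what you defer (``Following~\cite{BB}\dots'', ``Granting this construction\dots''), which is the same level of detail the paper itself supplies.

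One step deserves a concrete warning. For the identity $\varphi_G(D(f)) = q\frac{d}{dq}\varphi_G(f)$, the computation $q\frac{d}{dq}\, g\bi{k_1,\dots,k_r}{d_1,\dots,d_r} = \sum_j k_j\, g\bi{k_1,\dots,k_j+1,\dots,k_r}{d_1,\dots,d_j+1,\dots,d_r}$ for the \emph{uncorrected} series is indeed immediate from~\eqref{def:big}. But in depth $\geq 2$ the value $\varphi_G(w)$ is a linear combination of \emph{products} of $g$-series with Bernoulli constants, so your assertion that the identity ``propagates'' because $q\frac{d}{dq}$ annihilates constants is not by itself sufficient: one must check that the correction assigned to $D(w)$ coincides with $q\frac{d}{dq}$ applied to the correction assigned to $w$, i.e.\ that the correction scheme intertwines $D$ with $q\frac{d}{dq}$. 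This is true and is verified in~\cite{BB} from the explicit formula (compare the use of \cite[Proposition~6.30 and Corollary~6.31]{BB} in the proof of Proposition~\ref{prop:sltwosubalgebra}), but it is a genuine verification rather than a formal consequence of linearity. Note also that since both maps are $\Q$-linear and the classes $\gf\bi{k_1,\dots,k_r}{d_1,\dots,d_r}$ span $\fmes$ linearly, agreement on these elements already suffices; the appeal to the Leibniz rule and to a module structure on $\Q\llbracket q\rrbracket$ is unnecessary.
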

The image of $\cmes$ in Theorem~\ref{thm:cmes} for an arbitrary $\gf\bi{k_1,\dots,k_r}{d_1,\dots,d_r}$ is given by the combinatorial bi-multiple Eisenstein series value defined explicitly in~\cite[Definition~6.4]{BB}. Conjecturally, the space of bi-multiple Eisenstein series is isomorphic to $\fmes$. In particular, conjecturally, the realization~$\cmes$ in Theorem~\ref{thm:cmes} is injective.   

To obtain a realization of $\fmes$ in $\mz$ the authors in~\cite{BI} consider the (regularized) limit of $q\rightarrow 1$ of~\eqref{def:big} after multiplication with $(1-q)^{k_1+\dots+k_r+d_1+\dots+d_r}$. Lower weight terms also vanish under this limit and the resulting map is $\sigma$-invariant. As a consequence, one obtains the following:
\begin{theorem}[\cite{BI}]\label{thm:mzvrealization} There exists an algebra homomorphism $\cmes: \fmes \rightarrow \mz$ such that
\begin{align*}
    \zetaphi: \gf(k_1,\dots,k_r) \longmapsto \zeta^\ast(k_1,\dots,k_r), \qquad (k_1,\dots,k_r\geq 1).
\end{align*}
\end{theorem}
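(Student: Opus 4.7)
The plan is to build the realization from the $q$-series $g\bi{\cdot}{\cdot}$ in~\eqref{def:big}, following the strategy sketched in the paragraph preceding the statement. Explicitly, I would define the $\Q$-linear map on generators by
\begin{align*}
\zetaphi\!\left( \ai{k_1,\dots,k_r}{d_1,\dots,d_r} \right) \df \lim_{q\to 1}^{\reg} (1-q)^{k_1+\dots+k_r+d_1+\dots+d_r}\, g\bi{k_1,\dots,k_r}{d_1,\dots,d_r}(q),
\end{align*}
where $\lim^{\reg}_{q\to 1}$ denotes the regularized limit of~\cite{BI}: one extracts the polynomial-in-$\log(1-q)^{-1}$ part of the asymptotic expansion as $q\to 1^-$ and then specializes the transcendental parameter to $0$ (corresponding to the stuffle regularization variable $T=0$ on the MZV side). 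To obtain a map on $\fmes$ one must verify (i) swap invariance, so that the generators of the ideal~$\I$ lie in the kernel, and (ii) compatibility with the stuffle product~$\ast$.

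For (i), the swap invariance of the generating series of the $q$-series~$g$ was established in~\cite{Ba} via conjugation of partitions. Multiplication by $(1-q)^{\wt(w)}$ preserves this symmetry, and so does the regularized limit. For (ii), it was also shown in~\cite{Ba} that $\varphi_g(w)\,\varphi_g(v) = \varphi_g(w\ast v) + (\text{lower weight})$ in $\Q\llbracket q\rrbracket$; after multiplication of both sides by $(1-q)^{\wt(w)+\wt(v)}$, the lower-weight contributions acquire an extra positive power of $(1-q)$ and therefore vanish under $\lim^{\reg}_{q\to 1}$, so the leading terms reproduce $\zetaphi(w)\,\zetaphi(v) = \zetaphi(w\ast v)$.

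It remains to identify the image on the single-indexed generators. When $d_1=\dots=d_r=0$ the series $g\bi{k_1,\dots,k_r}{0,\dots,0}$ coincides, up to an overall factorial normalization, with the standard $q$-analogue of $\zeta(k_1,\dots,k_r)$ studied in~\cite{BK,BI}, whose regularized $q\to 1$ limit is precisely $\zeta^{\ast}(k_1,\dots,k_r)$. Combining these three ingredients yields the desired algebra homomorphism $\zetaphi: \fmes \to \mz$ with the stated values on $\gf(k_1,\dots,k_r)$.

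The main obstacle is the regularization step itself: one must prove that the asymptotic expansion of $(1-q)^N g(w)$ at $q=1$ is indeed a polynomial in $\log(1-q)^{-1}$ with MZV coefficients and that the ``constant term'' extraction is multiplicative on the $\Q$-algebra spanned by the $g$'s. This is the technical heart of~\cite{BI}; once it is in place, the swap invariance and the vanishing of lower-weight corrections follow directly from properties already recorded in~\cite{Ba}, so the proof in the present paper is essentially a quotation of that machinery.
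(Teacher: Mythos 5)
Your proposal is correct and follows essentially the same route as the paper, which states this result as a citation of~\cite{BI} after sketching exactly the construction you describe: the regularized $q\to 1$ limit of $(1-q)^{\wt}$ times the $q$-series~\eqref{def:big}, with swap invariance inherited from~\cite{Ba} and lower-weight terms killed by the extra powers of $(1-q)$. The paper also remarks that the theorem alternatively follows by composing the projection $\pi:\fmes\to\fmz$ with the map $\pi_\mz:\fmz\to\mz$ via Theorem~\ref{thm:formalmzvdsh}, but your argument matches the primary attribution.
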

The image of $\varphi$ in Theorem~\ref{thm:mzvrealization} for an arbitrary $\gf\bi{k_1,\dots,k_r}{d_1,\dots,d_r}$ is given by the bi-multiple zeta value defined explicitly in~\cite[Definition~4.22]{BI}. Note that the statement of Theorem~\ref{thm:mzvrealization} is also a consequence of Theorem~\ref{thm:formalmzvdsh}: If $\pi_\mz: \fmz \rightarrow \mz$ denotes the projection of formal multiple zeta values to (stuffle regularized) multiple zeta values, then we get a realization $\pi_\mz \circ \pi: \fmes\to \mz$, where $\pi$ is given in~\eqref{eq:pi}. We should mention that the definition of the ideal~$\mathfrak{N}$ and our definition of $\fmz$ are motivated by the work~\cite{BI} for exactly this reason.

\section{Quasi-shuffle algebras and their derivations}\label{sec:derivationsforqsh}

\subsection{Quasi-shuffle products}
Suppose we have a set $\LL$, called the set of \emph{letters}, and a commutative and associative product $\diamond$ on $\LL$. Extending this product bi-linearly to $\Q \LL$ gives a commutative non-unital $\Q$-algebra $(\Q \LL,  \diamond)$. We will be interested in $\Q$-linear combinations of multiple letters of $\LL$, i.e., in elements of $\QL$. Here and in the following we call monic monomials in $\QL$ \emph{words}. Moreover, we call the degree of this monomial the \emph{length} of the word. 
In the special case $\LL=\A$, the following definition recovers Definition~\ref{def:stuffleproduct}.  
	
\begin{definition}
Define the \emph{quasi-shuffle product}  $\qsh$ on $\QL$ as the $\Q$-bilinear product which satisfies $1 \qsh w = w \qsh 1 = w$ for any word $w\in \QL$ and
    \begin{align}\label{eq:qshdef}
        a w \qsh b v = a (w \qsh b v) + b (a w \qsh v) + (a \diamond b) (w \qsh  v) 
    \end{align}
    for any letters $a,b \in \LL$ and words $w, v \in \QL$. 
\end{definition}
This gives a commutative $\Q$-algebra $(\QL,\qsh)$ as shown in~\cite{H}. In the special case that $a \diamond b=0$ for all $a,b \in A$, we obtain the usual shuffle product $\qsh=\shuffle$.

All quasi-shuffle algebras over the same alphabet are isomorphic due to the results in~\cite{H}. In particular, any quasi-shuffle algebra $(\QL,\qsh)$ is isomorphic to the shuffle algebra $(\QL,\shuffle)$ with the isomorphism given by 
\begin{align}\label{eq:deflog}
    \log_\diamond : (\QL,\qsh) &\longrightarrow (\QL,\shuffle)\\
    a_1 \cdots a_r &\longmapsto \\ &\!\!\!\sum_{\substack{1 \leq l \leq r\\ i_1 + \dots + i_l=r\\i_1,\dots,i_l \geq 1}} \frac{(-1)^{r-l}}{i_1 \cdots i_l} (a_1 \diamond \dots \diamond a_{i_1}) (a_{i_1+1}\diamond \dots \diamond a_{i_1+i_2}) \cdots (a_{i_1+\dots+i_{l-1}+1}\diamond \cdots \diamond a_{r})\,\,.
\end{align}
Its inverse is given by the algebra homomorphism
\begin{align*}
    \exp_\diamond : (\QL,\shuffle) &\longrightarrow (\QL,\qsh)\\
    a_1 \cdots a_r &\longmapsto \\ &\!\!\!\sum_{\substack{1 \leq l \leq r\\ i_1 + \dots + i_l=r\\i_1,\dots,i_l \geq 1}} \frac{1}{i_1! \cdots i_l!} (a_1 \diamond \dots \diamond a_{i_1}) (a_{i_1+1}\diamond \dots \diamond a_{i_1+i_2}) \cdots (a_{i_1+\dots+i_{l-1}+1}\diamond \cdots \diamond a_{r})\,.
\end{align*}
In particular, for any derivation $\Theta$ on $(\QL,\shuffle)$, the map
\[\Theta_\diamond \df \exp_\diamond \circ \,\Theta \circ \log_\diamond\]
is a derivation on $(\QL,\qsh)$.

\newcommand{\dletter}{\varphi}
\newcommand{\dword}{\Theta^\varphi}
\subsection{Derivations}
In this section, we consider examples of derivations on $(\QL,\qsh)$. Some of these derivations were studied previously in~\cite{KK} for the shuffle algebra, but it seems that not much research is done on derivations for quasi-shuffle algebras in general. We start with a simple construction, which gives a derivation on the level of words coming from derivations on the level of letters. Let $\dletter: \Q \LL \rightarrow \Q \LL$ be a $\Q$-linear map. We extend $\dletter$ to a $\Q$-linear map $\dword:\QL \rightarrow \QL$ by $\dword(1)=0$ and for a word $w=a_1\cdots a_r$ with $a_1,\dots,a_r \in \LL$ by
\begin{align}
   \dword(w) =  \dword(a_1\cdots a_r) = \sum_{j=1}^r a_1 \cdots \dletter(a_j) \cdots a_r\,.
\end{align}

\begin{proposition}\label{prop:derisder}
The map 
\begin{align}\label{eq:lettertoworddermap}
    \operatorname{Der}(\Q \LL,\diamond) &\longrightarrow  \operatorname{Der}(\QL,\qsh)\\
    \dletter &\longmapsto \dword
\end{align}
is a Lie algebra homomorphism. 
\end{proposition}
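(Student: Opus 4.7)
The plan is to verify two things: first, that $\dword \in \Der(\QL, \qsh)$ whenever $\dletter \in \Der(\Q\LL, \diamond)$; second, that the assignment intertwines Lie brackets. Linearity over $\Q$ of the map $\dletter \mapsto \dword$ is immediate from the definition, so these two points suffice.

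For the first part I would induct on the combined length $|w|+|v|$, the case where $w$ or $v$ is empty being trivial. Two preliminary observations streamline the inductive step: (i) by extending its definition linearly, $\dword$ satisfies the identity $\dword(c\,u) = \dletter(c)\,u + c\,\dword(u)$ for any $c \in \Q\LL$ and any word $u$; and (ii) the quasi-shuffle recursion~\eqref{eq:qshdef} extends linearly to allow $\Q\LL$-valued leading letters. Writing $w = aw'$ and $v = bv'$, one expands $\dword(w\qsh v)$ using~\eqref{eq:qshdef} followed by~(i), and separately expands $\dword(w)\qsh v + w\qsh\dword(v)$ using~(i) followed by~(ii). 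Grouping terms by leading letter ($a$, $b$, or $a\diamond b$) and invoking the inductive hypothesis on the deeper quasi-shuffles, the two expansions match: the $a$- and $b$-groups agree immediately, while the $(a\diamond b)$-group requires precisely the Leibniz identity $\dletter(a\diamond b) = \dletter(a)\diamond b + a\diamond \dletter(b)$, which holds by hypothesis.

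For the second part no induction is needed. A direct computation on $w=a_1\cdots a_r$ shows that the cross terms in $\Theta^{\dletter_1}\Theta^{\dletter_2}(w)$ (those where $\dletter_1$ and $\dletter_2$ act at distinct positions) coincide with the cross terms in $\Theta^{\dletter_2}\Theta^{\dletter_1}(w)$, so they cancel in the commutator, leaving
\begin{align*}
[\Theta^{\dletter_1},\Theta^{\dletter_2}](w) \= \sum_{i=1}^r a_1\cdots [\dletter_1,\dletter_2](a_i) \cdots a_r \= \Theta^{[\dletter_1,\dletter_2]}(w).
\end{align*}
The main technical obstacle is the bookkeeping in part one: since $\dletter(a)$ is generally a $\Q$-linear combination of letters rather than a single letter, one must carefully extend both the definition of $\dword$ and the recursion~\eqref{eq:qshdef} to the entire space $\Q\LL$ before regrouping. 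Once this is done, the Leibniz property of $\dletter$ for the product $\diamond$ is precisely the single ingredient that makes the $(a\diamond b)$-terms recombine correctly.
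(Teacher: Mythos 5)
Your proposal is correct and follows essentially the same route as the paper: induction on $|w|+|v|$ for the derivation property, with the discrepancy between the two expansions collapsing to $\bigl(\dletter(a\diamond b)-\dletter(a)\diamond b - a\diamond\dletter(b)\bigr)(w\qsh v)=0$, and a direct cancellation of cross terms for the bracket compatibility. Your explicit remark about extending $\dword$ and the recursion~\eqref{eq:qshdef} linearly to $\Q\LL$-valued leading letters is a point the paper's proof uses implicitly, so it is a welcome clarification rather than a deviation.
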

\begin{proof}
\newcommand{\dw}{\Theta}
First, we show $\dword$ is a derivation. For this, we need to show $\dword(w \qsh v)  = \dword(w)\qsh v +  w \qsh \dword(v)$ for all words $v,w \in \QL$. From now on, we write $\dw$ for $\dword$. We proceed by induction on the sum of the length of $w$ and $v$. First of all, note the result clearly holds if $w$ or $v$ is the empty word. In particular, the smallest length case is clear.
For the general case we get by definition that for $a, b \in \LL$ and words $v,w \in \QL$
\begin{align*}
    \dw(a w& \qsh b v) \\
    \=&	\dw( a (w \qsh b v) + b (a w \qsh v) + (a \diamond b) (w \qsh  v) ) \\
    \=& 	\dletter(a) (w \qsh b v)  + a	\,\dw(w \qsh b v) +  \dletter(b) (a w \qsh v)  + b \,\dw(a w \qsh v) \,+ \\ &\qquad \dw(a \diamond b) (w \qsh  v) + (a \diamond b) \,\dw(w \qsh  v) \\
    \=& \dletter(a) (w \qsh b v)  + a	\,(\dw(w) \qsh b v)+ a\,(w \qsh \dw(b v))  +  \dletter(b) (a w \qsh v)  + b \,(\dw(a w) \qsh v )\, + \\
    &\qquad b \,(a w \qsh \dw(v))  + \dletter(a \diamond b) (w \qsh  v) + (a \diamond b) \,(\dw(w) \qsh  v + w \qsh \dw(v)),
\end{align*}
where we used the induction hypothesis. On the other hand, we have 
\begin{align*}
    \dw(a w) &\qsh b v + aw \qsh \dw(bv) \\
    \=&	\dletter(a) w \qsh b v  + a \dw(w)  \qsh b v   + a w \qsh \dletter(b) v   + a w \qsh b \,\dw(v)   \\
    \=& 	\dletter(a) (w \qsh b v) + b( \dletter(a) w \qsh v ) + (\dletter(a) \diamond b)(w \qsh v) + a (\dw(w)  \qsh b v) + b( a \dw(w)  \qsh v)\,+ \\ &+(a\diamond b) (\dw(w)  \qsh v)+  
  a (w \qsh \dw(b) v) +  \dletter(b)  ( a w \qsh v) + (a \diamond  \dletter(b) ) (w \qsh v) \,+ \\
  &+ a (w \qsh b \,\dw(v) )+ b(a w \qsh \dw(v) ) + (a \diamond b)( w \qsh \dw(v) )\,.
\end{align*}
Hence,
\[
\dw(a w \qsh b v) -\dw(a w) \qsh b v + aw \qsh \dw(bv) = \bigl(\dletter(a \diamond b)  - (\dletter(a) \diamond b)- (a \diamond  \dletter(b) )\bigr) (w \qsh v) ,
\]
which vanishes by using $\dletter$ is a derivation. 
Moreover, we have
\begin{align*}
[\Theta^{\dletter},\Theta^{\psi}](a_1\cdots a_r) \= &\sum_{i=1}^r\sum_{\substack{j=1\\ j\neq i}}^r a_1 \cdots \dletter(a_i)\cdots\psi(a_j)\cdots a_r  \\ & -\sum_{i=1}^r\sum_{\substack{j=1\\ j\neq i}}^r a_1 \cdots \dletter(a_j)\cdots\psi(a_i)\cdots a_r + \sum_{j=1}^r a_1 \cdots [\dletter,\psi](a_j)\cdots a_r \\
=\,& \Theta^{[\dletter,\psi]}(a_1\cdots a_r),
\end{align*}
which shows that \eqref{eq:lettertoworddermap} is a Lie algebra homomorphism.
\end{proof}

Later we will need the following generalization of Proposition~\ref{prop:derisder} for linear maps $\dletter$ which are not (necessarily) derivations with respect to $\diamond$, but satisfy the following identity
\begin{align}\label{eq:condphi}
2\dletter(a\diamond b\diamond c) = \dletter(a\diamond b)\diamond c + \dletter(b\diamond c)\diamond a + \dletter(c\diamond a)\diamond b,
\end{align}
 which is equivalent to the condition~\eqref{eq:gamma} on $\gamma$ below. Clearly, if $\dletter$ is a derivation on $(\Q \LL,\diamond)$ it satisfies~\eqref{eq:condphi}, but, since $(\Q \LL, \diamond)$ is a non-unital algebra, this identity does not imply that $\dletter$ is a derivation. Note that the map~$\dword$ below agrees with the map in the previous proposition in case $\dletter$ is a derivation. 
\begin{lemma}\label{lem:thetaphideriv}
Let $\dletter: \Q \LL \rightarrow \Q \LL$ be a $\Q$-linear map.
Let $\gamma:\Q\LL\times \Q\LL\to\Q\LL$ be the `failure of $\dletter$ being a derivation', i.e.,
\begin{align*}
    \gamma(a,b) \df \varphi(a \diamond b) - \varphi(a) \diamond b - a \diamond \varphi(b) \qquad (a,b\in \Q\LL).
\end{align*}
Assume $\gamma(a,b)=0$ for $a,b\in \im \diamond$ and
\begin{align}\label{eq:gamma}\gamma(a \diamond c,b)+\gamma(a,b\diamond c) = \gamma(a,b)\diamond c \qquad (a,b,c\in \Q\LL).
\end{align}
Then the $\Q$-linear map $\dword:\QL \rightarrow \QL$ defined by
\begin{align*}
 \dword(a_1\cdots a_r) &\df \sum_{j=1}^r a_1 \cdots \dletter(a_j) \cdots a_r
-\frac{1}{2}\sum_{j=1}^{r-1}  a_1 \cdots a_{j-1}\, \gamma(a_j,a_{j+1})\, a_{j+2} \cdots a_r 
\end{align*}
is a derivation on $(\QL, \qsh)$.
\end{lemma}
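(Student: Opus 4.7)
The plan is to prove the derivation identity $\Theta^\varphi(w \qsh v) = \Theta^\varphi(w) \qsh v + w \qsh \Theta^\varphi(v)$ for all $w, v \in \QL$ by induction on $|w| + |v|$, following the inductive template of the proof of Proposition~\ref{prop:derisder} but carrying the additional correction term throughout. Writing $\Theta^\varphi = T + C$ with $T$ the sum of $\varphi$-insertions and $C$ the $\gamma$-correction, the case when $\varphi$ is a derivation on $(\Q\LL,\diamond)$ reduces to Proposition~\ref{prop:derisder} since then $\gamma \equiv 0$ and $C \equiv 0$; in general the failure of $T$ alone to satisfy the derivation property is measured by $\gamma$, and $C$ is designed precisely to compensate for it.

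For the induction step, take $w = aw'$ and $v = bv'$ with $a, b \in \LL$ and expand via the quasi-shuffle recursion
\[
aw' \qsh bv' = a(w' \qsh bv') + b(aw' \qsh v') + (a \diamond b)(w' \qsh v').
\]
The auxiliary identity
\[
\Theta^\varphi(c\, u) = \varphi(c)\, u + c\, \Theta^\varphi(u) - \tfrac12\, \gamma(c, c_1)\, u' \qquad (c \in \Q\LL,\ u = c_1 u' \text{ a word}),
\]
which is immediate from the definition extended bilinearly in $c$, lets one peel off the leading letter of each summand before invoking the inductive hypothesis on $\Theta^\varphi(w' \qsh bv')$, $\Theta^\varphi(aw' \qsh v')$ and $\Theta^\varphi(w' \qsh v')$. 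Performing the analogous peeling on $\Theta^\varphi(aw') \qsh bv' + aw' \qsh \Theta^\varphi(bv')$ produces a parallel expression, and their difference collects into a sum of terms of the form $\gamma(\cdot,\cdot)\,(\text{word}) \qsh (\text{word})$ supported on the interface between $w'$ and $v'$.

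The two hypotheses on $\gamma$ are exactly what is needed to force these boundary terms to cancel. The identity $\gamma(a \diamond c, b) + \gamma(a, b \diamond c) = \gamma(a,b) \diamond c$ matches the $\gamma$-contributions from the ``$(a \diamond b)(w' \qsh v')$'' branch of the recursion against those produced when $C$ encounters the junction between $a$ (or $b$) and the first letter of the other word; the vanishing of $\gamma$ on $\im \diamond$ eliminates the higher-order cascades in which two consecutive letters are themselves outputs of $\diamond$, so that $T$ applied to $\varphi(a\diamond b)(w' \qsh v')$ produces no further $\gamma$-corrections at the newly created junction.

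The main obstacle is the combinatorial bookkeeping rather than any single deep step: because $C$ is non-local (it couples adjacent positions), the terms generated at the boundary of $w$ and $v$ proliferate, and one must carefully organize each correction according to whether the relevant adjacent pair of positions lies wholly inside $w'$, wholly inside $v'$, or straddles the interface. Once sorted this way, the verification reduces to finitely many elementary matchings of $\gamma$-values, each of which is settled by one of the two hypotheses; the inductive step then closes and the derivation property propagates to all of $\QL$.
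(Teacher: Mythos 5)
Your argument is correct in outline, but it takes a genuinely different route from the paper. The paper's proof is \emph{not} inductive: it expands $w \qsh v = \sum_u \binom{u}{w,v}_{\!\diamond}\, u$ once and for all, and then matches contributions position by position in each word $u$, according to whether the relevant letter (or adjacent pair) lies in $W$, in $V$, in $W \diamond V$, or straddles these sets --- the style of the proof of Proposition~\ref{prop:phideriv} rather than of Proposition~\ref{prop:derisder}. You instead induct on $\len(w)+\len(v)$, peel the leading letter via the identity $\dword(c\,u)=\dletter(c)\,u + c\,\dword(u)-\tfrac12\gamma(c,c_1)u'$, and push the mismatch to the interface between $w$ and $v$; this is legitimate, and your attribution of the two hypotheses is exactly right (the identity~\eqref{eq:gamma} reconciles the corrections coming from the $(a\diamond b)(w'\qsh v')$ branch with $\gamma(a,b)\diamond c$-type terms produced on the other side, while the vanishing of $\gamma$ on $\im\diamond$ kills the unmatched correction $\gamma(a\diamond b,\, w'_1\diamond v'_1)$ when the next letter is itself merged). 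A low-length check (e.g.\ $w=a$, $v=bc$) confirms that the leftover terms are precisely $-\tfrac12\gamma(a\diamond b,c)-\tfrac12\gamma(b,a\diamond c)+\tfrac12\,\gamma(b,c)\diamond a$, which vanish by~\eqref{eq:gamma}. What each approach buys: the paper's global case analysis avoids the recursion entirely but requires one to trust the multiplicity bookkeeping in the full quasi-shuffle expansion; your induction localizes everything to the two leading letters at the cost of re-expanding $\dword(aw')\qsh bv'$ through the recursion, where the correction term $-\tfrac12\gamma(a,w'_1)w''$ itself participates in further shuffles. Be aware that your write-up stops at the level of a plan --- the ``finitely many elementary matchings'' at the interface are asserted rather than displayed --- so to count as a complete proof you would still need to write out the three-way split (pair inside $w'$, pair inside $v'$, pair straddling) and verify each cancellation as above.
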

\begin{proof}
Again, we want to show $\dword(w \qsh v)  = \dword(w)\qsh v +  w \qsh \dword(v)$ for all words $w=w_1\cdots w_r$ and $v=v_1\cdots v_s\mspace{1mu}$. Setting $W=\{w_1,\dots,w_r\}$ and $V=\{v_1,\dots,v_s\}$ we have
\begin{align*}
    w \qsh v = \sum_{\substack{u=u_1 \cdots u_{t}\\ \max(r,s) \leq t \leq r+s}} \binom{u}{w,v}_{\!\!\diamond}\, u \,
\end{align*}
for some non-negative integers $\binom{u}{w,v}_{\!\!\diamond}$ and $u_j \in W \cup V\cup (W\diamond V)$. 

First, consider some $u_j\in W \cup V$  in the sum above, and write $u=u'u_ju''$. If $\dletter$ acts on $u_j\mspace{1mu}$, then the same contribution $u'\,\dletter(u_j)\,u''$ appears in both $\dword(w \qsh v)$ and $\dword(w)\qsh v+w\qsh \dword(v)$. Moreover, if both $u_j,u_{j+1}\in W$ (resp.~$V$), write $u=u'u_ju_{j+1}u''$. Then, if $\gamma$ acts on this pair, the same contribution \[-\frac{1}{2} 
u'\,\gamma(u_j,u_{j+1})\,u'' 
\] appears in  both $\dword(w \qsh v)$ and $\dword(w)\qsh v$ (resp. $w\qsh \dword(v)$). 

Next, consider letters $w_m\in W$ and $v_n\in V$. This gives rise to words with letter $w_m\diamond v_n$ as well as to words with $w_m$ and $v_n$ as consecutive letters, i.e., $u = u' (w_m\diamond v_n) u''$, $u=u'w_m v_n u''$ or $u=u'v_nw_mu''$ for some words $u', u''$. 
Then, the term $\dword(w\qsh v)$
contains 
\[u' (\varphi(w_m \diamond v_n)) u''-\frac{1}{2} u' \gamma(w_m,v_n) u''-\frac{1}{2} u' \gamma(v_n,w_m) u''.\]
Moreover, the terms
$\dword(w)\qsh v+w\qsh \dword(v)$ contain  
\[u' (\varphi(w_m) \diamond v_n) u'' + u' (w_m \diamond \varphi(v_n)) u''\]
with the same multiplicity. Note that by definition of $\gamma$ these together vanish. 

The last case that we have to consider is $u=u'(w_m\diamond v_n) u_{j+1} u''$. First, assume $u_{j+1}\in V\cup W$. W.l.o.g., say $u_{j+1}=w_{m+1}\in W$. Then, the term $\dword(w\qsh v)$
contains 
\[-\frac{1}{2} u' (\gamma(w_m \diamond v_n,w_{m+1})) u''-\frac{1}{2} u' (\gamma(w_m ,w_{m+1}\diamond v_n)) u''\]
Moreover, the term
$\dword(w)\qsh v$ contains
\[-\frac{1}{2} u' (\gamma(w_m,w_{m+1}) \diamond v_n) u''.\]
Now, these contributions vanish by assumption~\eqref{eq:gamma} on $\gamma$.
Finally, assume $u_{j+1}\in V\diamond W$ with $u=u'(w_m\diamond v_n) u_{j+1} u''$. Then, the term $\dword(w\qsh v)$
contains 
\[-\frac{1}{2} u' (\gamma(w_m \diamond v_n,u_{j+1}) u''\]
Note that, as both $w_m\diamond v_n$ and $u_{j+1}$ are in the image of $\diamond$, this contribution is zero.  

Observe that we have covered all words appearing in $\dword(w \qsh v), \dword(w)\qsh v$ and $w \qsh \dword(v)$
and that each word appear with zero coefficient in 
\[\dword(w \qsh v)  - \dword(w)\qsh v -  w \qsh \dword(v),\]
i.e., $\dword$ is a derivation.
\end{proof}

\begin{proposition}\label{prop:daderivationshuffle} For any $a\in \LL$ the linear maps
\begin{align*}
   \dlefta:\QL &\longrightarrow \QL,&\quad    \drighta:\QL &\longrightarrow \QL\\
    a_1 \cdots a_r &\longmapsto \ind_{a_1=a} a_2 \cdots a_r,&\quad     a_1 \cdots a_r &\longmapsto \ind_{a_r=a} a_1 \cdots a_{r-1}, 
\end{align*}
are derivations on $(\QL,\shuffle)$.
\end{proposition}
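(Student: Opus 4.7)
Both maps extract a letter from an extremity of a word, so the plan is to exploit the recursive definition of the shuffle product from the corresponding side. Recall that for $w = w_1 w'$ and $v = v_1 v'$ (with $w_1, v_1 \in \LL$) one has
\[
w \shuffle v \= w_1\,(w' \shuffle v) \+ v_1\,(w \shuffle v'),
\]
with the convention that $1 \shuffle u = u \shuffle 1 = u$. Since the shuffle product $\diamond = 0$ contributes no extra term, every word appearing in $w \shuffle v$ starts either with the first letter of $w$ or with the first letter of $v$.

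For $\dlefta$, I would argue directly from this recursion (handling the trivial cases $w = 1$ or $v = 1$ separately, where both sides equal $\dlefta(w)$ or $\dlefta(v)$). Applying $\dlefta$ to the displayed identity and using that $\dlefta$ only inspects the initial letter yields
\[
\dlefta(w \shuffle v) \= \ind_{w_1 = a}\,(w' \shuffle v) \+ \ind_{v_1 = a}\,(w \shuffle v'),
\]
which is exactly $\dlefta(w) \shuffle v + w \shuffle \dlefta(v)$ by the definition of $\dlefta$. This gives the Leibniz rule for all pairs of words and hence by bilinearity for all of $\QL$.

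For $\drighta$, the cleanest route is to invoke the word-reversal map $\rho : \QL \to \QL$, $\rho(a_1 \cdots a_r) = a_r \cdots a_1$. A standard (and easily verified) fact is that $\rho$ is an algebra automorphism of $(\QL, \shuffle)$, since reversing any shuffle of two words produces a shuffle of the reversed words. Moreover, $\rho$ is an involution, and by direct inspection $\drighta = \rho \circ \dlefta \circ \rho$. Conjugation of a derivation by an algebra automorphism is again a derivation, so $\drighta$ is a derivation on $(\QL, \shuffle)$.

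There is no real obstacle: the key structural point is simply that, because the shuffle of two words never merges letters, only the extremal letters of the original two words can occupy the extremities of any shuffle summand. This is precisely what makes $\dlefta$ and $\drighta$ well behaved for $\shuffle$, and it is also what would fail for a general quasi-shuffle product $\qsh$ (where a $\diamond$-merger of the first letters of $w$ and $v$ can appear as a new first letter), consistent with the fact that these particular derivations are stated only for the shuffle algebra.
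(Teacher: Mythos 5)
Your proposal is correct and matches the paper's (one-line) proof in spirit: the paper simply asserts the claim follows by induction on word length, and your unfolding of the shuffle recursion $w_1w'\shuffle v_1v' = w_1(w'\shuffle v_1v')+v_1(w_1w'\shuffle v')$ is exactly the inductive step, here even yielding the Leibniz rule for $\dlefta$ in a single step since every summand of $w_1(w'\shuffle v)$ begins with $w_1$. Your treatment of $\drighta$ by conjugating $\dlefta$ with the reversal automorphism $\rho$ is a clean way to avoid repeating the argument from the right-hand end, and your closing remark correctly identifies why the statement is restricted to $\shuffle$ (a $\diamond$-merged letter can occupy an extremity for a general quasi-shuffle, which is precisely what Corollary~\ref{cor:removeletterderqsh} corrects for).
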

\begin{proof}
Can be shown easily by induction on the length of a word.
\end{proof}

\begin{corollary}\label{cor:removeletterderqsh}
For any $a\in \LL$ the linear maps
\begin{align*}
   \dlefta_\diamond:\QL, &\longrightarrow \QL,\\
    a_1 \cdots a_r &\longmapsto \sum_{1\leq l\leq r} \ind_{a_1 \diamond \cdots \diamond a_l=a} \frac{(-1)^{l+1}}{l} a_{l+1} \cdots a_r, 
\end{align*}
\begin{align*}
   \drighta_\diamond:\QL, &\longrightarrow \QL,\\
    a_1 \cdots a_r &\longmapsto \sum_{1\leq l\leq r} \ind_{a_{r-l+1} \diamond \cdots \diamond a_r=a} \frac{(-1)^{l+1}}{l} a_{1} \cdots a_{r-l}, 
\end{align*}
are derivations with respect to the quasi-shuffle product $\qsh$.
\end{corollary}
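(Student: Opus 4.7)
The plan is to invoke the transfer principle stated immediately after equation~\eqref{eq:deflog}: for any derivation $\Theta$ on $(\QL,\shuffle)$, the conjugate $\Theta_\diamond = \exp_\diamond \circ\, \Theta \circ \log_\diamond$ is automatically a derivation on $(\QL,\qsh)$. Since Proposition~\ref{prop:daderivationshuffle} tells us that $\dlefta$ and $\drighta$ are derivations of the shuffle algebra, the derivation property of $\dlefta_\diamond$ and $\drighta_\diamond$ is immediate. What remains is to verify that the explicit formulas displayed in the statement genuinely compute these compositions.

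The key computational ingredient is a ``first block'' recursion for $\log_\diamond$. Factoring the defining sum in~\eqref{eq:deflog} according to the size $j$ of the initial block $a_1\diamond\cdots\diamond a_j$ and bookkeeping the sign $(-1)^{r-l} = (-1)^{j-1}\cdot(-1)^{(r-j)-(l-1)}$, one obtains
\begin{equation*}
\log_\diamond(a_1\cdots a_r) \= \sum_{j=1}^{r} \frac{(-1)^{j-1}}{j}\,(a_1\diamond\cdots\diamond a_j)\cdot \log_\diamond(a_{j+1}\cdots a_r),
\end{equation*}
with the convention that $\log_\diamond$ of the empty word equals $1$. An entirely analogous manipulation, grouping by the size of the final block, yields
\begin{equation*}
\log_\diamond(a_1\cdots a_r) \= \sum_{j=1}^{r} \frac{(-1)^{j-1}}{j}\,\log_\diamond(a_{1}\cdots a_{r-j})\cdot(a_{r-j+1}\diamond\cdots\diamond a_r).
\end{equation*}

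With these two recursions in hand, the verification becomes essentially automatic. Applying $\dlefta$ to the first identity kills every summand whose leading letter $a_1\diamond\cdots\diamond a_j$ is not equal to $a$, and strips that letter off in the remaining summands, so
\begin{equation*}
\dlefta(\log_\diamond(a_1\cdots a_r)) \= \sum_{j=1}^{r} \frac{(-1)^{j-1}}{j}\,\ind_{a_1\diamond\cdots\diamond a_j=a}\;\log_\diamond(a_{j+1}\cdots a_r).
\end{equation*}
Since $\exp_\diamond$ is linear and inverse to $\log_\diamond$, applying it term-by-term yields exactly the formula claimed for $\dlefta_\diamond$. The $\drighta$ case is identical, using the second recursion.

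The only step requiring genuine effort is the derivation of the first- and last-block recursions for $\log_\diamond$; everything else is bookkeeping. These recursions are straightforward rearrangements of the defining series, but the sign accounting has to be performed carefully. Once the recursions are established, the formulas fall out immediately, and combining them with the general transfer principle finishes the proof.
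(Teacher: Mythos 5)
Your proof is correct and follows essentially the same route as the paper, which likewise deduces the corollary from Proposition~\ref{prop:daderivationshuffle} by conjugating with $\log_\diamond$ and $\exp_\diamond$. The only difference is that you additionally spell out, via the first- and last-block recursions for $\log_\diamond$, why the conjugated maps are given by the displayed explicit formulas --- a detail the paper leaves implicit, and your sign bookkeeping there is accurate.
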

\begin{proof}
This is a direct consequence of Proposition~\ref{prop:daderivationshuffle} by applying the isomorphisms $\log_\diamond : (\QL,\qsh) \to (\QL,\shuffle)$ and its inverse $\exp_\diamond$.
\end{proof}

\begin{proposition}\label{prop:phideriv} Let $a \in \LL$ and let $\varphi : \Q\LL \rightarrow \Q\LL$ be a linear map. Then the linear map 
\begin{align*}
    \dphia:\QL &\longrightarrow \QL,\\
    a_1 \cdots a_r &\longmapsto \sum_{j=1}^{r-1} \ind_{a_j=a}\, a_1\cdots a_{j-1} \varphi(a_{j+1}) \cdots a_r - \sum_{j=2}^r \ind_{a_j=a}\, a_1 \cdots \varphi(a_{j-1}) a_{j+1} \cdots a_r 
\end{align*}
is a derivation with respect to the shuffle product $\shuffle$.
\end{proposition}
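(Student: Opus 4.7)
The first step is to rewrite $\dphia$ in a symmetric ``window'' form that makes the derivation property conceptually transparent. Reindexing the second sum in the definition by $k = j-1$, one checks that
\[
\dphia(a_1\cdots a_r) \;=\; \sum_{k=1}^{r-1} a_1\cdots a_{k-1}\,\psi(a_k,a_{k+1})\,a_{k+2}\cdots a_r,
\]
where $\psi:\Q\LL\otimes\Q\LL\to \Q\LL$ is the bilinear map determined on letters by
\[
\psi(x,y) \;:=\; \ind_{x=a}\,\varphi(y) \;-\; \ind_{y=a}\,\varphi(x).
\]
The crucial observation is that $\psi$ is \emph{antisymmetric}: $\psi(x,y)+\psi(y,x)=0$ for all $x,y\in\LL$.

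I would then prove the following general fact, of which the proposition is an immediate consequence: for any antisymmetric bilinear map $\psi$, the window operator $\tilde\Theta^\psi$ defined by the displayed formula above is a derivation of $(\QL,\shuffle)$. The proof exploits the interleaving description of the shuffle product. For $u=u_1\cdots u_m$ and $v=v_1\cdots v_n$, the words in $u\shuffle v$ are parametrized by size-$m$ subsets $I\subset\{1,\ldots,m+n\}$ indicating the positions occupied by the letters of $u$ in order, and every term of $\tilde\Theta^\psi(u\shuffle v)$ is indexed by a pair $(I,k)$ with $1\leq k\leq m+n-1$. According to whether positions $k$ and $k+1$ of the interleaving both lie in $I$, both lie outside $I$, or are mixed, contributions split into four classes. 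The two ``pure'' classes precisely reproduce $\tilde\Theta^\psi(u)\shuffle v$ and $u\shuffle \tilde\Theta^\psi(v)$ respectively. For the two ``mixed'' classes, the involution $I\mapsto I\triangle\{k,k+1\}$ pairs interleavings off: the partner interleaving has positions $k,k+1$ occupied by $v_j,u_i$ instead of $u_i,v_j$, with the same surrounding letters, so the sum of the two contributions is $\cdots(\psi(u_i,v_j)+\psi(v_j,u_i))\cdots=0$ by antisymmetry.

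The main obstacle is purely combinatorial bookkeeping: one needs to verify that every term of $\tilde\Theta^\psi(u\shuffle v)$, $\tilde\Theta^\psi(u)\shuffle v$, and $u\shuffle \tilde\Theta^\psi(v)$ is accounted for exactly once under the above classification, and that the mixed-pair swap is indeed an involution preserving everything but the two adjacent letters. An alternative approach is a direct induction on $|u|+|v|$ using the recursion
\[
\dphia(c\cdot w) \;=\; c\cdot \dphia(w) \;+\; \ind_{c=a}\,P(w) \;-\; \varphi(c)\cdot \dlefta(w),
\]
where $P$ applies $\varphi$ to the leading letter of $w$; the $P$- and $\varphi(c)\dlefta$-terms cancel in pairs on both sides of the derivation identity, using that $\dlefta$ is already known to be a derivation (Proposition~\ref{prop:daderivationshuffle}). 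The window/antisymmetry approach is preferable, however, as it also explains conceptually why neither of the two sums defining $\dphia$ is separately a derivation, while their antisymmetric difference is.
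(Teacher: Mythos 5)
Your proof is correct and follows essentially the same route as the paper's: both expand $w\shuffle v$ into interleavings, identify the contributions where the two relevant adjacent positions come from the same factor with $\dphia(w)\shuffle v$ and $w\shuffle\dphia(v)$, and cancel the mixed contributions by swapping the two adjacent letters between interleavings. Your repackaging of $\dphia$ as a window operator built from the antisymmetric bilinear map $\psi$ is a clean way to organize the same cancellation, but it is not a genuinely different argument.
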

\begin{proof}
Similar as in the proof of Lemma~\ref{lem:thetaphideriv}, for words $w,v \in \QL$ write
\begin{align*}
    w \shuffle v = w_1 \cdots w_r \shuffle v_1 \cdots v_s = \sum_{u=u_1 \cdots u_{r+s}} \binom{u}{w,v}\, u \,
\end{align*}
for some non-negative integers $\binom{u}{w,v}$.
Setting $W=\{w_1,\dots,w_r\}$ and $V=\{v_1,\dots,v_s\}$ we have $u_j \in W \cup V$. Assume we have a word $u$ in the sum on the right with $u_j=a$ and $u_j \in W$. If $u_{j+1} \in V$, then there also exists a word $u'$ in the shuffle product
which equals $u$ but with $u_{j+1}$ and $u_{j}$ interchanged and $\binom{u}{w,v} = \binom{u'}{w,v}$. In this case the terms in $\dphia$ cancel out. Therefore, the only contribution to $\dphia(w \shuffle v)$ with $u_j=a$ comes from those $u$, where $u_j,u_{j+1} \in W$ or $u_j,u_{j+1} \in V$ (similarly $u_j,u_{j-1} \in W$ or $u_j,u_{j-1} \in V$). But these terms come exactly from the terms in $\dphia(w) \shuffle v$ and $w \shuffle \dphia(v)$ respectively.
\end{proof}

Later we will need the following generalization of Proposition~\ref{prop:phideriv}. 
\begin{lemma}
 \label{lem:dphiaonqsh} Let $(\QL,\qsh)$ be a quasi-shuffle algebra and $S\subset \LL$ such that $\im(\diamond) \subset \Q \LL \backslash S$. For each $a\in S$ let $\varphi_a:\Q\LL\to\Q\LL$ be a linear map such that  $\varphi_a(b \diamond c)=\varphi_a(b) \diamond c$ for any $b,c \in \LL$ with $b\not\in S$ and $\varphi_a(a'\diamond c) = \varphi_{a'}(a\diamond c)$ for all $a,a'\in S$ and $c\in \LL$. Then the map $\Theta_S=\sum_{a\in S}\Theta^{\varphi_a,a}$ is a derivation on $(\QL,\qsh)$.
\end{lemma}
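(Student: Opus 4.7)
To prove that $\Theta_S$ is a derivation on $(\QL,\qsh)$, I would establish the Leibniz identity $\Theta_S(w\qsh v) = \Theta_S(w)\qsh v + w\qsh\Theta_S(v)$ by induction on the combined length $|w|+|v|$, in the spirit of Proposition~\ref{prop:phideriv} and Lemma~\ref{lem:thetaphideriv}. The base case, in which one of $w,v$ is empty, is immediate. As a preliminary reformulation, I would rewrite $\Theta_S$ in a ``pair-contribution'' form by introducing the antisymmetric bilinear pairing
\[
P(x,y) \df \ind_{x\in S}\,\varphi_x(y) \,-\, \ind_{y\in S}\,\varphi_y(x),
\]
extended bilinearly to $\Q\LL\times\Q\LL$. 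A shift of summation index then yields
\[
\Theta_S(u_1\cdots u_t) \= \sum_{j=1}^{t-1} u_1\cdots u_{j-1}\,P(u_j,u_{j+1})\,u_{j+2}\cdots u_t,
\]
from which one obtains the ``head'' recursion $\Theta_S(c\,u_1\cdots u_t) = c\cdot\Theta_S(u_1\cdots u_t) + P(c,u_1)\,u_2\cdots u_t$ for any letter $c$. Note also that $P(y,x) = -P(x,y)$.

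For the inductive step I would write $w = aw'$, $v = bv'$ and expand via the defining recursion $w \qsh v = a(w'\qsh v) + b(w\qsh v') + (a\diamond b)(w'\qsh v')$. Applying the head recursion to each summand, then the inductive hypothesis to the three tails $w'\qsh v$, $w\qsh v'$, $w'\qsh v'$, produces two kinds of contributions: ``interior'' pieces of the shape $a\cdot(X\qsh Y)$, $b\cdot(X\qsh Y)$, $(a\diamond b)\cdot(X\qsh Y)$, and ``boundary'' pieces of the form $P(a,\cdot)$, $P(b,\cdot)$, $P(a\diamond b,\cdot)$ evaluated on the heads of the tail quasi-shuffles. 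Because $a\diamond b\notin S$ by the hypothesis $\im\diamond\subset\Q\LL\setminus S$, we have $P(a\diamond b,x) = -\ind_{x\in S}\varphi_x(a\diamond b)$. One application of the $\qsh$-recursion to the right-hand side $\Theta_S(w)\qsh v + w\qsh\Theta_S(v)$ matches all interior contributions, and the purely mixed head-swap terms $P(a,b)+P(b,a)$ on the left-hand side cancel by the antisymmetry of $P$.

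What remains is a finite collection of ``boundary matching'' identities of the form
\[
\ind_{a\in S}\bigl(\varphi_a(b\diamond c) - \varphi_a(b)\diamond c\bigr) \= \ind_{b\in S}\bigl(\varphi_b(a\diamond c) - \varphi_b(a)\diamond c\bigr)
\]
for $a,b,c\in\LL$, together with its $w\leftrightarrow v$-mirror. Verifying this matching is the main obstacle, and it is precisely what the two hypotheses on $(\varphi_a)_{a\in S}$ are designed for: the first compatibility $\varphi_a(b'\diamond c) = \varphi_a(b')\diamond c$ (for $b'\notin S$), combined with the commutativity of $\diamond$ on $\LL$, makes both sides vanish whenever at least one of the two letters in the $\diamond$-pair lies outside $S$; and in the remaining case where both letters lie in $S$, the symmetry hypothesis $\varphi_a(a'\diamond c) = \varphi_{a'}(a\diamond c)$ is exactly what is needed to transport $\varphi_a(b\diamond c)$ to $\varphi_b(a\diamond c)$ (and, together with the first compatibility applied via commutativity of $\diamond$, to identify the two $\diamond$-summands). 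Once these identities are in place, the Leibniz rule closes, completing the induction.
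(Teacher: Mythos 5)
Your reorganization of $\Theta_S$ as a sum over adjacent pairs of the antisymmetric pairing $P$, together with the head recursion and an induction on length, is a legitimate and in fact tidier packaging than the paper's argument, which instead matches terms directly in the quasi-shuffle expansion of $w\qsh v$ by a case analysis on whether the neighbour of an $S$-letter lies in $W\cup V$ or in $W\diamond V$. Both routes reduce the Leibniz rule to the same local compatibility, and your ``boundary matching identity'' is exactly the right reduction.

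The gap is in your treatment of that identity when both letters of the pair lie in $S$. For $a,a'\in S$ and $c\in\LL$ it reads $\varphi_a(a'\diamond c)-\varphi_a(a')\diamond c=\varphi_{a'}(a\diamond c)-\varphi_{a'}(a)\diamond c$; the hypothesis $\varphi_a(a'\diamond c)=\varphi_{a'}(a\diamond c)$ removes the first summands, but what is left is $\varphi_a(a')\diamond c=\varphi_{a'}(a)\diamond c$, and your appeal to ``the first compatibility applied via commutativity of $\diamond$'' does not prove this: that compatibility is only assumed for $\varphi_a(b\diamond c)$ with $b\notin S$, and it constrains $\varphi_a$ only on $\im(\diamond)$ and on letters outside $S$, never the value $\varphi_a(a')$ itself. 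The failure is genuine rather than presentational: taking $w=aa'$ with $a,a'\in S$ and $v=c$ a single letter, one computes $\Theta_S(w\qsh v)-\Theta_S(w)\qsh v-w\qsh\Theta_S(v)=-\bigl(\varphi_a(a')-\varphi_{a'}(a)\bigr)\diamond c$, so the Leibniz rule forces $\bigl(\varphi_a(a')-\varphi_{a'}(a)\bigr)\diamond c=0$, which the stated hypotheses do not imply (one can build explicit $\varphi_a$ satisfying both hypotheses for which this is nonzero). To close the argument you must add, or verify in each application, the symmetry $\varphi_a(a')\diamond c=\varphi_{a'}(a)\diamond c$ for $a,a'\in S$; it does hold in both uses of the lemma in the paper, since for $\omega$ one has $\varphi_a(a')=\varphi_{a'}(a)=\ai{1}{d+d'}$ and for $\delta^4$ both values vanish. (The paper's own proof passes over the same point with ``similar contributions in $\Theta_S(w)\qsh v$ cancel by the same argument'', so you are in good company, but the step still needs to be justified.)
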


\begin{proof}
Notice that if $S=\{a\}$ and $\varphi(b \diamond c)=\varphi(b) \diamond c$ for all $b,c \in \LL$ then the statement follows immediately from Proposition~\ref{prop:phideriv} since $\dphia = \exp_\diamond \circ\, \dphia \circ \log_\diamond$. 
For general $S$, we proceed as in the proof of Lemma~\ref{lem:thetaphideriv}: We have
\begin{align*}
    w \qsh v = w_1 \cdots w_r \qsh v_1 \cdots v_s =  \sum_{\substack{u=u_1 \cdots u_{t}\\ \max(r,s) \leq t \leq r+s}} \binom{u}{w,v}_{\!\!\diamond}\, u \,
\end{align*}
for some non-negative integers $\binom{u}{w,v}_{\!\!\diamond}$ and $u_j \in W \cup V\cup(W\diamond V)$. Suppose $u_j=a$ for some $a\in S$. Moreover, without loss of generality, assume $u_j=a\in W$. We distinguish two possibilities for~$u_{j+1}\mspace{1mu}$. First, notice that we can treat the case $u_{j+1} \in V \cup W$ in a similar way as in the proof of Proposition \ref{prop:phideriv}. Next, we suppose $u_{j+1} = w_m \diamond v_n\mspace{1mu}$ and again distinguish two cases.  First, if $w_m\not \in S$, the corresponding term in $\dphia(w \qsh v)$ also appears in $\dphia(w) \qsh v$ with the same multiplicity as $\varphi_a(w_m\diamond v_n) = \varphi_a(w_m)\diamond v_n\mspace{1mu}$. Next, if $w_m=a'$ for some $a'\in S$, then $w$ contains two elements of $S$ consecutively, i.e., $u_j=w_{m-1}=a$ and $w_m=a'$ for some $a,a'\in S$. Therefore, the same term with opposite sign (and the same multiplicity) occurs for $u_{j+1}=a'$ and $u_j=a\diamond v_n\mspace{1mu}$, since $\varphi_a(a'\diamond v_n) = \varphi_{a'}(a \diamond v_n)$. Similar contributions in $\Theta_S(w) \qsh v$ cancel by the same argument. 
\end{proof}

\section{Derivations on \texorpdfstring{$\fmes$}{FMES}}
In this section, we consider derivations on the space~$\fmes$. All the derivations we consider come from derivations on $(\QA,\ast)$. 
Recall that we defined the alphabet $\A$ by
\begin{align*}
\A = \left\{ \ai{k}{d} \mid k \geq 1,\,d \geq 0 \right\} \,.
\end{align*}
The stuffle product $\ast$ defined in Definition~\ref{def:stuffleproduct} is then the quasi-shuffle product $\qsh$ associated to the product $\diamond$ on $\Q \A$ defined for $k_1, k_2 \geq 1$ and $d_1,d_2 \geq 0$ by 
\begin{align*}
    \ai{k_1}{d_1} \diamond \ai{k_2}{d_2} = 	\ai{k_1+k_2}{d_1+d_2}\,.
\end{align*}
We often use the results for derivations on quasi-shuffle algebras from Section~\ref{sec:derivationsforqsh}. 

Note that a derivation~$\Theta$ on $(\QA,\ast)$ restricts to $\fmes$ if and only if $[\Theta,\sigma] \QA \subset \I $. Namely, a derivation~$\Theta$ restricts to $\fmes$ if and only $\Theta\I\subset \I$. An element of $\mathcal{I}$ can be written as $(\sigma(w)-w)u$ for some words $w,u$. Now, we have
\[ \Theta((\sigma(w)-w)u) = ([\Theta,\sigma]w+\sigma(dw)-dw)u+(\sigma(w)-w)du \equiv ([\Theta,\sigma]w)u \mod \mathcal{I},\]
from which we conclude that $\Theta\I \subset \I$ if and only if $[\Theta,\sigma] \QA \subset \I $. In what follows we consider \emph{$\sigma$-equivariant derivations}, i.e., $[\Theta,\sigma]=0$.  In particular, every $\sigma$-equivariant derivation restricts to a derivation on $\fmes$.

\subsection{A derivation of weight \texorpdfstring{$-1$}{-1}}
Recall that any endomorphism of $\B$ defines an endomorphism on $\QA$ (see Definition~\ref{obs:bimould}). Therefore, for all $j=1,\ldots,r$ we define the following maps on $r$-tuples
\begin{align}
 p_j(X_1,\ldots,X_r) &= (X_1,\ldots,X_{j-1},X_{j+1},\ldots,X_r) \\
c_{j,j+1}(Y_1,\ldots,Y_r) &= (Y_1,\ldots,Y_{j-1},Y_j+Y_{j+1},Y_{j+2},\ldots,Y_r),
\end{align}
where we set $Y_{r+1}=0$. Note that $p_j$ removes the $j$th element and $c_{j,j+1}$ contracts the $j$th and $j+1$th element. As a special case, $c_{r,r+1}(Y_1,\ldots,Y_r)=(Y_1,\ldots,Y_{r-1})$. We abbreviate $\X=(X_1,\ldots,X_r)$ and similarly we define $\Y$. 
Then, define $\varphi_j^{+},\varphi_j^{-}:\B\to\B$ by
\begin{align} \varphi_j^{+} f_r(\X,\Y) \df 
\begin{cases}
f_{r-1}(p_j(\X),c_{j,j+1}(\Y)) & 1\leq j\leq r \\
0 & \text{else.}
\end{cases}\\
 \varphi_j^{-} f_r(\X,\Y) \df 
\begin{cases}
f_{r-1}(p_j(\X),c_{j-1,j}(\Y)) & 2\leq j\leq r \\
0 & \text{else.}
\end{cases}
\end{align}
and define $\omega:\B\to \B$ by
\[ \omega \df  \sum_{j\geq 1}(\varphi_j^+-\varphi_j^-). \]
This induces an endomorphism $\omega : \QA \rightarrow \QA$.

\begin{proposition}
The map $\omega$ is a $\sigma$-equivariant derivation on $(\QA,\ast)$.
\end{proposition}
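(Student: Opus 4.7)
The plan is to establish the derivation property and the $\sigma$-equivariance separately.

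For the derivation property, I would first unpack Definition~\ref{obs:bimould} to describe $\omega$ combinatorially on a word $\ai{k_1,\ldots,k_r}{d_1,\ldots,d_r}$: each $\varphi_j^\pm$ is nonzero only when $k_j = 1$, in which case it removes the letter $\ai{1}{d_j}$ at position $j$ and transfers its weight $d_j$ to the right ($+$) or left ($-$) neighbour; at the boundary $j=r$ the two operators coincide when $d_r=0$ and cancel in the difference. From this I would read off the decomposition
\[
\omega \= \sum_{a \in S}\Theta^{\varphi_a, a}\;+\;\drighta_\diamond \qquad (\text{with }a = \ai{1}{0}),
\]
where $S = \{\ai{1}{c} \mid c \geq 0\}$ and $\varphi_a \colon \Q\A \to \Q\A$ is the $d$-shift $\ai{k}{d}\mapsto\ai{k}{d+c}$ whenever $a = \ai{1}{c}$. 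The hypotheses of Lemma~\ref{lem:dphiaonqsh} are immediate: $\im(\diamond) = \{\ai{k}{d}\mid k\geq 2\}$ is disjoint from $S$, and both compatibility identities for $\varphi_a$ reduce to the additivity of the $d$-shift. Combined with Corollary~\ref{cor:removeletterderqsh} applied to $a = \ai{1}{0}$, this presents $\omega$ as a sum of two derivations on $(\QA,\ast)$, hence itself a derivation.

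For $\sigma$-equivariance I would prove the stronger relations $\sigma\varphi_j^+ = \varphi_{r-j+1}^+\sigma$ and $\sigma\varphi_j^- = \varphi_{r-j+2}^-\sigma$ on $\B$; the desired identity $\sigma\omega = \omega\sigma$ then follows by summing and reindexing. The key inputs are two telescoping identities for the $\sigma$-substitution: using $Y^\sigma_j = X_{r-j+1}-X_{r-j+2}$ one computes $Y^\sigma_j + Y^\sigma_{j+1} = X_{r-j}-X_{r-j+2}$, so contracting positions $j,j+1$ of $\Y^\sigma$ is equivalent to first deleting $X_{r-j+1}$ from $\X$ and then applying~$\sigma$; and an analogous partial-sum calculation shows deleting $X^\sigma_j$ from $\X^\sigma$ is equivalent to first merging $Y_{r-j+1}$ and $Y_{r-j+2}$ in $\Y$. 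Together these yield, for any $f \in \B$,
\[
f_{r-1}\bigl(p_j(\X^\sigma),\,c_{j,j+1}(\Y^\sigma)\bigr) \= (\sigma f)_{r-1}\bigl(p_{r-j+1}(\X),\,c_{r-j+1,r-j+2}(\Y)\bigr),
\]
which is exactly $\sigma\varphi_j^+ = \varphi_{r-j+1}^+\sigma$ evaluated at $(\X,\Y)$; the $\varphi^-$ identity follows by the same argument using the telescope $Y^\sigma_{j-1}+Y^\sigma_j = X_{r-j+1}-X_{r-j+3}$. Since the reindexings $j \mapsto r+1-j$ and $j \mapsto r+2-j$ are bijections on the ranges $\{1,\ldots,r\}$ and $\{2,\ldots,r\}$ of $\varphi^+$ and $\varphi^-$ respectively, summing and differencing gives $\sigma\omega = \omega\sigma$.

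The main obstacle is the telescoping bookkeeping in the $\sigma$-equivariance step, especially aligning the index reversals with the asymmetric summation ranges of $\varphi^+$ and $\varphi^-$ and with the special convention $c_{r,r+1}(Y_1,\ldots,Y_r) = (Y_1,\ldots,Y_{r-1})$ at the upper boundary. The derivation property, by contrast, is comparatively immediate once the decomposition is identified, thanks to Section~\ref{sec:derivationsforqsh}.
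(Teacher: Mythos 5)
Your proof is correct and follows essentially the same route as the paper: the derivation property is obtained from the identical decomposition of $\omega$ into the derivation $\drighta_\diamond$ of Corollary~\ref{cor:removeletterderqsh} with $a=\ai{1}{0}$ plus the derivation $\Theta_S$ of Lemma~\ref{lem:dphiaonqsh} with $S=\{\ai{1}{d}\mid d\geq 0\}$ and the $d$-shifts $\varphi_a$, and the $\sigma$-equivariance from the same commutation relations $\varphi_j^{+}\sigma=\sigma\varphi_{r-j+1}^{+}$, $\varphi_j^{-}\sigma=\sigma\varphi_{r-j+2}^{-}$ (your version is the same identity after the reindexing $j\mapsto r-j+1$, resp.\ $j\mapsto r-j+2$). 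Your telescoping verification of these relations and the boundary bookkeeping at $j=r$ are both accurate.
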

\begin{proof}

For the equivariance, we compute
\begin{align*}
 \varphi_j^{+} \sigma f_r(\X,\Y) &= \varphi_j^{+}f_r(Y_1 + \dots + Y_r,\dots,Y_1+Y_2,Y_1,X_r,X_{r-1}-X_r,\dots,X_1-X_2) \\
&= \sigma f_{r-1}(X_1,\ldots,X_{r-j},X_{r-j+2},\ldots,X_r,Y_1,\ldots,Y_{r-j},Y_{r-j+1}+Y_{r-j+2},\ldots,Y_r) \\
&= \sigma \varphi_{r-j+1}^{+} f_r(\X,\Y).
\end{align*}
Hence, 
\begin{align}\label{eq:phiandsigma}\varphi_j^{+} \,\sigma = \sigma\,\varphi_{r-j+1}^+ \qquad \text{and similarly} \qquad \varphi_j^{-} \,\sigma = \sigma\,\varphi_{r-j+2}^- \,.
\end{align}
These identities directly imply the equivariance.

For the second part, note that $\omega$ acts on words by
\begin{align*}
\omega \ai{k_1,\ldots,k_r}{d_1,\ldots,d_r} &= \ind_{(k_r,d_r)=(1,0)}\ai{k_1,\ldots,k_{r-1}}{d_1,\ldots,d_{r-1}} + 
\sum_{j=1}^{r-1} \ind_{k_j=1} \ai{k_1,\ldots,k_{j-1},k_{j+1},\ldots,k_r}{d_1,\ldots,d_{j-1},d_j+d_{j+1},\ldots,d_r} \\
&\qquad - 
\sum_{j=2}^r \ind_{k_j=1} \ai{k_1,\ldots,k_{j-1},k_{j+1},\ldots,k_r}{d_1,\ldots,d_{j-1}+d_j,d_{j+1},\ldots,d_r}.
\end{align*}
Hence, $\omega$ is a sum of the derivation~$\drighta_\diamond$ in Corollary~\ref{cor:removeletterderqsh} with $a=\ai{1}{0}$ and the derivation~$\dphia$
in Lemma~\ref{lem:dphiaonqsh} with $a=\ai{1}{d}$ and $\varphi\ai{k'}{d'}=\ai{k'}{d+d'}$ for all $d\geq 0$.
\end{proof}

Note that in the special case  $d_1=\dots=d_r$ the derivation $\omega$ is given by 
\[
\omega \ai{k_1,\ldots,k_r}{0,\ldots,0} =
\ind_{k_1=1} \ai{k_2,\ldots,k_r}{0,\ldots,0}.
\]
In particular, $\omega$ is the zero map if the sequence $(k_1,\ldots,k_r)$ is admissible. In the next section, we will see that the derivation $\omega$ allows to define the concept of regularization to all elements in $\fmes$.

\subsection{Derivations and polynomial representations}

\begin{lemma}
\label{lem:fundamentallemma}
Let $A=\bigoplus_{n\geq 0} A_n$ be a graded $\Q$-algebra. Suppose $d$ is a derivation on $A$ such that for some $k>0$ we have $d(A_n)\subset A_{n-k}$ (with $A_{-n}\df \{0\}$ for $n>0$). Assume that there exists an $a\in A$ with $d(a)=1$. Then for all $f \in A$ there exist unique $f_1,\ldots,f_n\in \ker(d)$ such that 
\begin{align}\label{eq:pol}
 f= \sum_{i=0}^n f_i\, a^{i}.\end{align}
 Conversely, assume that for an $a \in A$ and a subalgebra $B\subset A$ there exist for any $f \in A$  unique $f_1,\ldots,f_n\in B$ such that \eqref{eq:pol} holds. Then the map $d \in \operatorname{End}(A)$ defined by 
\begin{align*}
    d(f) = \sum_{i=1}^n i f_i\, a^{i-1}
\end{align*}
is a derivation on $A$ with $d(a)=1$ and $ \ker(d) = B$.
\end{lemma}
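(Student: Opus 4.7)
The plan is to prove both implications by induction on the order of nilpotency of $d$ acting on $f$.

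For the forward direction, since $d$ lowers degree by $k>0$ and $A_{-m}=0$ for $m>0$, for each $f\in A$ the integer $N(f):=\min\{N\geq 0 : d^{N+1}(f)=0\}$ is well-defined. I proceed by induction on $N(f)$. The base case $N(f)=0$ is immediate: $f\in\ker(d)$, so $f=f\cdot a^0$. For the inductive step, a short calculation using $d(a)=1$ together with the Leibniz rule yields $d^j(a^n)=\tfrac{n!}{(n-j)!}a^{n-j}$ for $j\leq n$ and zero for $j>n$. Now set $f_N:=\tfrac{1}{N!}d^N(f)\in\ker(d)$ and $g:=f-f_N a^N$. Applying the iterated Leibniz rule to $f_N a^N$, the only surviving term (since $d(f_N)=0$) is $f_N\cdot d^N(a^N)=N!\,f_N$, giving $d^N(g)=0$. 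Hence $N(g)<N$ and the induction hypothesis produces the desired decomposition of $g$, which together with the $f_N a^N$ summand completes existence. For uniqueness, suppose $\sum_{i=0}^n f_i a^i=0$ with $f_i\in\ker(d)$ and $f_n\neq 0$. Applying $d^n$ and using the same Leibniz argument, together with $d^n(a^i)=0$ for $i<n$, only the top term survives and yields $n!\,f_n=0$, which forces $f_n=0$ since $A$ is a $\Q$-algebra---a contradiction.

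For the converse, uniqueness of the representation makes the formula for $d$ well-defined as a $\Q$-linear endomorphism of $A$. The identity $d(a)=1$ follows from the representation $a=0+1\cdot a$, and $\ker(d)=B$ is obtained by applying uniqueness to $\sum_{i\geq 1}i f_i a^{i-1}=0$, which forces $f_i=0$ for all $i\geq 1$ so that $f=f_0\in B$. The Leibniz rule is then verified by direct expansion: writing $f=\sum_i f_i a^i$ and $g=\sum_j g_j a^j$, and using that $a$ commutes with elements of $B$ (as in the intended applications where $A$ is commutative, so that $\sum_{i+j=k}f_i g_j\in B$ is indeed the unique coefficient of $a^k$ in $fg$), one obtains
\begin{align*}
d(fg)\=\sum_{i,j}(i+j)f_i g_j a^{i+j-1}\=d(f)\,g+f\,d(g).
\end{align*}

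The main technical point is the correct handling of the iterated Leibniz rule in both the existence and uniqueness arguments; these rely on $\Q\subset A$ (to invert $n!$) and on $\ker(d)$ being closed under $d$, which is why all derivative terms involving the $f_i$ drop out. No ideas beyond standard polynomial-ring manipulations are needed.
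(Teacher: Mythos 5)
Your proof is correct and follows essentially the same strategy as the paper's: induction on the nilpotency index $\min\{i\geq 0 \mid d^{i+1}(f)=0\}$ for the forward direction, a downward application of powers of $d$ for uniqueness, and a direct expansion for the converse. The only difference is that you peel off the top-order term $\frac{1}{N!}d^{N}(f)\,a^{N}$, whose coefficient already lies in $\ker(d)$, whereas the paper subtracts $\frac{d(f)}{p}\,a$ and applies the induction hypothesis to both $f-\frac{d(f)}{p}\,a$ and $d(f)$; this is a cosmetic variation, not a different route.
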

\begin{proof}
Let $f\in A$ be given and assume that $d$ is a derivation. We prove the existence of an expansion of the form~\eqref{eq:pol} by induction on the \emph{depth} $p(f)=p=\min\{i\geq 0\mid d^{i+1}(f)=0\}$. Observe that since $d$ a derivation of weight $-k$ such a $p$ always exists. Also, observe that if $p=0$ we can write $f=f_0\mspace{1mu}$. 

Now, suppose the result is proven for elements of depth at most $p-1$ and $f$ is of depth $p$. Then, since $d$ is a derivation,
\[ d^p\Bigl(f-\frac{d(f)}{p}\,a\Bigr) = d^p(f) - \frac{d^p(f)}{p}\,p = 0.\]
Hence, both $f-\frac{d(f)}{p}\,a$ and $d(f)$ are of depth at most $p-1$, and the expansion of the form~\eqref{eq:pol} follows inductively. If $f=\sum_{i=0}^n f_i\, a^i =0$ we obtain $f_i=0$ by considering $d^j(f)$ for $j=n,\dots,0$, from which the uniqueness of the representation \eqref{eq:pol} follows.

For the converse, first of all, observe that by the uniqueness of the representation \eqref{eq:pol} the map $d$ is well-defined. Next, let $f,g\in A$ be given. Assume that both can be written as a polynomial in $a$ with coefficients in $B$, explicitly, 
\[f= \sum_{i} f_i \, a^{i}, \qquad g= \sum_{j} g_j \, a^{ j}\]
with $d(f_i)=0$ and $d(g_i)=0$ for all $i$. The converse statement then follows from the following explicit computation
\begin{align*} d(f\, g) &= d\Bigl(\sum_{i,j} f_i \, g_j \, a^{ i+j}\Bigr) \\
&= \sum_{i,j} (i+j)\, f_i \, g_j \, a^{i+j-1} \\
&= \sum_{i,j} i\, f_i \, g_j \, a^{ i-1}\, a^{ j} + \sum_{i,j} j\, f_i \, g_j \, a^{ i} \, a^{j-1} \\
&= d(f)\, g + f\, d(g). &&\qedhere
\end{align*}
Now, if $f\in B$, then by definition of $d$ we have $f=f_0$ and $d(f)=0$. Conversely, if $f\not \in B$, then $f_i\neq 0$ for some $i>0$ and $d(f)\neq 0$. 
\end{proof}

\begin{example}
Recall $\h=\Q\langle x,y\rangle$ and $\h^1=\Q\oplus \h y$. Recall that $\h^1$ is an algebra with respect to both the shuffle and stuffle product. Now, the map $\delta_y:\h^1\to \h^1$ given by
\[ 
\delta_y(w) = \begin{cases}
    v & \text{if } w=yv \text{ for some word } v\\
    0 & \text{else.}
\end{cases}
\]
is a derivation with respect to both products. Note that $\delta_y(w)=0$ precisely if $w$ is admissible. 
Hence, by Lemma~\ref{lem:fundamentallemma}, it follows that every $w\in \h^1$ can be written as a polynomial in $y$ with admissible coefficients. 
\end{example}

Similarly, on $\fmes$ we obtain:
\begin{corollary}
Every element $f\in\fmes$ can be written as
\[
f = \sum_{i} f_i \, \gf(1)^i
\]
for $f_i \in \fmes$ with $\omega(f_i)=0$. 
\end{corollary}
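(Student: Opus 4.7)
The plan is to apply Lemma~\ref{lem:fundamentallemma} directly, with $A = \fmes$, derivation $d = \omega$, and distinguished element $a = \gf(1)$. Three hypotheses must be checked: that $\fmes$ carries a compatible grading on which $\omega$ has strictly negative degree, that $\omega$ descends from $(\QA,\ast)$ to $\fmes$, and that $\omega(\gf(1)) = 1$.

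First, weight defines a grading $\fmes = \bigoplus_{n\geq 0}\fmes_n$: both the stuffle product (by inspection of Definition~\ref{def:stuffleproduct}) and the swap $\sigma$ (via the explicit formula, whose coefficients $C^{\vec a,\vec k}_{\vec b,\vec d}$ vanish unless $|\vec a|+|\vec b|=|\vec k|+|\vec d|$) preserve weight, so the defining ideal $\I$ is weight-homogeneous and the grading descends. The explicit action of $\omega$ on words then shows that every summand reduces weight by exactly one: the first term strips a trailing $\ai{1}{0}$, while each remaining summand removes a letter $\ai{1}{d_j}$ and transfers $d_j$ to a neighbouring $Y$-slot, giving net change $-(1+d_j)+d_j=-1$. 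Hence $\omega(\fmes_n)\subset\fmes_{n-1}$.

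Next, the preceding proposition establishes that $\omega$ is a $\sigma$-equivariant derivation on $(\QA,\ast)$, so by the observation opening this section (namely, $[\Theta,\sigma]\QA \subset \I$ is equivalent to $\Theta\I\subset\I$), the derivation $\omega$ restricts to $\fmes$. Finally, applying the explicit formula for $\omega$ to the single letter $\ai{1}{0}$, i.e., the case $r=1$, only the leading indicator $\ind_{(k_r,d_r)=(1,0)}$ contributes, returning the empty word; thus $\omega(\gf(1))=1$ in $\fmes$.

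With all three hypotheses in place, Lemma~\ref{lem:fundamentallemma} immediately yields the desired polynomial expansion $f = \sum_i f_i\,\gf(1)^i$ with $f_i \in \ker(\omega)$, and as a bonus the uniqueness of the coefficients $f_i$. I anticipate no substantive obstacle here: the real work lies in the construction of $\omega$ and the verification of its $\sigma$-equivariance, both carried out in the previous proposition, and the corollary is simply a bookkeeping application of the abstract lemma.
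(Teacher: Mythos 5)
Your proposal is correct and follows exactly the route the paper intends: the corollary is stated as an immediate application of Lemma~\ref{lem:fundamentallemma} with $A=\fmes$ graded by weight, $d=\omega$ (of weight $-1$), and $a=\gf(1)$, and your verification of the three hypotheses (homogeneity of the ideal $\I$, descent of the $\sigma$-equivariant derivation $\omega$ to the quotient, and $\omega(\gf(1))=1$) is accurate. The paper leaves these checks implicit, so your write-up is simply a more detailed version of the same argument.
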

\begin{remark}
It is natural to say that an index $\ai{k_1,\ldots,k_r}{d_1,\ldots,d_r}$ is \emph{admissible} precisely if $\omega \gf\ai{k_1,\ldots,k_r}{d_1,\ldots,d_r}=0$. In the case $d_1=\ldots=d_r=0$ this notion of admissibility coincides with the usual one, i.e. $k_1\geq 2$. 
\end{remark}

\subsection{\texorpdfstring{$\sltwo$}{sl2}-algebras}\label{sec:sl2algebras}

An algebra $A$ is called an \emph{$\sltwo$-algebra} if 
there exists a Lie algebra homomorphism $\sltwo \rightarrow \operatorname{Der}(A)$. More explicitly, there exist three derivations $D,W,\deer \in \operatorname{Der}(A)$ such that $(D,W,\deer)$ forms an \emph{$\sltwo$-triple}. This means that the satisfy the commutator relations
\begin{align}\label{eq:commutationsrelations}
    [W,D]=2D, \quad [W,\deer]=-2\deer,\quad [\deer,D]=W\,.
\end{align}

As the main example of an $\sltwo$-algebra, we now discuss the algebra of quasimodular forms. Let~$\HH$ be the \emph{upper half plane}, i.e., the set of complex numbers~$\tau$ for which \mbox{$\Im(\tau)>0$}. The group~$\Gamma:=\sltwoz$ acts on~$\HH$ by \emph{M\"obius transformations}. Given $\gamma=\left(\begin{smallmatrix} a & b \\ c & d\end{smallmatrix}\right)\in\Gamma=\sltwoz$, $\phi:\HH\to \C$, $\tau\in\HH$ and $k\in \Z$, we let $\gamma\tau=\frac{a\tau+b}{c\tau+d}$ and define the \emph{slash operator} in weight~$k$ by
\[ (\phi|_k \gamma)(\tau) := (c\tau+d)^{-k} \phi(\gamma\tau).\]
A modular form of weight~$k$ is a holomorphic function $\phi:\HH\to\C$ such that $\phi|_k\gamma=\phi$ for all $\gamma\in \Gamma$ and such that $\phi$ admits a holomorphic $q$-expansion around infinity $(q=e^{2\pi \mathrm{i} \tau})$, i.e., 
\[ \phi = \sum_{n\geq 0} a_n\, q^n \qquad \text{for some } a_n\in \C.\]
In order to obtain \emph{modular forms} Eisenstein considered the series
\[ \frac{1}{2}\sum_{\substack{\omega \in \Z\tau+\Z\\ \omega\neq 0}} \frac{1}{\omega^k} \qquad (k\geq 3).\]
These \emph{Eisenstein series} are non-zero modular forms of weight~$k$ whenever~$k$ is even---equal to $\mathbb{G}_k$ in~\eqref{eq:eis}. In case $k=2$, we write $\omega=m\tau+n$ and replace the sum over~$\omega$ by $\sum_{m} \sum_{n}\mspace{1mu}$, where $(m,n)\neq (0,0)$. Then, the resulting series equal $\mathbb{G}_2$ and
\[(\mathbb{G}_2|_2\gamma)(\tau) = \mathbb{G}_2 - \pi \mathrm{i} \frac{c}{c\tau+d}\,.\]
Hence, $\mathbb{G}_2$ is not a modular form. 

In fact, the transformation of $\mathbb{G}_2$ leads us to the definition of a \emph{quasi}modular form, which are functions satisfying a more general modular transformation property than the Eisenstein series and of which~$\mathbb{G}_2$ is the first non-modular example. 
\begin{definition} A \textit{quasimodular form} of \textit{weight}~$k$ and \textit{depth at most~$p$} for~$\Gamma$ is a function $\phi:\HH\to \C$ such that 
\begin{enumerate}[{\upshape (i)}]
\item there exist~$\phi_0,\ldots, \phi_p$ so that for all~$\tau\in \HH$ and all~$\gamma = \left(\begin{smallmatrix} a & b \\ c & d\end{smallmatrix}\right) \in \Gamma$ one has
\begin{align}\label{eq:modtrans}(\phi|_k\gamma)(\tau) = \phi_0(\tau) +\phi_1(\tau)\frac{c}{c\tau+d}+\ldots+\phi_p(\tau)\Bigl(\frac{c}{c\tau+d}\Bigr)^{\!p}.\end{align}
\item $\phi$ admits a holomorphic $q$-expansion around infinity $(q=e^{2\pi \mathrm{i} \tau})$, i.e., 
$\phi = \sum_{n\geq 0} a_n q^n$ for some $a_n\in \C.$
Similarly, $\phi_0,\ldots,\phi_p$ are required to admit such an expansion. 
\end{enumerate}
\end{definition}
Note that if~$\phi$ is a quasimodular form, the functions~$\phi_0,\ldots, \phi_p$ are quasimodular forms uniquely determined by~$\phi$ (the function~$\phi_r$ has weight~$k-2r$ and depth~$\leq p-r$)~\cite{Zag08}. Quasimodular forms of depth~$0$ are modular forms. The Eisenstein series~$\mathbb{G}_2$ is an example of a quasimodular form of weight $2$ and depth $1$. 

The differential operator
\[D:=\frac{1}{2\pi\mathrm{i}} \frac{\mathrm{d}}{\mathrm{d}\tau} = q\frac{\mathrm{d}}{\mathrm{d}q}\]
preserves the space of quasimodular forms (cf., \eqref{eq:classicalgkrelations}. Recall the $G_k$ and $\mathbb{G}_k$ differ by a constant, as defined by \eqref{eq:Gk}.).  Besides~$D$, an important differential operator on quasimodular forms is the operator~$\delta$ defined by~$\phi\mapsto 2\pi \mathrm{i}\,\phi_1$ (with~$\phi_1$ defined by the quasimodular transformation property~(\ref{eq:modtrans})). Then, ${\delta G_2=-\frac{1}{2}}$ and, in fact, this property together with the fact that~$\delta$ annihilates modular forms defines~$\delta$ completely. Lastly, one quasimodular forms one defines $W$ as the diagonal operator multiplying a form $\phi$ by its weight. The algebra of quasimodular forms is an $\sltwo$-algebra with respect to these three operators. 

A polynomial in the derivatives of two modular forms may be  modular~\cite{Ran56}. This is the case for the \emph{Rankin--Cohen brackets} of modular forms~$f$ and~$g$ of weight~$k$ and~$\ell$ respectively, defined by~\cite{Coh75}
\begin{align}\label{eq:rcb} [f,g]_n := \sum_{\substack{r,s\geq 0\\r+s=n}}(-1)^r\,\binom{k+n-1}{s}\,\binom{l+n-1}{r}\,D^r\!f\,D^s\!g \quad \quad (n\geq 0).\end{align}
In fact, one can show that for every $\sltwo$-algebra~$A$, the Rankin--Cohen bracket of $f,g\in \ker \delta$ satisfies
\[ \delta [f,g]_n = 0.\]
Here, the Rankin--Cohen bracket $[f,g]_n$ is defined by the same formula~\eqref{eq:rcb}, where, now, $D$ is the derivation of weight~$2$ on $A$.

\begin{example}\label{ex:atildesl2act}
We now give another example of two $\sltwo$-algebras closely related to the $\sltwo$-action defined on $\fmes$ in the next section. For this, we extend our alphabet $\mathcal{A}$ to the set
\begin{align*}
\tA &\df \left\{ \ai{k}{d} \mid k,d \in \Z \right\} \,.
\end{align*}
On $\tA$, we define $\diamond$ as before, i.e., for $k_1,k_2,d_1,d_2\in \Z$ set
\begin{align*}
    \ai{k_1}{d_1} \diamond \ai{k_2}{d_2} = 	\ai{k_1+k_2}{d_1+d_2}\,.
\end{align*}
We obtain an algebra $(\Q \mathcal{\tA},\diamond)$ which is, in contrast to $(\Q \A,\diamond)$, unital with neutral element $\ai{0}{0}$. On $\Q \mathcal{\tA}$ we define for $k,d\geq 0$ the following three linear maps
\begin{align*}
    \tDd:\ai{k}{d} \mapsto k \ai{k+1}{d+1},\qquad   \tWd: \ai{k}{d} \mapsto (k+d) \ai{k}{d},\qquad  \tdeerd: \ai{k}{d} \mapsto d \ai{k-1}{d-1}.  
\end{align*}
By a direct calculation, one can check that these maps are derivations on $(\Q \mathcal{\tA},\diamond)$ and satisfy the commutator relations~\eqref{eq:commutationsrelations}, i.e. $(\Q \mathcal{\tA},\diamond)$ is an $\sltwo$-algebra. Denote by $\ast$ the quasi-shuffle product on~$\Q \mathcal{\tA}$ obtained by~$\diamond$. By Proposition~\ref{prop:derisder}, we obtain derivations $\tD,\tW,\tdeer$ on the quasi-shuffle algebra $(\Q\langle \tA \rangle, \ast)$. These are explicitly given by 
\begin{align*}
    \tD: \ai{k_1,\dots,k_r}{d_1,\dots,d_r} &\mapsto \sum_{j=1}^{r} k_j \ai{k_1,\dots,k_j+1,\dots,k_r}{d_1,\dots,d_j+1,\dots,d_r},\\
    \tdeer: \ai{k_1,\dots,k_r}{d_1,\dots,d_r} &\mapsto \sum_{j=1}^{r} d_j \ai{k_1,\dots,k_j-1,\dots,k_r}{d_1,\dots,d_j-1,\dots,d_r},
\end{align*}
and $\tW$ is the multiplication by $k_1+\dots+k_r+d_1+\dots+d_r\mspace{1mu}$. Again, these derivations satisfy the commutator relations~\eqref{eq:commutationsrelations} and $(\Q\langle \tA \rangle, \ast)$ becomes an $\sltwo$-algebra. In the next section, we will define similar derivations on the subalgebra $(\QA, \ast)$ and show that these are $\sigma$-equivariant. This will give derivations on the quotient $\fmes$. In fact, the derivations $\D$ and $\W$ will just be given by the restrictions of $\tD$ and $\tW$ to the subspace~$\QA$. But notice that the derivation $\tdeer$ is not defined on $(\QA, \ast)$ and we will need to correct it in a certain sense. Since $(\Q\langle \tA \rangle, \qsh)$ seems to be a natural $\sltwo$-algebra, one can ask if there is a natural extension of $\sigma$ to $\Q\langle \tA \rangle$, such that $\tD,\tW$ and $\tdeer$ become $\sigma$-equivariant.
\end{example}

\subsection{\texorpdfstring{$\sltwo$}{sl2}-action on \texorpdfstring{$\fmes$}{FMES}}
In this section, we introduce three derivations $D,W$ and $\deer$ on $\fmes$ and show that these give $\fmes$ the structure of an $\sltwo$-algebra. 
First, we will define the maps $D,W$ and $\delta$ on~$\B$. By Definition~\ref{obs:bimould} they restrict to maps $\QA\to\QA$ and we show they are $\sigma$-equivariant derivations. 

\begin{definition} The $\Q$-linear maps
$D$ and $W$ on $\B$ are defined by $(f_r) \mapsto (Df_r)$ and $(f_r)\mapsto (Wf_r)$ respectively, where
\begin{align}
    \D f_r(\X,\Y)\df \sum_{j=1}^r \frac{\partial}{\partial X_j}\frac{\partial}{\partial Y_j} f_r(\X,\Y)
\end{align}
and 
\begin{align}
    \W f_r(\X,\Y) \df \sum_{j=1}^r\left( \frac{\partial}{\partial X_j}X_j + Y_j \frac{\partial}{\partial Y_j}\right)  f_r(\X,\Y)
\end{align}
The maps $D$ and $W$ restrict to  $(\QA,\ast)$ (see Definition~\ref{obs:bimould}).
\end{definition}
\begin{proposition}
The maps $D$ and $W$ are $\sigma$-equivariant derivations on $(\QA,\ast)$.
\end{proposition}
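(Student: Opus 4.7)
My proof proposal breaks the statement into three tasks: identifying the explicit action of $\D$ and $\W$ on words, verifying the Leibniz rule with respect to $\ast$, and establishing $\sigma$-equivariance. Using Definition~\ref{obs:bimould}, I would read off from the generating series that
\[
\D\,\ai{k_1,\ldots,k_r}{d_1,\ldots,d_r} = \sum_{j=1}^r k_j\,\ai{k_1,\ldots,k_j+1,\ldots,k_r}{d_1,\ldots,d_j+1,\ldots,d_r}, \qquad \W\,w = \wt(w)\,w,
\]
where for $\W$ the identity $\partial_{X_j}X_j=1+X_j\partial_{X_j}$ compensates for the shift coming from $X_j^{k_j-1}$ in the generating series. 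In particular both maps preserve $\QA$, and $\D$ coincides there with the restriction of the derivation $\tD$ of Example~\ref{ex:atildesl2act}.

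For the Leibniz rule, $\W$ is a derivation because $\ast$ is weight-graded. For $\D$, I would invoke Example~\ref{ex:atildesl2act} together with Proposition~\ref{prop:derisder}: the letter-level map $\tDd\colon\ai{k}{d}\mapsto k\,\ai{k+1}{d+1}$ is a derivation of $(\Q\tA,\diamond)$, hence its word-level extension $\tD=\Theta^{\tDd}$ is a derivation of $(\Q\langle\tA\rangle,\ast)$, and $\D$ is simply its restriction to the $\ast$-subalgebra $\QA\subset \Q\langle\tA\rangle$ (which is preserved because raising $k_j$ and $d_j$ by one keeps $k_j\geq 1$, $d_j\geq 0$).

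The main work is $\sigma$-equivariance. The swap is the linear substitution $X_i\mapsto \widetilde X_i:=\sum_{k=1}^{r-i+1}Y_k$ and $Y_i\mapsto \widetilde Y_i:=X_{r-i+1}-X_{r-i+2}$ with $X_{r+1}=0$. The chain rule gives, as operators $\B_r\to\B_r$,
\[
\partial_{X_j}\circ\sigma = \sigma\circ\bigl(\partial_{Y_{r-j+1}}-\partial_{Y_{r-j+2}}\bigr), \qquad \partial_{Y_j}\circ\sigma = \sigma\circ\sum_{i=1}^{r-j+1}\partial_{X_i},
\]
with the convention $\partial_{Y_{r+1}}=0$. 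Substituting these into $\D=\sum_j\partial_{X_j}\partial_{Y_j}$, exchanging the order of summation, and exploiting the telescope $\sum_{a\geq i}(\partial_{Y_a}-\partial_{Y_{a+1}})=\partial_{Y_i}$ yields $\D\sigma=\sigma\D$. For $\W$, on $\B_r$ it equals the total Euler operator $\sum_j(X_j\partial_{X_j}+Y_j\partial_{Y_j})$ plus the constant $r$; since the substitution defining $\sigma$ is homogeneous of degree one in the complementary block of variables, the Euler operator commutes with $\sigma$, and the constant shift trivially does as well. The only non-routine step is the chain-rule bookkeeping for $\D$: tracking the boundary conventions and spotting the telescoping that collapses the cross-terms into $\sigma\D$; everything else is either immediate from weight-gradedness or packaged by the general machinery of Section~\ref{sec:derivationsforqsh}.
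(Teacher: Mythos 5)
Your proposal is correct and follows essentially the same route as the paper: read off the word-level formulas from the generating series, deduce the Leibniz rule from Proposition~\ref{prop:derisder} applied to the letter-level derivations $\ai{k}{d}\mapsto k\ai{k+1}{d+1}$ and $\ai{k}{d}\mapsto(k+d)\ai{k}{d}$, and obtain $\sigma$-equivariance from the invariance of the operators $\sum_j\partial_{X_j}\partial_{Y_j}$ and $\sum_j(\partial_{X_j}X_j+Y_j\partial_{Y_j})$ under the coordinate change defining the swap. The only difference is that you carry out the chain-rule and telescoping computation for this last invariance explicitly, which the paper merely asserts.
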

\begin{proof}
First notice that on the level of coefficients the maps $D,W$ are given by 
\begin{align}\label{eq:Donwords}
    \D 	\ai{k_1,\dots,k_r}{d_1,\dots,d_r}  = \sum_{j=1}^r k_j    \,	\ai{k_1,\dots,k_j+1,\dots,k_r}{d_1,\dots,d_j+1,\dots,d_r}
\end{align}
and 
\begin{align}
    \W	\ai{k_1,\dots,k_r}{d_1,\dots,d_r}  = (k_1+\dots+k_r+d_1+\dots+d_r) \,\ai{k_1,\dots,k_r}{d_1,\dots,d_r}.
\end{align}
That $\D$ and $\W$ are derivations therefore follows from Proposition~\ref{prop:derisder} by noticing that $\ai{k}{d} \mapsto k \ai{k+1}{d+1}$ and $\ai{k}{d} \mapsto (k+d) \ai{k}{d}$, similar as in Example~\ref{ex:atildesl2act}, are both derivations on $(\Q \A, \diamond)$. Now, as 
\[ \sum_{j=1}^r \frac{\partial}{\partial X_j}\frac{\partial}{\partial Y_j} \qquad \text{and} \qquad \sum_{j=1}^r\frac{\partial}{\partial X_j}X_j + Y_j \frac{\partial}{\partial Y_j}\]
are invariant under the change of coordinates in $\sigma$, i.e., under
\[    \bi{X_1,\dots,X_r}{Y_1,\dots,Y_r}   \mapsto \bi{Y_1 + \dots + Y_r,\dots,Y_1+Y_2,Y_1}{X_r,X_{r-1}-X_r,\dots,X_1-X_2}\,,\]
it follows that $D$ and $W$ commute with $\sigma$.
\end{proof}

At first sight, the following definition of $\deer$ might seem unnatural. Note, however, that our numerical computations in Sage suggest that $\deer$ is the unique $\sigma$-equivariant derivation on $\QA$ of weight $-2$ (up to multiplication by a constant). Moreover, as we prove in the next results, this derivation forms an $\sltwo$-triple together with the derivations $D$ and $W$, which (as we explain in the next section) give rise to the usual $\sltwo$-action on the subspace of $\fmes$ of formal quasimodular forms. 
\begin{definition} Define the linear map $\deer: \B\rightarrow \B$ by $\deer(f_r)=(\deer f_r)$ with $ \deer f_r(\X,\Y)$ given by
\begin{align*}
 & (X_1Y_1+\ldots+X_rY_r)f_r(\X,\Y)  \\&\qquad + \sum_{j\geq 1} \biggl(-(X_j - X_{j+1} + Y_{j}) \frac{\varphi_{j}^{+}}{2}-(X_{j-1} - X_{j} + Y_{j}) \frac{\varphi_{j}^{-}}{2} 
 +\Bigl(\frac{\varphi_{j}^+}{2}\Bigr)^2- \Bigl(\frac{\varphi_{j}^{-}}{2}\Bigr)^2\biggr)f_r(\X,\Y).
\end{align*}
By Definition~\ref{obs:bimould} we obtain a $\Q$-linear map $\delta: \QA \rightarrow \QA$.
\end{definition} 

\begin{proposition}\label{prop:deer}
 The map~$\deer$ is a $\sigma$-equivariant derivation on $(\QA,\ast)$.
\end{proposition}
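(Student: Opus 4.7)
My plan is to prove Proposition~\ref{prop:deer} in two stages. First, I would show that $\delta$ is a derivation on $(\QA,\ast)$ by writing it as a sum of five derivations obtained from the constructions of Section~\ref{sec:derivationsforqsh}. Second, I would verify $\sigma$-equivariance by a direct calculation, using the identities \eqref{eq:phiandsigma} together with the $\sigma$-invariance of a short list of polynomials in $X_j, Y_j$.

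For the derivation property, the first step is to translate each summand in the definition of $\delta$ into its word-level action. The multiplication operator $\sum_j X_j Y_j$ is, upon extracting coefficients, the letter-wise operator $\Theta^\varphi$ for $\varphi\ai{k}{d} = d\,\ai{k-1}{d-1}$ (with the convention $\ai{0}{d-1}=0$); this $\varphi$ fails to be a derivation on $(\Q\A,\diamond)$ precisely at letters $\ai{1}{d}$. Lemma~\ref{lem:thetaphideriv} then produces a genuine derivation $\Theta_M$, once one verifies that the obstruction $\gamma(a,b)$ vanishes on $\operatorname{im}(\diamond)\times\operatorname{im}(\diamond)$ and satisfies the cocycle identity~\eqref{eq:gamma}---both routine direct checks. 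The four remaining summands of $\delta$ I would identify as four further derivations: the linear terms $-(X_j - X_{j+1}+Y_j)\varphi_j^+/2$ and $-(X_{j-1}-X_j+Y_j)\varphi_j^-/2$ (summed over $j$) via Lemma~\ref{lem:dphiaonqsh} applied to $S=\{\ai{1}{d}:d\geq 0\}$ with an appropriate family of letter-maps merging adjacent indices, and the quadratic terms $(\varphi_j^\pm)^2/4$ (summed over $j$) via Corollary~\ref{cor:removeletterderqsh} with $a=\ai{1}{0}$ combined with an $\ai{1}{d}$-shift that accounts for the extra index-merging. One must then check that the sum of the five derivations reproduces the explicit formula for $\delta$; in particular, the $\gamma$-correction built into $\Theta_M$ has to cancel against the appropriate contributions coming from the linear $\varphi_j^\pm$ terms.

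For $\sigma$-equivariance, I would check each piece separately. The multiplicative factor is $\sigma$-invariant: under $X_j \mapsto Y_1+\ldots+Y_{r-j+1}$ and $Y_j\mapsto X_{r-j+1}-X_{r-j+2}$, an Abel summation gives
\[ \sigma\Bigl(\sum_{j=1}^r X_j Y_j\Bigr) = \sum_{j=1}^r (Y_1+\ldots+Y_{r-j+1})(X_{r-j+1}-X_{r-j+2}) = \sum_{j'=1}^r X_{j'} Y_{j'}. \]
Direct computation then yields $\sigma(X_j - X_{j+1}+Y_j) = X_{r-j+1}-X_{r-j+2}+Y_{r-j+1}$ and $\sigma(X_{j-1}-X_j+Y_j) = X_{r-j+1}-X_{r-j+2}+Y_{r-j+2}$; combined with $\varphi_j^+\sigma = \sigma\varphi_{r-j+1}^+$ and $\varphi_j^-\sigma = \sigma\varphi_{r-j+2}^-$ from \eqref{eq:phiandsigma}, the involutions $j\mapsto r-j+1$ and $j\mapsto r-j+2$ permute the linear summands of $\delta$ among themselves. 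For the quadratic terms, applying \eqref{eq:phiandsigma} twice (with the second application carried out in rank $r-1$) gives $(\varphi_j^+)^2\sigma = \sigma(\varphi_{r-j+1}^+)^2$, and similarly on the minus side; the $\sigma$-equivariance of each quadratic summand follows.

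The main obstacle is the first stage: identifying precisely the five derivations and verifying that their sum matches the stated formula for $\delta$. The bookkeeping is delicate because the $\gamma$-correction from Lemma~\ref{lem:thetaphideriv} only partially cancels with the $-Y_j\varphi_j^+/2$ and $-Y_j\varphi_j^-/2$ contributions, and in particular the boundary case $k_j = k_{j+1} = 1$ requires the interplay of multiple pieces (including the quadratic $(\varphi_j^\pm)^2$-terms) to produce the correct coefficient. Once the decomposition is secured, the derivation property is automatic from Section~\ref{sec:derivationsforqsh}, and the $\sigma$-equivariance follows from the symmetry arguments above with minimal further effort.
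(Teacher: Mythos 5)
Your overall architecture is the same as the paper's: establish the derivation property by writing $\delta$ as a sum of derivations built from the toolkit of Section~\ref{sec:derivationsforqsh}, and deduce $\sigma$-equivariance from the invariance of $X_1Y_1+\ldots+X_rY_r$ under the change of coordinates together with the intertwining relations~\eqref{eq:phiandsigma}. Your equivariance argument and your treatment of the ``multiplication plus $\gamma$-correction'' piece are correct and agree with the paper: the letter map $\varphi\ai{k}{d}=\ind_{k>1}\,d\,\ai{k-1}{d-1}$ has $\gamma\bigl(\ai{k_1}{d_1},\ai{k_2}{d_2}\bigr)=(\ind_{k_1=1}d_1+\ind_{k_2=1}d_2)\ai{k_1+k_2-1}{d_1+d_2-1}$, which vanishes on $\im\diamond\times\im\diamond$ and satisfies~\eqref{eq:gamma}, and the resulting derivation from Lemma~\ref{lem:thetaphideriv} is exactly the multiplication operator together with the $-Y_j\varphi_j^{\pm}/2$ parts of the linear terms (this is the paper's $\delta^1$).

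The gap is in how you propose to account for the rest. Grouping the remaining summands by their degree in $\varphi_j^{\pm}$ --- all $X$-linear terms to Lemma~\ref{lem:dphiaonqsh}, all quadratic terms to Corollary~\ref{cor:removeletterderqsh} --- does not cut $\delta$ into pieces that are individually derivations, and your specific assignment for the quadratic part cannot work: since $\ai{1}{0}\notin\im(\diamond)$, the derivation $\drighta_\diamond$ with $a=\ai{1}{0}$ retains only its $l=1$ term, has weight $-1$ (it is a summand of $\omega$), and contains no quadratic piece at all. In the paper's decomposition $\delta=\delta^1-\tfrac12(\delta^2+\delta^3+\delta^4+\delta^5)$ the quadratic terms do not form a separate summand: they arise as the $l=2$ term of $\drighta_\diamond$ with $a=\ai{2}{0}$ (removal of a trailing pair $\ai{1}{0}\ai{1}{0}$, inside $\delta^2$) and as the correction terms produced by conjugating $\Theta^{\varphi_a,a}$ for $a=\ai{2}{d}$ by $\exp_\diamond$ and $\log_\diamond$ (inside $\delta^5$), while the $X$-linear terms are distributed over $\delta^2$ and $\delta^3$ (trailing $\ai{2}{0}$ and $\ai{1}{1}$ removals via Corollary~\ref{cor:removeletterderqsh}), $\delta^4$ (Lemma~\ref{lem:dphiaonqsh} with $S=\{\ai{1}{d}\mid d\geq 0\}$), and the linear part of $\delta^5$. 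So the correct cuts run across your constant/linear/quadratic grouping, and you would need to reorganize the decomposition along these lines before the ``automatic from Section~\ref{sec:derivationsforqsh}'' step applies; as sketched, the plan stalls at precisely the point you flag as the main obstacle.
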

\begin{proof}
The fact that $\deer$ commutes with $\sigma$ follows directly by observing $X_1Y_1+\ldots +X_rY_r$ is invariant under the change of coordinates dictated by $\sigma$ and by making use of the relations~\eqref{eq:phiandsigma}. 

We now show that $\delta$ is a derivation. For this notice that on the level of coefficients, $\delta$ is explicitly given by
\begin{align*}
 \deer \df \deer^1 -\frac{1}{2}\bigl( \deer^2 + \deer^3 + \deer^4 + \deer^5\bigr)\mspace{1mu},
\end{align*}
where $\deer^j: \QA \rightarrow \QA$ ($1\leq j \leq 5$) are the linear maps defined on the generators by
\begin{align*}
\deer^1 \ai{k_1,\dots,k_r}{d_1,\dots,d_r} &\df \sum_{j=1}^r \ind_{k_j>1}\,  d_j \, \ai{k_1,\dots,k_j-1,\dots,k_r}{d_1,\dots,d_j-1,\dots,d_r}\\
&\qquad-\frac{1}{2}\sum_{j=1}^{r-1} \ind_{\substack{k_{j+1}=1 }} \,d_{j+1}\,\ai{k_1,\dots,k_j,k_{j+2},\dots,k_r}{d_1,\dots,d_j+d_{j+1}-1,d_{j+2},\dots,d_r} \\
&\qquad -\frac{1}{2} \sum_{j=1}^{r-1} \ind_{k_j=1} \,d_j\, \ai{k_1,\dots,k_{j-1},k_{j+1},\dots,k_r}{d_1,\dots,d_{j-1},d_j+d_{j+1}-1,\dots,d_r},\\[5pt]
\deer^2 \ai{k_1,\dots,k_r}{d_1,\dots,d_r} &\df \ind_{\substack{k_r=2\\d_r=0}} \,\ai{k_1,\dots,k_{r-1}}{d_1,\dots,d_{r-1}}\,-\,\frac{1}{2}\, \ind_{\substack{k_{r-1}=k_{r}=1\\d_{r-1}=d_r=0}}\, \ai{k_1,\dots,k_{r-2}}{d_1,\dots,d_{r-2}},\\[5pt]
\deer^3 \ai{k_1,\dots,k_r}{d_1,\dots,d_r} &\df  \ind_{\substack{k_r=1\\d_r=1}}\, \ai{k_1,\dots,k_{r-1}}{d_1,\dots,d_{r-1}},\\[5pt]
\deer^4 \ai{k_1,\dots,k_r}{d_1,\dots,d_r} &\df  \sum_{j=2}^{r} \kd{\!\!\!k_{j}=1}{\!\!\!k_{j-1}>1}\, \ai{k_1,\dots,k_{j-1}-1,k_{j+1},\dots,k_r}{d_1,\dots,d_{j-1}+d_{j},d_{j+1},\dots,d_r}\,
\\&\qquad-\sum_{j=1}^{r-1} \kd{\!\!\!k_j=1}{\!\!\!k_{j+1}>1}\,  \ai{k_1,\dots,k_{j-1},k_{j+1}-1,\dots,k_r}{d_1,\dots,d_{j-1},d_j+d_{j+1},\dots,d_r},
 \end{align*}
 \begin{align*}
\deer^5 \ai{k_1,\dots,k_r}{d_1,\dots,d_r} &\df\sum_{j=1}^{r-1} \ind_{k_j=2}\, \ai{k_1,\dots,k_{j-1},k_{j+1},\dots,k_r}{d_1,\dots,d_{j-1},d_j+d_{j+1},\dots,d_r} \,\\&\qquad-\sum_{j=1}^{r-1} \ind_{k_{j+1}=2}\, \ai{k_1,\dots,k_j,k_{j+2},\dots,k_r}{d_1,\dots,d_j+d_{j+1},d_{j+2},\dots,d_r}\\
&\qquad+\frac{1}{2} \sum_{j=1}^{r-2} \ind_{k_{j+1}=k_{j+2}=1} \,\ai{k_1,\dots,k_j,k_{j+3},\dots,k_{r}}{d_1,\dots,d_j+d_{j+1}+d_{j+2},d_{j+3},\dots,d_r}\\
&\qquad-\frac{1}{2} \sum_{j=1}^{r-2} \ind_{k_j=k_{j+1}=1}\,\ai{k_1,\dots,k_{j-1},k_{j+2},\dots,k_r}{d_1,\dots,d_{j-1},d_j+d_{j+1}+d_{j+2},\dots,d_r}.
\end{align*} 
Now, we show each $\deer^i$ is a derivation:
\begin{enumerate}[(i)]
\item For $\deer^1$ use the derivation $\dword$ of Lemma~\ref{lem:thetaphideriv} with $\dletter: \ai{k}{d} \mapsto \ind_{k>1} d \ai{k-1}{d-1}$. Note 
\[
\gamma\biggl(\ai{k_1}{d_1},\ai{k_2}{d_2}\biggr) = \bigl(\ind_{k_1=1} d_1+\ind_{k_2=1}d_2\bigr)\ai{k_1+k_2-1}{d_1+d_2-1}.
\]
\item For $\deer^2$ use the derivation $\drighta$ of Corollary~\ref{cor:removeletterderqsh} with $a=\ai{2}{0}$.
\item For  $\deer^3$ also use the derivation $\drighta$ of Corollary~\ref{cor:removeletterderqsh}, now with $a=\ai{1}{1}$.

\item For $\deer^4$ consider the derivation $\Theta_S$ of Lemma~\ref{lem:dphiaonqsh}, where $S=\{ \ai{1}{d'} \mid d'\geq 0\}$ and for $a=\ai{1}{d'}$, we let $\varphi_{a}: \ai{k}{d} \mapsto \ind_{k>1} \ai{k-1}{d+d'}$. 
\item Using Proposition~\ref{prop:phideriv}, we find
\[\delta^5 = \sum_{\substack{d\geq 0 \\a=\ai{2}{d}}} \exp_\diamond \circ\, \Theta^{\varphi_a,a} \circ \, \log_\diamond,\]
where $\varphi_a: \ai{k'}{d'} \mapsto \ai{k'}{d+d'}$ for $a=\ai{2}{d}$.
Namely, the letter $\ai{2}{d}$ appears as a $\diamond$-product of two letters: $\ai{2}{d} = \ai{1}{d_1} \diamond \ai{1}{d_2}$ for any $d_1+d_2=d$. Therefore,  $\Theta^{\varphi_a,a}$ does not commute with $\log_{\diamond}\mspace{1mu}$, since in the definition of $\log_{\diamond}$ in~\eqref{eq:deflog} we could have for some $1\leq j \leq l$ that $i_j = 2$ with $a=a_{i_1+\dots+i_j+1} \diamond  a_{i_1+\dots+i_j+2}\mspace{1mu}$. This gives rise to the additional terms with factor $\frac{1}{2}$ in~$\delta^5$. \qedhere
\end{enumerate}
\end{proof}

\begin{theorem}\label{thm:fmesissl2algebra}
The space~$\fmes$ is an $\sltwo$-algebra.
\end{theorem}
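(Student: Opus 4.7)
Plan: The maps $D$, $W$, $\deer$ have already been established as $\sigma$-equivariant derivations on $(\QA, \ast)$, so each descends to a derivation on the quotient $\fmes = \QA/\I$. What is therefore left to prove is the triple of $\sltwo$-commutator relations
\[
[W, D] = 2D, \qquad [W, \delta] = -2\delta, \qquad [\delta, D] = W
\]
as $\Q$-linear endomorphisms of $(\QA, \ast)$; on the quotient they then hold automatically.

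The first two relations are immediate from the weight grading. Inspection of the explicit formulas shows that $D$ raises the weight by $2$, that $\delta$ lowers the weight by $2$, and that $W$ acts on a weight-$n$ word $w$ as $Ww=nw$. Thus on any homogeneous $w$ of weight $n$ one has $[W,D]w=(n+2)Dw-n\,Dw=2Dw$ and similarly $[W,\delta]w=-2\delta w$. It therefore remains to verify the third relation $[\delta, D] = W$, which is the substantive identity.

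My plan is to check this on a general generator $\ai{\vec k}{\vec d}$ by using the explicit decomposition $\delta = \delta^1 - \tfrac{1}{2}(\delta^2 + \delta^3 + \delta^4 + \delta^5)$ obtained in the proof of Proposition~\ref{prop:deer}, together with the formula~\eqref{eq:Donwords} for $D$. I would first isolate the `principal' summand
\[
\delta^1_{\mathrm{main}} \ai{k_1,\dots,k_r}{d_1,\dots,d_r} = \sum_{j=1}^{r} \ind_{k_j > 1}\, d_j\, \ai{\dots,k_j-1,\dots}{\dots,d_j-1,\dots}
\]
of $\delta^1$ and verify that this piece alone already yields $W$ on commutation with $D$: summing the combinatorial identity $k_j(d_j+1) - d_j(k_j-1) = k_j+d_j$ over $j$ produces precisely the weight of the input word. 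It then suffices to show that the correction $\delta^{\mathrm{rest}} := \delta - \delta^1_{\mathrm{main}}$ commutes with $D$, i.e.\ $[\delta^{\mathrm{rest}}, D] = 0$.

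The main obstacle lies in this last step. The map $\delta^{\mathrm{rest}}$ consists of length-decreasing operations that delete a letter $\ai{1}{\cdot}$ (or, in $\delta^5$, an $\ai{2}{\cdot}$ arising from the $\diamond$-product of two $\ai{1}{\cdot}$'s), whereas $D$ simultaneously raises the $k$- and $d$-indices of a single letter. When $D$ acts on a letter slated to be deleted by $\delta^{\mathrm{rest}}$, the constraint $k_j = 1$ is destroyed and the two orders differ only in which letter is touched first, producing cancellations between the pieces $\delta^2,\ldots,\delta^5$ and the subsidiary sums of $\delta^1$. I would organize these cancellations by tabulating, for each pair $(i,j)$, whether the derivative index $i$ of $D$ equals, neighbors, or is disjoint from the contraction index $j$ of each $\delta^i$, and by pairing the resulting terms. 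Alternatively, one may verify $[\delta^{\mathrm{rest}}, D]=0$ on the generating-series side, where $D=\sum_j \partial_{X_j}\partial_{Y_j}$ and the pieces of $\delta^{\mathrm{rest}}$ take the explicit form $-\tfrac{1}{2}(X_j - X_{j+1}+Y_j)\varphi_j^+$, $-\tfrac{1}{2}(X_{j-1}-X_j+Y_j)\varphi_j^-$, $\tfrac{1}{4}(\varphi_j^+)^2$ and $-\tfrac{1}{4}(\varphi_j^-)^2$; the commutators of $D$ with the linear prefactors generate exactly those terms that are compensated by the commutators coming from the quadratic $\tfrac{1}{4}((\varphi_j^+)^2 - (\varphi_j^-)^2)$ correction, while the remaining contributions telescope in $j$ and vanish at the boundaries $j=1,r$.
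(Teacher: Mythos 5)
Your proposal is correct and follows essentially the same route as the paper's proof: the relations $[W,D]=2D$ and $[W,\delta]=-2\delta$ come from weight counting, and $[\delta,D]=W$ is checked on generators from the explicit formulas, with the diagonal part of $\delta^1$ supplying $W$ via the identity $k_j(d_j+1)-d_j(k_j-1)=k_j+d_j$ and the remaining length-decreasing pieces commuting with $D$. One small remark in your favour: the paper distributes the residual vanishing as $[\deer^i,D]=0$ for each $i=2,\dots,5$, which does not hold term by term (on $\ai{2,1}{0,0}$ one finds $[\deer^4,D]$ and $[\deer^5,D]$ equal to $+\ai{2}{1}$ and $-\ai{2}{1}$ respectively, cancelling only in the sum), so your coarser claim $[\deer^{\mathrm{rest}},D]=0$ is the statement that actually needs verifying.
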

\begin{proof}
By the previous propositions, it remains to show that the derivations $D,W,\deer \in \operatorname{Der}(\fmes)$  form an \emph{$\sltwo$-triple}. Using the explicit formulas for $D, W$ in~\eqref{eq:Donwords} and for $\delta^i$ in the proof of Proposition~\ref{prop:deer}, one directly find $[D,W]=-2D$. Also,  it is not hard to verify that 
\[ [\deer^i,W] = 2\deer^i \qquad (i=1,2,3,4,5)\]
and
\[ [\deer^1,D] = W \qquad \text{and} \qquad [\deer^i,D]=0 \qquad (i=2,3,4,5). \qedhere\]
\end{proof}

\begin{remark}\label{rem:uniquedwdelta}
The derivations $D,W,\omega$ and $\deer$ are of weight $2,0,-1,-2$ respectively. 
Numerical experiments in Sage suggest that these are the only $\sigma$-equivariant derivations on $(\QA, \ast)$ of weight between $-2$ and $2$ (up to multiplication by a constant). In particular, there seems not to exist a non-trivial $\sigma$-equivariant derivation of weight~$1$.
\end{remark}

\begin{remark}
The map $[\omega,\deer]=\omega \deer -\deer \omega$ is a non-trivial $\sigma$-equivariant derivation of weight $-3$. Moreover, the map $\ttt:\B\to \B$ defined by $\ttt(f_r)=(\ttt f_r)$ conjecturally yields a $\sigma$-equivariant derivation of weight $-3$, where $\ttt$ acts on $f_r$ by
\begin{align*}
\ttt \df \, 
&\sum_{j=1}^r (X_j-X_{j+1}+ Y_{j} )^2 \,\varphi_{j}^{+} - \sum_{j=2}^r (X_{j-1} - X_{j}  - Y_j)^2 \,\varphi_{j}^{-} \\
&+\sum_{j=1}^{r-1} (-X_j+X_{j+2} - Y_{j}- Y_{j+1} )\, (\varphi_{j}^{+})^2 \\
&-\sum_{j=2}^{r-1} (X_{j-1}-4X_j  + 3X_{j+1} - 3Y_j + Y_{j+1} ) (\varphi_{j}^{-})^2 \\
&+\frac{1}{3}\sum_{j=1}^{r-2} (\varphi_{j}^{+})^3-\frac{1}{3}\sum_{j=2}^{r-2} (\varphi_{j}^{-})^3.
\end{align*}
In particular, $[\omega,\deer]$ and $\ttt$ seem to form a basis for the space of derivations of weight $-3$. Explicitly, on elements in $\fmesz$ the derivation $\ttt$ is given by
\begin{align*}
\ttt\, & \gf(k_1,\ldots,k_r) =\\
& \sum_{j=1}^r 
(-1)^{k_j+1}\,\binom{2}{k_j-1} \,\gf(k_1,\ldots,k_{j-1},k_j+k_{j+1}-3,\ldots,k_r) \\ 
& -\sum_{j=2}^r 
(-1)^{k_j+1}\,\binom{2}{k_j-1} \,\gf(k_1,\ldots,k_{j-1}+k_j-3,k_{j+1},\ldots,k_r) \\
&+\sum_{j=1}^{r-1} \ind_{(k_j,k_{j+1})\in \{(1,1),(2,1)\}}\,(-1)^{k_j+1} \,\gf(k_1,\ldots,k_{j-1},k_j+k_{j+1}+k_{j+2}-3,\ldots,k_r) \\
&-\sum_{j=2}^{r-1} (\ind_{(k_j,k_{j+1})=(1,1)}-4\cdot\ind_{(k_j,k_{j+1})=(2,1)}+3\cdot\ind_{(k_j,k_{j+1})=(1,2)})\\[-5pt]&\hspace{180pt}\gf(k_1,\ldots,k_{j-1}+k_j+k_{j+1}-3,k_{j+2},\ldots,k_r)\\
&+\frac{1}{3}\ind_{(k_1,k_{2},k_{3})=(1,1,1)}\,
\gf(k_{4},\ldots,k_r),
\end{align*}
where $\gf$ of a sequence containing non-positive entries is considered to be $0$. It remains an interesting open question to find all $\sigma$-equivariant derivations on $\QA$. 
\end{remark}

\subsection{Restriction to \texorpdfstring{$\fmesz$}{FMES} and the space~\texorpdfstring{$\fames$}{E}}
We provide explicit formulas for the derivations~$D$ and~$\deer$ on~$\fmesz$ (the subspace which conjecturally equals $\fmes$). In particular, these formulas imply the following.

\begin{proposition}\label{prop:sltwosubalgebra}
The subspace~$\fmesz$ is an $\sltwo$-subalgebra of $\fmes$. 
\end{proposition}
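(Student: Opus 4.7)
The plan is to verify the three conditions that define an $\sltwo$-subalgebra: that $\fmesz$ is closed under the stuffle product~$\ast$ and under each of the three derivations $W$, $D$, $\deer$.

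Closure under $\ast$ is immediate from Definition~\ref{def:stuffleproduct}: if $w,v$ are words built from letters in $\A_0=\{\ai{k}{0}\mid k\geq 1\}$, then every word appearing in $w\ast v$ is also built from letters in $\A_0$, since the contraction rule $\ai{k_1}{0}\diamond\ai{k_2}{0}=\ai{k_1+k_2}{0}$ introduces no positive second coordinate. Closure under $W$ is trivial, as $W$ acts as multiplication by the weight on each generator $\gf(k_1,\ldots,k_r)$ and so restricts to $\fmesz$ without any further argument.

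The real content is closure under $D$ and $\deer$. A priori, $D\gf(k_1,\ldots,k_r)=\sum_j k_j\,\gf\bi{k_1,\ldots,k_j+1,\ldots,k_r}{0,\ldots,1,\ldots,0}$ lies in $\fil{\lwt}{1}\fmes$ rather than $\fmesz$, and a similar issue occurs for the five pieces $\deer^1,\ldots,\deer^5$ making up $\deer$. I would produce explicit formulas---the formulas advertised in the sentence preceding the statement---that rewrite $D\gf(k_1,\ldots,k_r)$ and $\deer\gf(k_1,\ldots,k_r)$ as $\Q$-linear combinations of single-indexed $\gf(\ldots)$. The mechanism is to exploit the double shuffle identity $f\ast g=f\osh g$, valid in $\fmes$ by swap invariance. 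Already in depth one, the second expression in Proposition~\ref{prop:dshindep1} can be solved for $\gf\bi{k}{d+1}$ in terms of the double stuffle plus single-indexed generators, and more generally the comparison of the two product formulas lets one solve for any depth-$r$ symbol with a single nonzero $d_j$ in terms of $\fmesz$-elements. Applying this to each summand in the naive formula for $D\gf(k_1,\ldots,k_r)$ gives the desired explicit expression inside $\fmesz$ (the displayed formula \eqref{eq:Donfil0} referred to in the introduction). The example $D\gf(1)=\gf(3)-\gf(2,1)$ is a sanity check of this reduction, since it follows from the depth-one comparison $\gf(1)\ast\gf(2)=\gf(1)\osh\gf(2)$.

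The main obstacle, and the bulk of the work, is the analogous assertion for $\deer$. Unlike $D$, the explicit decomposition $\deer=\deer^1-\tfrac12(\deer^2+\deer^3+\deer^4+\deer^5)$ in the proof of Proposition~\ref{prop:deer} has pieces that individually contribute terms of lower weight $0$, $1$, and even $-1$ on a single-indexed input, and one must show that the global contribution lies in $\fmesz$. I would handle this at the level of the generating series: combine the operators $\varphi_j^\pm$ appearing in the definition of $\deer$, compute $\deer\genG_r\bi{X_1,\ldots,X_r}{0,\ldots,0}$ by setting $Y_1=\cdots=Y_r=0$ after the action, and use the swap-invariance of $\genG_r$ to rewrite any surviving $Y$-dependence in terms of the $X$-variables of the swapped tuple. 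This yields an explicit combinatorial sum of single-indexed $\gf$'s, proving $\deer\fmesz\subseteq\fmesz$. The delicate point will be to track the cancellations among $\deer^2,\deer^3,\deer^4,\deer^5$ that force the $\lwt=1$ remainder in $\deer^1\gf(k_1,\ldots,k_r)$ to disappear after appealing to swap invariance.
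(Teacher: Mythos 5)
Your reduction to closure under $D$ and $\deer$ is the right framing (closure under $\ast$ and $W$ is indeed immediate), but the argument you propose for $D$ has a genuine gap. From the double-shuffle identity $f\ast g=f\osh g$ one extracts, for each choice of $f$ and $g$, a single relation; comparing $\gf(2)\ast\gf(k_1,\dots,k_r)$ with the swapped product yields only that the \emph{specific combination} $D\gf(k_1,\dots,k_r)=\sum_j k_j\,\gf\bi{k_1,\dots,k_j+1,\dots,k_r}{0,\dots,1,\dots,0}$ lies in $\fmesz$ --- and even that requires actually carrying out the generating-series computation behind \eqref{eq:Donfil0}, which is the content of \cite[Propositions~6.30 and 6.31]{BB} that the paper cites rather than reproves. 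Your stronger claim, that one can ``solve for any depth-$r$ symbol with a single nonzero $d_j$ in terms of $\fmesz$-elements'' and then substitute summand by summand into the naive formula for $D$, is for $r\geq 2$ an instance of the refined form of Conjecture~\ref{conj:BBvI} (namely $\fil{\lwt,\dep}{1,r}\fmes\subset\fil{\lwt,\dep}{0,r+1}\fmes$), which is open; your depth-one computation via Proposition~\ref{prop:dshindep1} succeeds only because there is a single unknown symbol to isolate. So as written, the $D$-closure is asserted, not proved.

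Conversely, you have made the $\deer$-closure far harder than it is, apparently by conflating the weight drop of $\deer$ with its effect on the lower weight $\lwt$. On a word with $d_1=\dots=d_r=0$, every term of $\deer^1$ and of $\deer^3$ carries an explicit factor $d_j=0$ or an indicator $\ind_{d_r=1}$ and hence vanishes identically, while $\deer^2$, $\deer^4$ and $\deer^5$ visibly send such a word to words whose lower indices are sums of the $d_j$, i.e.\ again all $0$. Thus each $\deer^i$ preserves $\fmesz$ term by term: there is no ``$\lwt=1$ remainder from $\deer^1$'' to cancel, no delicate interaction among $\deer^2,\dots,\deer^5$ to track (the cancellations that do occur, e.g.\ between $\deer^2$ and $\deer^5$, take place entirely inside $\fmesz$ and merely shape the formula), and no need for generating series or swap invariance. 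This is exactly the paper's one-line argument: it records the resulting explicit single-indexed formula \eqref{eq:deltainfil0} and is done.
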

\begin{proof}
From the explicit formulas in the proof of Proposition~\ref{prop:deer}, we directly see that $\deer$ keeps the space~$\fmesz$ invariant:
\begin{align}\begin{split}\label{eq:deltainfil0}
    \deer\, \gf(k_1,\dots,k_r) & = -\frac{1}{2} \ind_{k_1=2} \,\gf(k_2,\dots,k_r) + \frac{1}{4} \ind_{k_1=k_2=1} \gf(k_3,\dots,k_r)\\
    &\qquad +\frac{1}{2}\sum_{j=1}^{r-1}\kd{\!\!\!k_j=1}{\!\!\!k_{j+1}>1}\, \gf(k_1,\dots,k_{j-1},k_{j+1}-1,\dots,k_r) \\
    &\qquad - \frac{1}{2}  \sum_{j=2}^{r}\kd{\!\!\! k_j=1}{\!\!\! k_{j-1}>1}\, \gf(k_1,\dots,k_{j-1}-1,k_{j+1},\dots,k_r).
    \end{split}
\end{align}
Moreover, following the same proof as in~\cite[Proposition~6.30 and Corollary~6.31]{BB} we have
\begin{align}\label{eq:Donfil0}
    \D  \gf(k_1,\dots,k_r) = \gf\bigl( z_2 \ast z_{k_1} \cdots z_{k_r} - z_2 \shuffle z_{k_1} \cdots z_{k_r} \bigr),
\end{align}
where the right-hand side is given by the map $\eqref{eq:Gffromh1}$ and $\shuffle$ denotes the shuffle product on $\h^1$ defined in the introduction. 
Hence, the action of the $\sltwo$-triple restricts to $\fmesz$. 
\end{proof}

\begin{remark} 
\emph{Define}  $\delta$ and $\D$ on $\h^1$ by the formulas~\eqref{eq:deltainfil0} and~\eqref{eq:Donfil0}.
It might be interesting to consider the algebra $(\h^1,\ast)$ modulo the relations which ensure that it becomes an $\sltwo$-algebra. For example, since $D: w \mapsto z_2 \ast w - z_2 \shuffle w$ is not a derivation on $(\h^1,\ast)$, we would impose the relation $D(w \ast v) - D(w) \ast v - w \ast D(v) =0$ for all $w,v \in \h^1$. In the special case $v=w=z_1\mspace{1mu}$, one finds $z_4 - 2 z_2 z_2 + 2 z_3 z_1=0$. This corresponds to the first relation among the elements in~$\fmesz$, i.e., $\gf(4) = 2 \gf(2,2) - 2 \gf(3,1)$. The relations obtained this way might be a first step in answering the question raised in Remark~\ref{rem:anotherdefoffmes}, namely, to find an ideal $\mathfrak{R} \subset (\h^1,\ast)$, such that $\fmes \cong \faktor{(\h^1,\ast)}{\mathfrak{R} }$.
\end{remark}

As a formal analogue of the space~$\ames$ defined in~\eqref{eq:defames}, we define the following subspace of~$\fmesz$
\begin{align}\label{eq:deffames}
    \fames \df \langle \gf(k_1,\dots,k_r) \mid  r\geq 0, k_1,\dots,k_r \geq 2\rangle_\Q\,.
\end{align}
By the definition of the stuffle product, we see that $\fames$ is a subalgebra. Clearly, the algebra~$\fames$ is closed under $W$ and by~\eqref{eq:deltainfil0} it is also closed under $\deer$. But from~\eqref{eq:Donfil0} we do not get immediately that $\fames$ is closed under $D$, since, for example, we have $$z_2 \ast z_3 - z_2 \shuffle z_3 = (z_2 z_3 + z_3 z_2 + z_5) - (z_{2}z_{3}+ 3 z_3 z_2 + 6 z_4 z_1)= z_5 - 2 z_3 z_2 - 6 z_4 z_1$$ and therefore
\begin{align}
    D \gf(3) = \gf(5) - 2 \gf(3,2) - 6\gf(4,1).
\end{align}
Here $\gf(4,1)$ is, a priori, not an element of $\fames$, but we can give the following alternative expression for $D \gf(k)$, which shows that $D \gf(k) \in \fames$ for all $k\geq 2$.

\begin{proposition}
For $k\geq 2$ we have 
\begin{align*}
    D \gf(k) = (2k-1) \gf(k+2) - \gf(2,k) - \sum_{j=2}^k (k+j-1)\, \gf(k+2-j,j).
\end{align*}
\end{proposition}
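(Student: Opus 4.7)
The plan is to compute $D\gf(k)$ in two independent ways, each producing an expression that contains the single non-$\fames$ term $\gf(k+1,1)$, and then take a linear combination that cancels it.

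\emph{First computation.} Starting from $D\gf(k)=k\,\gf\bi{k+1}{1}$ (the definition of $D$ in depth one) and using the depth-one swap $\gf\bi{k+1}{1}=\tfrac{1}{k!}\gf\bi{2}{k}$, I aim to rewrite $\gf\bi{2}{k}$ in terms of single-indexed symbols. The stuffle identity $\gf\bi{1}{0}\ast\gf\bi{1}{k}=\gf\bi{1,1}{0,k}+\gf\bi{1,1}{k,0}+\gf\bi{2}{k}$, combined with $\gf\bi{1}{k}=k!\,\gf(k+1)$ and the single-index stuffle $\gf(1)\gf(k+1)=\gf(1,k+1)+\gf(k+1,1)+\gf(k+2)$, reduces the problem to evaluating $\gf\bi{1,1}{0,k}$ and $\gf\bi{1,1}{k,0}$. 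For these I apply the swap to the generating series: in $\genG_2\bi{Y_1+Y_2,Y_1}{X_2,X_1-X_2}$, setting $X_1=X_2=0$ leaves $\sum_{a_1,a_2\geq 1}\gf(a_1,a_2)(Y_1+Y_2)^{a_1-1}Y_1^{a_2-1}$, and extracting the coefficients of $Y_1^0Y_2^k/k!$ and $Y_1^kY_2^0/k!$ yields $\gf\bi{1,1}{0,k}=k!\,\gf(k+1,1)$ and $\gf\bi{1,1}{k,0}=k!\sum_{j=1}^{k+1}\gf(k+2-j,j)$. After cancelling the $\gf(1,k+1)$ terms, one obtains the first identity
\[D\gf(k)=k\Bigl(\gf(k+2)-\gf(k+1,1)-\sum_{j=2}^{k}\gf(k+2-j,j)\Bigr).\]

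\emph{Second computation.} From~\eqref{eq:Donfil0} one has $D\gf(k)=\gf(z_2\ast z_k-z_2\shuffle z_k)$. The stuffle part is $z_2z_k+z_kz_2+z_{k+2}$. For $z_2\shuffle z_k=xy\shuffle x^{k-1}y$, a direct combinatorial enumeration of the $\binom{k+2}{2}$ interleavings, organised by which of the two input words contributes each of the two $y$'s, produces $z_2\shuffle z_k=\sum_{i=2}^{k-1}(i-1)z_iz_{k+2-i}+k\,z_kz_2+2k\,z_{k+1}z_1$. Applying the map $\gf$ then gives the second identity
\[D\gf(k)=\gf(k+2)+\gf(2,k)+(1-k)\gf(k,2)-\sum_{i=2}^{k-1}(i-1)\gf(i,k+2-i)-2k\,\gf(k+1,1).\]

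The coefficient of the non-$\fames$ term $\gf(k+1,1)$ is $-k$ in the first identity and $-2k$ in the second, so the combination ``twice the first minus the second'' has left-hand side $D\gf(k)$ (since $2-1=1$) and kills this obstruction on the right. Re-indexing $\sum_{i=2}^{k-1}(i-1)\gf(i,k+2-i)=\sum_{j=3}^{k}(k+1-j)\gf(k+2-j,j)$ and merging it with the contribution $-2k\sum_{j=2}^{k}\gf(k+2-j,j)$, together with the isolated $\gf(2,k)$ and $\gf(k,2)$ coefficients, collapses the result to the stated right-hand side. The main obstacle is bookkeeping: the swap computation of $\gf\bi{1,1}{k,0}$ and the enumeration of interleavings in $xy\shuffle x^{k-1}y$ both require careful case analysis, but once the two identities above are in hand the combination and simplification are routine algebra.
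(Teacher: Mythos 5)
Your proof is correct and is essentially the paper's: your two identities are precisely the $(k_1,k_2)=(1,k+1)$ and $(k_1,k_2)=(2,k)$ specializations (with $d_1=d_2=0$) of Proposition~\ref{prop:dshindep1}, and the paper likewise takes $2k$ times the first minus the second to cancel the non-admissible term $\gf(k+1,1)$. The only difference is one of packaging: you re-derive these two instances from scratch (via the swap on generating series for the first, and via~\eqref{eq:Donfil0} plus an explicit computation of $z_2\shuffle z_k$ for the second) instead of quoting Proposition~\ref{prop:dshindep1} directly.
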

\begin{proof}
Using Proposition~\ref{prop:dshindep1} for $d_1=d_2=0$ gives 
for $k_1,k_2\geq 1$ with $k_1+k_2\geq 3$
\begin{align}
\frac{(k_1+k_2-3)!}{(k_1-1)!(k_2-1)!}D \gf(k_1+k_2-2)&=\gf(k_1,k_2)+\gf(k_2,k_1)+\gf(k_1+k_2)\\&\qquad -\sum_{j=1}^{k_1+k_2-1}\!\left(\!\binom{j-1}{k_1-1}+\binom{j-1}{k_2-1}\!\right)\gf(j,k_1+k_2-j).
\end{align}
The formula in the proposition follows by taking $2k$-times the case $k_1=1, k_2=k+1$ and subtracting the case $k_1=2, k_2=k$.
\end{proof}

\begin{conjecture}\label{conj:espacesl2}
\begin{enumerate}[{\upshape(i)}]
\item The algebra~$\fames$ is an $\sltwo$-subalgebra of $\fmes$.
\item The $\Q$-linear map defined by 
    \begin{align*}
    \fames &\longrightarrow \ames\\
    \gf(k_1,\dots,k_r) &\longmapsto \mathbb{G}_{k_1,\dots,k_r}
    \end{align*}
    is an algebra isomorphism. In particular, $\ames$ is also an $\sltwo$-algebra.
\end{enumerate}   
\end{conjecture}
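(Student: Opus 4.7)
The conjecture has two parts, and I would attack them separately. For (i), closure of $\fames$ under $W$ is tautological since $\fames$ is weight-graded, and closure under $\deer$ is immediate from formula~\eqref{eq:deltainfil0}: every term on the right-hand side carries an indicator $\ind_{k_j = 1}$, so starting from indices all $\geq 2$ annihilates the entire expression. The main point is closure under $D$. The depth-one case $D\gf(k)\in\fames$ for $k\geq 2$ is already established by the preceding proposition. For higher depth, my plan is to exploit the identity~\eqref{eq:Donfil0},
\[ D\gf(k_1,\dots,k_r) \= \gf\bigl(z_2 \ast w - z_2 \shuffle w\bigr), \qquad w = z_{k_1}\cdots z_{k_r}, \]
and show that the right-hand side lies in $\fames$ when all $k_i\geq 2$. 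The stuffle term $z_2 \ast w$ is automatically in $\h^0$; only $z_2 \shuffle w$ produces inadmissible pieces, namely terms of the form $\gf(\dots, k_j+1, 1, k_{j+1},\dots)$ arising when the $y$ of $z_2 = xy$ lands inside a block $x^{k_j-1}y$. I would proceed by induction on depth, expanding the shuffle by its first letter so that the recursive piece of depth $r-1$ is admissible, and then use swap invariance to rewrite the remaining inadmissible single-$z_1$ insertion as an element of $\fames$.

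For (ii), the proposed map $\phi:\fames \to \ames$ factors through the surjection $\gf:\h^0 \twoheadrightarrow \fames$, so surjectivity is automatic. The algebra-homomorphism property with respect to the stuffle product is inherited from the known stuffle relations among the multiple Eisenstein series proved in~\cite{Ba3}. The non-trivial input for well-definedness is that the swap-invariance relations of $\fmes$, restricted to single-indexed admissible words, become valid identities among the classical $\mathbb{G}_{k_1,\dots,k_r}$. Two natural routes present themselves: work directly with the stuffle-regularized multiple Eisenstein series $\mathbb{G}^\ast$ of~\cite{Ba}, whose construction builds in precisely the double-indexed swap structure, and check that passing from $\mathbb{G}^\ast$ to $\mathbb{G}$ (i.e.\ specializing to single indices on admissible tuples) preserves all relations; or pass through the rational realization $\cmes$ of Theorem~\ref{thm:cmes} and match its image on $\fames$ with the $q$-expansions of $\mathbb{G}_{k_1,\dots,k_r}$ up to the normalization factors of $2\pi i$ and the constant-term multiple zeta values.

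The principal obstacle, by far, is the injectivity of $\phi$. This amounts to the statement that \emph{every} $\Q$-linear relation among the $\mathbb{G}_{k_1,\dots,k_r}$ is a consequence of the stuffle-and-swap relations encoded in $\fames$, which is the Eisenstein analogue of the notoriously open Ihara--Kaneko--Zagier picture for multiple zeta values and seems out of reach by current methods. A reasonable relative strategy is to assume the (conjectured) injectivity of $\cmes$ from Theorem~\ref{thm:cmes}, identify $\cmes(\fames)$ with an explicit $\Q$-rational form of $\ames$ inside $\Q\llbracket q\rrbracket$ via the combinatorial Eisenstein $q$-series of~\cite{BB}, and thereby reduce injectivity of $\phi$ to a purely combinatorial statement about the $q$-series~\eqref{def:big}. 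Once (ii) is granted, the $\sltwo$-structure in (i) transports automatically, giving $\ames$ the desired $\sltwo$-algebra structure, with $D$ corresponding to $q\frac{d}{dq}$ under $\cmes$ as stated in Theorem~\ref{thm:cmes}.
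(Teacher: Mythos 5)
The statement you are addressing is not proved in the paper: it is stated as Conjecture~\ref{conj:espacesl2} and left open, with the authors only remarking that for part (i) it suffices (by~\eqref{eq:deltainfil0}) to establish closure of $\fames$ under $D$. Your proposal correctly identifies exactly these reductions, but it does not close them, so it is a roadmap rather than a proof. Two smaller points first. Your claim that $\delta$ ``annihilates the entire expression'' on $\fames$ is not quite right: the first term of~\eqref{eq:deltainfil0} carries the indicator $\ind_{k_1=2}$, not $\ind_{k_j=1}$, so $\delta\gf(2,k_2,\dots,k_r)=-\tfrac12\gf(k_2,\dots,k_r)+\cdots$ is generally nonzero; the correct statement is that the output stays in $\fames$, which is what the paper asserts. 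Closure under $W$ and the depth-one case of closure under $D$ are indeed fine.

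The genuine gap is the inductive step for closure under $D$ in depth $r\geq 2$. Expanding $z_2\shuffle z_{k_1}\cdots z_{k_r}$ produces inadmissible words with an isolated $z_1$ inserted in interior or final position, and your plan is to ``use swap invariance to rewrite the remaining inadmissible single-$z_1$ insertion as an element of $\fames$.'' That rewriting is precisely the open content of the conjecture: swap invariance is a relation among the \emph{double-indexed} generating series $\genG_r$, and there is no known mechanism for expressing a general single-indexed $\gf(k_1,\dots,k_j,1,k_{j+1},\dots,k_r)$ in the span of admissible $\gf$'s (the depth-one identity $D\gf(k)\in\fames$ proved before the conjecture already requires a nontrivial combination of the depth-two double shuffle relations, and no analogue is known in higher depth). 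Similarly for (ii): well-definedness of $\gf(k_1,\dots,k_r)\mapsto\mathbb{G}_{k_1,\dots,k_r}$ would require that every stuffle-and-swap relation restricted to admissible single indices holds for the holomorphic functions $\mathbb{G}_{k_1,\dots,k_r}$, which is not established by the cited regularization results, and injectivity is an Eisenstein analogue of the extended double shuffle conjecture. Your ``relative strategy'' conditional on the conjectural injectivity of $\cmes$ is reasonable as a research direction but does not yield a proof.
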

Note that by~\eqref{eq:deltainfil0}, for (i) it suffices to show that $\fames$ is closed under the derivation~$D$.

\begin{remark} Notice that the $\sltwo$-structure on $\fmes$ does not restrict to the quotient, i.e., the space of formal multiple zeta values $ \fmz = \faktor{\fmes\,}{\mathfrak{N}}$ is not an $\sltwo$-algebra for the restricted operators. The derivation~$D$ is well-defined on the quotient and becomes the zero map on this quotient (c.f., Proposition~\ref{prop:Dinkerphi}), but the derivation~$\delta$ is not well-defined. For example, $\delta \gf\bi{2}{1} = \gf \bi{1}{0}$ and $\pi \gf\bi{2}{1}=0$, but $\fzeta(1) = \pi \gf \bi{1}{0}  \neq 1$.
\end{remark}

\section{Formal quasimodular forms}\label{sec:formalquasimodularforms}
In this section, we will be interested in a subalgebra of $\fmes$ isomorphic to the $\sltwo$-algebra of quasimodular forms:
\begin{definition}	We define the algebra of \emph{formal quasimodular forms $\fqmf$} as the smallest $\sltwo$-subalgebra of $\fmes$ which contains $\gf(2)$.
\end{definition}
 \begin{remark}
By Corollary~\ref{cor:formalgkrelations}, $\fqmf$ is a \emph{non-trivial} proper finitely generated $\sltwo$-subalgebra of $\fmes$. Are there more non-trivial subalgebras?  The first subalgebra one could consider is $\Q[ \gf(k) \mid k\geq 1]$.
But one can check that this algebra is not closed under the operator $\D$; for example, it does not contain $\D \gf(1)$. Secondly, note that by Proposition~\ref{prop:sltwosubalgebra} the algebra~$\fmesz$ is also an $\sltwo$-subalgebra. Conjecturally, this subalgebra is not a proper subalgebra (see Conjecture~\ref{conj:BBvI}).
In addition to the algebra~$\fames$ defined in~\eqref{eq:deffames}, the authors were unable to find other possible candidates for the $\sltwo$-subalgebras. In particular, it would be interesting to know if $\fqmf$ might be the only non-trivial finitely generated $\sltwo$-subalgebra of $\fmes$.
\end{remark}
As a consequence of the relations we proved in Section~\ref{sec:rel}, we obtain the following.

\begin{corollary}
The Ramanujan differential equations are satisfied in $\fmes$, i.e.,
\begin{align*}
    D\gf(2) &= 5 \gf(4) - 2 \gf(2)^{ 2}\,,\\
    D\gf(4) &= 14 \gf(6) - 8 \gf(2) \, \gf(4)\,,\\
    D\gf(6) &= \frac{120}{7} \gf(4)^2 - 12 \gf(2)\, \gf(6)\,. 
\end{align*}
\end{corollary}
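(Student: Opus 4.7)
The plan is to combine the explicit formula for the derivation $D$ in depth one with the recursions for $\gf(k)$ already established in Corollary~\ref{cor:mfprod}. By the defining formula~\eqref{eq:defD}, in depth one we have $D\gf(k) = k\,\gf\bi{k+1}{1}$, so the task reduces in each case to expressing $\gf\bi{k+1}{1}$ as a polynomial in $\gf(2),\gf(4),\gf(6)$, which is exactly what Corollary~\ref{cor:mfprod}(i) provides for even $k\geq 4$.

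For the first equation I would apply Corollary~\ref{cor:mfprod}(i) with $k=4$ to obtain $\gf\bi{3}{1} = \tfrac{5}{2}\gf(4) - \gf(2)^{\,2}$, and then multiply by $2$. For the second equation I would use the case $k=6$, which gives $\gf\bi{5}{1} = \tfrac{7}{2}\gf(6) - 2\gf(2)\gf(4)$, and then multiply by $4$. Both cases are purely mechanical, since all the substantial work has already been absorbed into Corollary~\ref{cor:mfprod}.

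The third equation requires one extra ingredient. Applying Corollary~\ref{cor:mfprod}(i) with $k=8$ gives $\gf\bi{7}{1} = \tfrac{9}{2}\gf(8) - 2\gf(2)\gf(6) - \gf(4)^{\,2}$, so $D\gf(6) = 6\gf\bi{7}{1} = 27\gf(8) - 12\gf(2)\gf(6) - 6\gf(4)^{\,2}$. The term $\gf(8)$ is not among the generators of $\fqmf$ as currently written, so I would eliminate it using Corollary~\ref{cor:mfprod}(ii) for $k=8$, which collapses to $\tfrac{21}{2}\gf(8) = 9\gf(4)^{\,2}$, i.e.\ $\gf(8) = \tfrac{6}{7}\gf(4)^{\,2}$. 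Substituting and simplifying yields $D\gf(6) = \tfrac{120}{7}\gf(4)^{\,2} - 12\gf(2)\gf(6)$, as desired.

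There is essentially no genuine obstacle: once Theorem~\ref{thm:relpevevk} has been proved, the Ramanujan equations follow as a direct corollary, with the only mild subtlety being the need to invoke the weight-$8$ instance of Corollary~\ref{cor:mfprod}(ii) to remove $\gf(8)$ from the final identity. The conceptual point worth emphasising is that, unlike in the classical derivation, no analytic input is required---swap invariance, the stuffle product, and the explicit formula for $D$ are sufficient to produce the Ramanujan system inside the purely formal algebra $\fqmf$.
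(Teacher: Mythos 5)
Your proposal is correct and takes essentially the same route as the paper: both rewrite Corollary~\ref{cor:mfprod}(i) via the depth-one identity $D\gf(k)=k\,\gf\bi{k+1}{1}$ and then invoke the weight-$8$ case of Corollary~\ref{cor:mfprod}(ii), i.e.\ $\gf(8)=\tfrac{6}{7}\gf(4)^{2}$, to eliminate $\gf(8)$ from the last equation. All three computations check out.
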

\begin{proof}
Recall Corollary~\ref{cor:mfprod}(i), which is equivalent to 	
\begin{align*}
    D\gf(k) = \frac{k(k+3)}{2}\gf(k+2)- k \sum_{\substack{k_1+k_2=k+2 \\ k_1, k_2 \text{ even} }} \gf(k_1) \gf(k_2)\,.
\end{align*}
The formulas for $D\gf(2)$ and $D\gf(4)$ follow directly from this and the formula for $	D\gf(6)$ follows after using $\gf(8) = \frac{6}{7} \gf(4)^2$, which is a consequence of Corollary~\ref{cor:mfprod}(ii).
\end{proof}

\begin{theorem}\label{thm:fqmfcongqmf} We have $\fqmf \cong \qmf$ as $\sltwo$-algebras.
\end{theorem}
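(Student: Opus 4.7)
My plan is to prove this in three steps: first, identify $\fqmf$ explicitly as the polynomial subalgebra $\Q[\gf(2),\gf(4),\gf(6)]$ of $\fmes$; then build a candidate map $\psi:\qmf\to\fqmf$ using the classical fact that $\qmf$ is free on $G_2,G_4,G_6$, and verify $\sltwo$-equivariance on generators; finally, establish injectivity by composing with the realization $\cmes$ of Theorem~\ref{thm:cmes}.

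For the first step, the Ramanujan equations just proven give $\gf(4)=\frac{1}{5}(D\gf(2)+2\gf(2)^2)$ and then $\gf(6)=\frac{1}{14}(D\gf(4)+8\gf(2)\gf(4))$, showing $\gf(4),\gf(6)\in\fqmf$. Conversely, set $A\df\Q[\gf(2),\gf(4),\gf(6)]$ and check it is closed under the $\sltwo$-action: closure under $W$ is immediate as $W$ is diagonal in the weight; closure under $D$ reduces via the Ramanujan equations to showing $\gf(2m)\in A$ for $m\geq 4$, which follows by induction on $m$ from Corollary~\ref{cor:mfprod}(ii); and closure under $\delta$ follows from the direct computations $\delta\gf(2)=-\frac{1}{2}$, $\delta\gf(4)=\delta\gf(6)=0$ read off formula~\eqref{eq:deltainfil0} (in depth one only the $\ind_{k_1=2}$ term contributes), combined with $\delta$ being a derivation. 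Hence $\fqmf=A$.

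For the second step, since $\qmf=\Q[G_2,G_4,G_6]$ is a free polynomial algebra (a classical fact), the assignment $G_k\mapsto\gf(k)$ for $k\in\{2,4,6\}$ extends to a well-defined algebra homomorphism $\psi:\qmf\to\fqmf$, which is surjective by the first step. The classical Ramanujan differential equations for $G_k$ are identical in form to those just proven for $\gf(k)$, so $\psi$ intertwines $D$; the weight grading matches, so $\psi$ intertwines $W$; and the classical values $\delta G_2=-\frac{1}{2}$, $\delta G_4=\delta G_6=0$ agree with their formal counterparts, so $\psi$ intertwines $\delta$ as well.

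The main obstacle is injectivity of $\psi$. Here I would compose with the realization $\cmes:\fmes\to\Q\llbracket q\rrbracket$ from Theorem~\ref{thm:cmes}, which sends $\gf(k)$ to the classical quasimodular form $G_k$ and therefore restricts to a map $\fqmf\to\qmf$. The composition $\cmes\circ\psi:\qmf\to\qmf$ sends $G_k\mapsto\gf(k)\mapsto G_k$, hence equals the identity on a set of algebra generators and so is the identity on $\qmf$. Therefore $\psi$ is injective, and it is the desired $\sltwo$-algebra isomorphism.
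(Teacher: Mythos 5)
Your proposal is correct and follows essentially the same route as the paper: both identify $\fqmf$ with $\Q[\gf(2),\gf(4),\gf(6)]$ via the Ramanujan equations and $\sltwo$-closure, and both use the realization $\cmes$ of Theorem~\ref{thm:cmes} to transport the classical algebraic independence of $G_2,G_4,G_6$ (your injectivity of $\psi$ is the paper's freeness of $\Q[\gf(2),\gf(4),\gf(6)]$ in disguise). Your write-up is merely more explicit about the closure checks and the construction of the map.
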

\begin{proof}
By the previous corollary, clearly $\gf(2),\gf(4), \gf(6)$ are elements of $\fqmf$. Moreover, $\gf(2), \gf(4)$ and $\gf(6)$ are algebraically independent over $\Q$ as follows from Theorem~\ref{thm:cmes} and the well-known fact that this is true for $G(2),G(4),G(6) \in \qmf \subset \Q\llbracket q \rrbracket$. Hence, the algebra $\Q[\gf(2),\gf(4), \gf(6)]$ is free and contained in $\fqmf$. 
Since this is also an $\sltwo$-subalgebra of $\fmes$, on which the action by the $\sltwo$-triple $D,W,\deer$ is the same, the result follows. 
\end{proof}

The following results on formal (quasi)modular forms follow directly from the previous result, as these statements are known to be true for (quasi)modular forms. Nevertheless, we want to formulate these results in the language of formal objects and give a self-contained proof. For example, the following Corollary follows from Corollary~\ref{cor:mfprod}. 
\begin{corollary}\label{cor:formalgkrelations} 
The algebra of formal quasimodular forms $\fqmf$ satisfies:
\begin{enumerate}[{\upshape(i)}] 
\item $\gf(k)\in \fqmf$ for all even $k\geq 2$. 
\item 	$\fqmf=\Q[\gf(2),\gf(4),\gf(6)] = \Q[\gf(2),\D \gf(2),\D^2 \gf(2)]$.
\item The Chazy equation is satisfied, i.e.,
    \begin{align}\label{eq:chazy}
        D^3\gf(2) + 24\, \gf(2)\,  D^2\gf(2) - 36 (D\gf(2))^{2} = 0\,.
    \end{align}
\end{enumerate}
\end{corollary}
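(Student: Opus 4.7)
The plan is to prove the three statements in order, using the Ramanujan differential equations together with Corollary~\ref{cor:mfprod} as the main ingredients, and invoking the fact that $\fqmf$ is the smallest $\sltwo$-subalgebra containing $\gf(2)$.

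For (i) I would exploit that $\fqmf$, being an $\sltwo$-subalgebra, is closed under $D$. The first Ramanujan equation $D\gf(2) = 5\gf(4) - 2\gf(2)^2$ then forces $\gf(4) \in \fqmf$, and the second, $D\gf(4) = 14\gf(6) - 8\gf(2)\gf(4)$, forces $\gf(6) \in \fqmf$. For the remaining even weights I would proceed by strong induction on $k\geq 8$ using Corollary~\ref{cor:mfprod}(ii): the coefficient $(k+1)(k-1)(k-6)/12$ is nonzero for even $k\geq 8$, so the recursion expresses $\gf(k)$ as a $\Q$-linear combination of products $\gf(k_1)\gf(k_2)$ with $k_1+k_2=k$ and $k_1,k_2\geq 4$ even, each of which lies in $\fqmf$ by induction.

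For (ii), the inclusion $\Q[\gf(2),\gf(4),\gf(6)]\subseteq \fqmf$ is immediate from (i). For the reverse inclusion I would verify that $R:=\Q[\gf(2),\gf(4),\gf(6)]$ is itself an $\sltwo$-subalgebra containing $\gf(2)$ and invoke minimality of $\fqmf$. Closure under $D$ is exactly the Ramanujan system; closure under $W$ is automatic because the three generators are weight-homogeneous of weights $2,4,6$; closure under $\deer$ follows from formula~\eqref{eq:deltainfil0}, which on the single-index generators gives $\deer\gf(2)=-\tfrac12$ and $\deer\gf(4)=\deer\gf(6)=0$ (all other terms in~\eqref{eq:deltainfil0} vanish since depth is one and the leading index is not equal to one). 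For the second equality, solving the first Ramanujan equation yields $\gf(4)=\tfrac{1}{5}\bigl(D\gf(2)+2\gf(2)^2\bigr)$; differentiating and substituting into the second then expresses $\gf(6)$ in $\Q[\gf(2),D\gf(2),D^2\gf(2)]$. Conversely, $D\gf(2)$ and $D^2\gf(2)$ lie in $\Q[\gf(2),\gf(4),\gf(6)]$ by the closure just established.

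For (iii), I would first record
\[
D^2\gf(2) \= 5D\gf(4)-4\gf(2)D\gf(2) \= 70\gf(6)-40\gf(2)\gf(4)-4\gf(2)D\gf(2),
\]
obtained from the first two Ramanujan equations. Differentiating once more and substituting the third equation $D\gf(6)=\tfrac{120}{7}\gf(4)^2-12\gf(2)\gf(6)$, I would express $D^3\gf(2)$ as a polynomial in $\gf(2),\gf(4),\gf(6)$ and $D\gf(2)$. Forming the left-hand side of~\eqref{eq:chazy} and substituting $D\gf(2)=5\gf(4)-2\gf(2)^2$ everywhere reduces the whole expression to a polynomial in $\gf(2),\gf(4),\gf(6)$; collecting coefficients, the monomials $\gf(4)^2$, $\gf(2)\gf(6)$, $\gf(2)^2\gf(4)$ and $\gf(2)^4$ all cancel to zero. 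The only obstacle is this final bookkeeping, but it is mechanical once the three Ramanujan equations have been written out.
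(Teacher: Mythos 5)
Your proof is correct and follows essentially the route the paper intends: the paper gives no written proof of this corollary beyond the remark that it ``follows from Corollary~\ref{cor:mfprod}'' (equivalently, from the Ramanujan equations derived from it), and your argument is exactly that route carried out in detail, with part~(ii) mirroring the minimality argument already present in the proof of Theorem~\ref{thm:fqmfcongqmf}. I checked the final bookkeeping in~(iii): writing $P=\gf(2)$, $Q=\gf(4)$, $R=\gf(6)$, one gets $D^3P=900Q^2-1680PR+720P^2Q-48P^4$, and the coefficients of $Q^2$, $PR$, $P^2Q$, $P^4$ in the Chazy combination indeed all vanish.
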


\begin{definition} We define the algebra of \emph{formal modular forms} by $\fmf = \ker \deer_{\mid \fqmf}$.  Write $\fmf_k$ to denote the space of all formal modular forms of weight $k\geq 0$.
\end{definition}

\begin{proposition}\label{prop:mfisom}
    We have $\fmf=\Q[\gf(4),\gf(6)] \cong \mf$.
\end{proposition}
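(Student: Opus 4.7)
The plan is to first compute $\deer$ on the depth-one generators $\gf(k)$, then deduce the structure of $\fmf$ from the algebraic independence of $\gf(2),\gf(4),\gf(6)$, and finally transport the conclusion to the classical side via Theorem~\ref{thm:fqmfcongqmf}.

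First I would apply the explicit formula~\eqref{eq:deltainfil0} for $\deer$ on $\fmesz$ to a depth-one symbol $\gf(k)$. All four sums over $j$ in~\eqref{eq:deltainfil0} are empty when $r=1$, and the term $\tfrac{1}{4}\ind_{k_1=k_2=1}\gf(k_3,\dots,k_r)$ requires $r\geq 2$, so only the first term survives. This gives $\deer\gf(k) = -\tfrac{1}{2}\ind_{k=2}$, hence $\deer\gf(2) = -\tfrac{1}{2}$ and $\deer\gf(4) = \deer\gf(6) = 0$.

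Next I would prove the equality $\fmf = \Q[\gf(4),\gf(6)]$. The inclusion $\Q[\gf(4),\gf(6)] \subseteq \fmf$ is immediate from the previous step together with the Leibniz rule, since $\deer$ is a derivation on $\fqmf$. For the reverse inclusion I would invoke the algebraic independence of $\gf(2),\gf(4),\gf(6)$ (established in the proof of Theorem~\ref{thm:fqmfcongqmf}) to expand any $f \in \fqmf$ uniquely as
\begin{equation*}
f = \sum_{i\geq 0} p_i\,\gf(2)^{i}, \qquad p_i \in \Q[\gf(4),\gf(6)].
\end{equation*}
Applying $\deer$ and using $\deer p_i = 0$ together with $\deer\gf(2) = -\tfrac{1}{2}$ yields
\begin{equation*}
\deer f = -\tfrac{1}{2}\sum_{i\geq 1} i\,p_i\,\gf(2)^{i-1},
\end{equation*}
which vanishes if and only if $p_i = 0$ for all $i\geq 1$, i.e.\ $f = p_0 \in \Q[\gf(4),\gf(6)]$. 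An equivalent, cleaner way to package this step is via Lemma~\ref{lem:fundamentallemma} applied to the weight-graded algebra $\fqmf$, the weight $-2$ derivation $d = \deer$, and the element $a = -2\gf(2)$ (which satisfies $\deer(a) = 1$); its conclusion is precisely that $\fqmf$ is a free polynomial ring in $a$ over $\ker\deer|_{\fqmf} = \fmf$, forcing the unique $\fmf$-coefficients of the expansion of $f$ in powers of $a$ to be the polynomials $p_i$ above.

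Finally, for the isomorphism $\fmf \cong \mf$, Theorem~\ref{thm:fqmfcongqmf} supplies an $\sltwo$-algebra isomorphism $\phi:\fqmf \to \qmf$, which in particular intertwines the derivation~$\deer$ on both sides. Therefore $\phi$ restricts to an isomorphism $\ker\deer|_{\fqmf} \cong \ker\deer|_{\qmf}$, and the right-hand side is the classical algebra $\mf = \Q[G_4,G_6]$ of modular forms, by the standard characterization of modular forms as the kernel of $\deer$ inside $\qmf$ recalled in Section~\ref{sec:sl2algebras}. I do not expect any genuine obstacle: all the hard work is contained in Proposition~\ref{prop:deer} and Theorem~\ref{thm:fqmfcongqmf}; what remains is a short algebraic extraction.
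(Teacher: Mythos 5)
Your proposal is correct and follows essentially the same route as the paper: compute $\deer\gf(2)=-\tfrac12$, $\deer\gf(4)=\deer\gf(6)=0$ from~\eqref{eq:deltainfil0}, deduce $\ker\deer_{\mid\fqmf}=\Q[\gf(4),\gf(6)]$ from the polynomial structure $\fqmf=\Q[\gf(2),\gf(4),\gf(6)]$, and transport to $\mf$ via the $\sltwo$-isomorphism of Theorem~\ref{thm:fqmfcongqmf}. You merely spell out the expansion-in-powers-of-$\gf(2)$ argument that the paper leaves implicit behind ``follows directly from Corollary~\ref{cor:formalgkrelations}''.
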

\begin{proof}
The first isomorphism follows directly from Corollary~\ref{cor:formalgkrelations} after computing $\deer(\gf(4))=0$ and $\deer \gf(6) =0$. The second isomorphism then follows from Theorem~\ref{thm:fqmfcongqmf}.
\end{proof}

\begin{remark} The definition of $\fmf$ raises the question what the kernel of $\delta$ is when considered on the whole space~$\fmes$. By the direct formula in the proof of Proposition~\ref{prop:deer}, we see that it contains all $\gf(k_1,\dots,k_r)$, with $k_1\geq 3, k_2,\dots,k_r\geq 2$; these are the indices for which the classical multiple Eisenstein series~\eqref{eq:defames} converge absolutely. 
In fact, restricted to $\fames$, the elements $\gf(k_1,\dots,k_r)$ with $k_1\geq 3, k_2,\dots,k_r\geq 2$ capture the entire kernel of~$\delta$, and every formal multiple Eisenstein series in $\fames$ 
can be written as a polynomial in~$\gf(2)$ with coefficients given by these elements. This is similar to a property of quasimodular forms, which can be written uniquely as a polynomial in~$\mathbb{G}_2$ with modular coefficients. 
However, note that the kernel of~$\delta$ on $\fmes$ also contains elements with non-admissible indices, for example, $\gf(1,2,1) \in \ker(\delta)$. 
\end{remark}

The following corollary follows from Proposition~\ref{prop:mfisom} and the fact that the corresponding statement is true for modular forms. For modular forms, this result follows from~\cite{KZ} and heavily depends on the analytic theory of modular forms, in particular on Rankin's method. It is an interesting open question to obtain a \emph{combinatorial} proof of the following result, that is, by only using the stuffle product and the swap.  
\begin{corollary} For $k\geq 4$ we have
\begin{align}
        \fmf_k = \Q \gf(k) + \langle \gf(a)
        \,\gf(b) \mid a+b =k, a,b\geq 4 \text{ even} \rangle_\Q \,.
\end{align}
In particular, $\fmf \subset \fqmf \subset \fil{\dep}{2}\fmes$.
\end{corollary}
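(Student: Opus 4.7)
The strategy is to pull back the classical structure theorem for modular forms via the algebra isomorphism $\fmf \cong \mf$ of Proposition~\ref{prop:mfisom}, and then verify the two inclusions $\fmf \subset \fqmf \subset \fil{\dep}{2}\fmes$ separately.

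For the main formula, I would invoke Proposition~\ref{prop:mfisom}, which provides a graded $\Q$-algebra isomorphism $\fmf \to \mf$ sending $\gf(4)\mapsto G_4$, $\gf(6) \mapsto G_6$, and hence $\gf(k) \mapsto G_k$ for every even $k \geq 4$ (using the matching recursive relations of Corollary~\ref{cor:mfprod} and~\eqref{eq:classicalgkrelations}). Under this isomorphism the claim reduces to the classical statement
\[ \mf_k = \Q\, G_k + \langle G_a G_b \mid a+b=k,\ a,b \geq 4 \text{ even}\rangle_\Q \qquad (k \geq 4 \text{ even}),\]
which is proved in~\cite{KZ} via Rankin's method. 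For odd $k$ or $k=2$ both sides vanish and the identity holds trivially, while for $k=4$ the right-hand side is just $\Q\gf(k)$. Pulling back along the graded isomorphism yields the desired formula for $\fmf_k$.

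For the chain of inclusions, $\fmf \subset \fqmf$ is immediate from the definition $\fmf = \ker \delta_{|\fqmf}$. For $\fqmf \subset \fil{\dep}{2}\fmes$, I would combine Corollary~\ref{cor:formalgkrelations}(ii) with the decomposition $\fqmf = \bigoplus_{n\geq 0} \fmf \cdot \gf(2)^n$ afforded by the $\sltwo$-structure. Each factor $\fmf$ lies in $\fil{\dep}{2}\fmes$ by the main formula already proved, so the task reduces to checking that each product $f_n\, \gf(2)^n$ with $f_n \in \fmf$ remains in $\fil{\dep}{2}\fmes$. For this I would induct on $n$, using Corollary~\ref{cor:eulerrelation} to express $\gf(2)^n$ in terms of $\gf(2n)$ modulo $D\fqmf$, and then the Ramanujan differential equations to recursively rewrite derivatives of lower-weight quasimodular forms, reducing everything to $\gf$-symbols of depth at most two via swap-invariance identities.

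The main obstacle is this last depth reduction. A naive stuffle expansion produces $\gf$-symbols of depth $n$ (for example, $\gf(2)^3$ contains $6\,\gf(2,2,2)$), and collapsing them into depth at most two requires the non-trivial relations flowing from swap invariance. The first part of the statement, meanwhile, is essentially a translation of a deep analytic result (Kohnen--Zagier's computation using Rankin's method), and as the authors remark, a self-contained combinatorial proof using only the stuffle and the swap remains an interesting open problem.
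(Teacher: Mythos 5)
Your derivation of the displayed formula is exactly the paper's route: Proposition~\ref{prop:mfisom} identifies $\fmf=\Q[\gf(4),\gf(6)]$ with $\mf$ compatibly with the weight grading and with $\gf(k)\mapsto G_k$, and the formula is then the pullback of the Kohnen--Zagier theorem. (One small slip: for odd $k$ the two sides do \emph{not} both vanish --- $\gf(k)\neq 0$ for odd $k$ in $\fmes$, as one sees from the realization $\varphi_G$ --- so the statement is to be read for even $k$, as in the classical source; this does not affect anything.)

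The genuine gap is in the inclusion $\fqmf \subset \fil{\dep}{2}\fmes$. You reduce it to showing $f\cdot\gf(2)^n\in\fil{\dep}{2}\fmes$ for $f\in\fmf$ via the decomposition $\fqmf=\bigoplus_n \fmf\cdot\gf(2)^n$, and then you correctly observe that the stuffle expansion of $\gf(2)^n$ produces depth-$n$ words and that collapsing them needs nontrivial swap relations --- but you never carry out that collapse, and it is not clear how to do so directly. The argument goes through cleanly if you use the \emph{other} classical structure theorem for quasimodular forms, namely $\qmf_k=\bigoplus_{j=0}^{k/2-1}D^j\mf_{k-2j}\oplus\Q\, D^{k/2-1}G_2$, which transports to $\fqmf_k=\bigoplus_{j}\D^j\fmf_{k-2j}\oplus\Q\,\D^{k/2-1}\gf(2)$ via the $\sltwo$-isomorphism of Theorem~\ref{thm:fqmfcongqmf}. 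Since $\D$ visibly preserves the depth filtration by~\eqref{eq:Donwords}, and since $\fmf\subset\fil{\dep}{2}\fmes$ by the displayed formula (each $\gf(a)\gf(b)$ is a stuffle product of two depth-one symbols) while $\gf(2)$ has depth one, every summand lands in $\fil{\dep}{2}\fmes$. This replaces your problematic depth reduction of $\gf(2)^n$ by the trivial observation that $\D$ does not raise depth; no combinatorial use of the swap is needed beyond what already went into Corollary~\ref{cor:mfprod}.
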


Recall the projection $\proj: \fmes \rightarrow \fmz$ given by~\eqref{eq:pi}. 
\begin{definition} We define the algebra of $\emph{formal cusp forms}$ by $\fs = \ker \proj_{|\fmf}$. Write $\fs_k$ to denote the space of all formal cusp forms of weight $k\geq 0$.
\end{definition}

The first example of a non-zero formal cusp form appears in weight $12 = \operatorname{lcm}(4,6)$.
\begin{align*}
    \fDelta  &=  \frac{\eis(4)^{3}- \eis(6)^{2}}{1728}= 2400\cdot 6! \cdot \gf(4)^{3} - 420 \cdot 7! \cdot \gf(6)^{2}\,,
\end{align*}
where we write $\eis(k) = - \frac{2 k!}{B_k} \gf(k)$ for even $k\geq 2$. These elements correspond to the Eisenstein series $E_k\mspace{1mu}$, which all have constant term $1$.
\begin{proposition} We have $\fDelta \in \fs_{12}$ and $\D \fDelta = \eis(2) \fDelta$. 
\end{proposition}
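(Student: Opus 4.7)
The statement has two parts: first, that $\fDelta \in \fs_{12}$, meaning $\fDelta$ is a weight-$12$ formal cusp form, and second, the differential identity $\D\fDelta = \eis(2)\fDelta$. The weight claim is immediate, since $\gf(4)^3$ and $\gf(6)^2$ are both homogeneous of weight $12$. Moreover, $\fDelta \in \fmf$ by Proposition~\ref{prop:mfisom}, which identifies $\fmf$ with $\Q[\gf(4),\gf(6)]$, so $\deer$ kills any polynomial in $\gf(4)$ and $\gf(6)$.

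For $\pi\fDelta = 0$, the plan is to exploit the generalized Euler relation (Corollary~\ref{cor:eulerrelation}) together with Proposition~\ref{prop:Dinkerphi}. Since the correction term $Q_{2m}$ lies in $\D\fqmf \subset \ker\pi$, one obtains $\pi\gf(2m) = -\frac{B_{2m}}{2(2m)!}(-24\fzeta(2))^m$. Applied to the normalization $\eis(2m) = -\frac{2(2m)!}{B_{2m}}\gf(2m)$ this yields the clean identity $\pi\,\eis(2m) = (-24\fzeta(2))^m$. Consequently $\pi\,\eis(4)^3 = (-24\fzeta(2))^6 = \pi\,\eis(6)^2$, so $\pi(\eis(4)^3 - \eis(6)^2) = 0$, which establishes $\pi\fDelta = 0$ and completes the proof that $\fDelta \in \fs_{12}$.

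For the identity $\D\fDelta = \eis(2)\fDelta$, the plan is a direct computation using the Ramanujan differential equations proved earlier in this section. Setting $a = \gf(4)$, $b = \gf(6)$, $E = \gf(2)$, one first observes that $\eis(2) = -24\gf(2) = -24E$ and that $\fDelta$ is a scalar multiple of $40a^3 - 49b^2$ (indeed, $2400\cdot 6! = 43200\cdot 40$ and $420\cdot 7! = 43200\cdot 49$). Expanding $\D(40a^3 - 49b^2) = 120\,a^2\,\D a - 98\,b\,\D b$ and substituting $\D a = 14b - 8Ea$ and $\D b = \frac{120}{7}a^2 - 12Eb$, the cross-terms in $a^2 b$ have coefficients $120\cdot 14$ and $98\cdot \tfrac{120}{7}$, both equal to $1680$, and hence cancel. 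What remains is exactly $-24E(40a^3 - 49b^2)$, which is $\eis(2)\fDelta$.

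The only possible obstacle is bookkeeping in this last cancellation, which is really a formal incarnation of the classical identity $q\frac{d}{dq}\Delta = E_2\Delta$ for the modular discriminant, now transported to $\fmes$. All other ingredients---the weight, the annihilation by $\deer$, and the vanishing under $\pi$---follow cleanly from results already established in the paper.
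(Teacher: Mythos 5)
Your proof is correct, but it takes a genuinely different route from the paper's. For the vanishing $\pi\fDelta=0$, the paper rewrites $\fDelta$ as an explicit polynomial in $\gf(2)$, $D\gf(2)$, $D^2\gf(2)$ and observes that no pure $\gf(2)^6$ term appears, so every monomial contains a derivative and hence lies in the ideal $\mathfrak{N}$; you instead push the generalized Euler relation (Corollary~\ref{cor:eulerrelation}) through $\pi$, using that the correction terms $Q_{2m}\in D\fqmf$ die by Proposition~\ref{prop:Dinkerphi}, to get the clean identity $\pi\,\eis(2m)=(-24\fzeta(2))^m$ and conclude $\pi(\eis(4)^3-\eis(6)^2)=(-24\fzeta(2))^6-(-24\fzeta(2))^6=0$. (This uses that $\pi$ is an algebra homomorphism, which holds since $\mathfrak{N}$ is an ideal.) For the differential identity, the paper differentiates its explicit polynomial expression and invokes the Chazy equation, whereas you compute directly from the Ramanujan equations $\D\gf(4)=14\gf(6)-8\gf(2)\gf(4)$ and $\D\gf(6)=\tfrac{120}{7}\gf(4)^2-12\gf(2)\gf(6)$; your arithmetic checks out (the $a^2b$ cross-terms both carry coefficient $1680$ and cancel, leaving $-24\gf(2)(40a^3-49b^2)=\eis(2)\fDelta$). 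Your route for the first part is arguably more conceptual, since it isolates exactly which input ($\pi\,\eis(2m)=(-24\fzeta(2))^m$) makes $\fDelta$ a cusp form; the paper's route has the advantage of exhibiting $\fDelta$ concretely inside $\Q[\gf(2),D\gf(2),D^2\gf(2)]$, which it then reuses for the second part. Both arguments rest only on results already established in the paper, so there is no gap.
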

\begin{proof}
    Using that
\[      \gf(4) =  \frac{1}{5} D\gf(2)+\frac{2}{5} \gf(2)^{ 2},  \qquad
     \gf(6)=    \frac{1}{70} D^2\gf(2)+ \frac{6}{35} \gf(2) D\gf(2)+\frac{8}{35} \gf(2)^{3},
     \]
we can write $\fDelta$ explicitly as a polynomial in $\gf(2), D\gf(2), D^2\gf(2)$ as follows		
\begin{multline}
        \frac{1}{432} \fDelta = 
        48 \gf(2)^2 (D\gf(2))^2
        +32(D\gf(2))^3
        -32 \gf(2)^3 D^2\gf(2)\\
        -24 \gf(2) (D\gf(2)) (D^2\gf(2)) 
        -(D^2\gf(2))^2 \,.
    \end{multline}
In particular we see that no term with $\gf(2)^6$ appears, i.e., all remaining terms are elements in the ideal $\mathfrak{N}$ (see~\eqref{eq:N}) and therefore $\fDelta \in \ker \proj$. The second statement follows by taking the derivative of this equation and then using the Chazy equation~\eqref{eq:chazy}.
\end{proof}

\begin{proposition}
    We have $\fmf_k= \Q \gf(k) \oplus  \fs_k\mspace{1mu}$.
\end{proposition}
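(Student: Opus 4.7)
The plan is to use the explicit description $\fmf_k = \Q\gf(k) + \langle \gf(a)\gf(b) \mid a+b=k,\,a,b\geq 4 \text{ even}\rangle_\Q$ from the previous corollary, together with the Euler formula for formal multiple zeta values, to show that $\pi\colon \fmf_k \to \fmz$ has one-dimensional image spanned by $\fzeta(2)^{k/2}$ and that $\gf(k)$ projects to a nonzero element of this image. The direct sum decomposition then falls out of the standard splitting of a surjection onto a one-dimensional space by choosing a preimage.

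First I would apply $\pi$ to the generating set of $\fmf_k$: the image is spanned by $\fzeta(k)$ and the products $\fzeta(a)\fzeta(b)$ with $a+b=k$, $a,b\geq 4$ even. Applying the Euler relation for formal multiple zeta values (obtained by projecting Corollary~\ref{cor:eulerrelation} to $\fmz$ via Proposition~\ref{prop:Dinkerphi}, which kills the $Q_{2m}\in D\fqmf$ term) gives
\[ \fzeta(2m) \= c_m\,\fzeta(2)^m, \qquad c_m \df -\tfrac{B_{2m}}{2(2m)!}(-24)^m \neq 0 \qquad (m\geq 1).\]
In particular every generator of $\pi(\fmf_k)$ is a rational multiple of $\fzeta(2)^{k/2}$, so $\pi(\fmf_k)\subseteq \Q\,\fzeta(2)^{k/2}$.

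Next I would argue that $\pi(\gf(k)) = \fzeta(k) = c_{k/2}\,\fzeta(2)^{k/2}$ is nonzero. This follows by composing with the realization $\pi_\mz\colon \fmz\to\mz$ mentioned after Theorem~\ref{thm:mzvrealization}: the image $\pi_\mz(\fzeta(k)) = \zeta(k)$ is nonzero for even $k\geq 2$, hence $\fzeta(k)\neq 0$ in $\fmz$, and so $\pi(\gf(k))\neq 0$ in $\pi(\fmf_k)$. Consequently $\pi|_{\fmf_k}$ is surjective onto $\Q\fzeta(2)^{k/2}$ and $\Q\gf(k)\cap \fs_k = 0$.

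Finally, for any $f\in \fmf_k$, write $\pi(f) = \lambda\,\fzeta(2)^{k/2}$ for some $\lambda\in\Q$ and set $\mu \df \lambda/c_{k/2}$. Then $\pi(f-\mu\,\gf(k)) = 0$, so $f-\mu\gf(k)\in \fs_k$ and $f = \mu\gf(k) + (f-\mu\gf(k))\in \Q\gf(k)+\fs_k$. Combined with the vanishing intersection, this yields $\fmf_k = \Q\gf(k)\oplus\fs_k$. The only genuinely non-formal input is the nonvanishing of $\fzeta(k)$, which we reduce to the classical nonvanishing of $\zeta(k)$ via the realization; everything else is a direct assembly of previously established formulas.
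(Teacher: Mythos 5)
Your proof is correct, and its core is the same as the paper's: use the generalized Euler relation together with $D\fqmf\subset\ker\pi$ to see that $\pi(\fmf_k)$ is the line $\Q\,\fzeta(2)^{k/2}$, that $\gf(k)$ maps to a nonzero point of it, and then split off $\fs_k=\ker\pi_{|\fmf_k}$ by subtracting a suitable multiple of $\gf(k)$. The differences lie in the inputs. The paper writes $f\in\fmf_k\subset\fqmf=\Q[\gf(2),D\gf(2),D^2\gf(2)]$ as $a\,\gf(2)^{k/2}+f_1$ with $f_1\in\ker\pi$, which is purely formal; you instead span $\fmf_k$ by $\gf(k)$ and the products $\gf(a)\gf(b)$ via the preceding corollary, which the paper only obtains by transporting the Kohnen--Zagier theorem (Rankin's method) through the isomorphism with classical modular forms --- a heavier, non-formal input that is also unnecessary here, since $\fmf=\Q[\gf(4),\gf(6)]$ from Proposition~\ref{prop:mfisom} would give you a spanning set of $\fmf_k$ by monomials $\gf(4)^a\gf(6)^b$ serving exactly the same purpose. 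On the other hand, you are more careful than the paper on one point: the nonvanishing of $\pi(\gf(k))$, equivalently of $\fzeta(2)^{k/2}$ in $\fmz$, which the paper leaves implicit in the assertion $\gf(k)\notin\fs_k$, is justified in your argument by composing with the realization $\pi_\mz:\fmz\to\mz$ and invoking $\zeta(k)\neq 0$. Both proofs are valid; yours trades a lighter formal input for a heavier analytic one in the spanning step, and repays it with a cleaner treatment of the nondegeneracy step.
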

\begin{proof}
    Every element $f \in \fmf_k$ can be written as a polynomial in $\gf(2), D\gf(2), D^2\gf(2)$. Suppose that $f = a \gf(2)^{\frac{k}{2}} + f_1$ for some $a\in \Q$, where $f_1$ contains monomials which contain $D\gf(2)$ or $D^2 \gf(2)$, i.e., $f_1 \in \ker \pi$. By Corollary~\ref{cor:eulerrelation} we have $\gf(k) = - \frac{B_{k}}{2 k!} (-24 \gf(2))^{\frac{k}{2}} + g_1$ for some $g_1\in \ker \pi$, i.e., $\gf(k) \notin \fs_k\mspace{1mu}$.
    We conclude $f + a\frac{2 k!}{B_k } (-24)^{-\frac{k}{2}}\gf(k)\in \fs_k$ from which the statement follows. 
\end{proof}

Analogously, the proof of many other statements known for (quasi)modular forms can be translated to the formal setup. We invite the interested reader or motivated students to do so.

\section{Balanced setup}\label{sec:balanced}
In this section, we will introduce another setup that can be used to describe the algebra~$\fmes$. This setup is motivated by the idea of making the involution $\sigma$ simpler on the level of coefficients as it was done in~\cite{Zu} for the $q$-series $g$ given in~\eqref{def:big}. It was introduced by Burmester in her thesis~\cite{Bu1} and then studied in~\cite{Bu2} and~\cite{Bu3}.
Let $\QB$ be the non-commutative polynomial ring in the alphabet $\mathcal{B}=\{b_0,b_1,\ldots \}$, and let $\mathcal{B}^\ast$ be the free monoid generated by the elements of $\mathcal{B}$, to which we refer to as \emph{words}. On~$\QB$ we recursively define the \emph{quasi-shuffle product}~$\ast_b$ as the $\Q$-bilinear product satisfying
\begin{align*}
    b_i u \ast_b b_j v \=  b_i ( u \ast b_j v)+ b_j (b_i u \ast v) + (b_i \diamond b_j)(u \ast v) \qquad(i,j \geq 0 \text{ and } u,v \in \QB)
\end{align*}
and $1 \ast w = w \ast 1=w$ for any $w \in \QB$. Here $b_i \diamond b_j = \delta_{ij >0}\, b_{i+j}$ is an associative and commutative product on $\Q \mathcal{B}$. 
As before, the space~$(\QB,\ast)$ is a commutative $\Q$-algebra  (see~\cite{H} and also~\cite{HI}).
The subspace~$\QB^0$ of words not starting in $b_0$ is closed under $\ast$ which gives rise to a commutative $\Q$-algebra $(\QB^0,\ast)$. 

 Define  the $\Q$-linear involution $\tau: \QB^0 \rightarrow \QB^0 $ by
\[\tau(b_{k_1} b_0^{m_1} \cdots b_{k_s} b_0^{m_s}) \df b_{m_s+1} b_0^{k_s-1} \cdots b_{m_1+1} b_0^{k_1-1} \qquad (k_1,\ldots,k_s \geq 1 \text{ and } m_1,\dots,m_s\geq 0).\]

We now want to compare the algebra $(\QA,\qsh)$ with $(\Q \langle \mathcal{B}\rangle^0,\ast)$. For this define the $\Q$-linear map $\varphi: \Q \langle \mathcal{B}\rangle^0 \rightarrow \QA$ for all $r\geq 1$ on the generators by 
\begin{align*}
   \sum_{\substack{k_1,\dots,k_r\geq 1\\m_1,\dots,m_r \geq 0}} 	\varphi(b_{k_1} b_0^{m_1}\cdots b_{k_r} b_0^{m_r})\,X_1^{k_1-1} \cdots X_r^{k_r-1} Y_1^{m_1} \cdots Y_r^{m_r} \df     \mathfrak{A}\bi{X_1,\dots,X_r}{Y_1,Y_2-Y_1,\dots,Y_r-Y_{r-1}}\,.
\end{align*}
\begin{proposition}[{\cite[Theorem~6.4]{Bu1}}]
The map $\varphi: (\Q \langle \mathcal{B}\rangle^0,\ast_b) \rightarrow (\QA,\ast)$ is an isomorphism of graded $\Q$-algebras satisfying additionally $\sigma \circ \varphi = \varphi \circ \tau$.
\end{proposition}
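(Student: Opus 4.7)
My plan is to establish the three claims — bijectivity, compatibility with products, and intertwining with involutions — in that order, culminating in the cleanest one.

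For bijectivity, I would expand the defining identity. Because $\genA_r$ carries the factorial-weighted monomials $\frac{Y_j^{d_j}}{d_j!}$ while the left-hand side of the definition extracts coefficients of $\prod_j Y_j^{m_j}$ (no factorials), the substitution $Y_j\mapsto Y_j-Y_{j-1}$ with $Y_0=0$ makes $\varphi(b_{k_1}b_0^{m_1}\cdots b_{k_r}b_0^{m_r})$ into a $\Q$-linear combination of basis elements $\ai{k_1,\dots,k_r}{d_1,\dots,d_r}$ constrained by $\sum_{i\leq j}d_i\leq \sum_{i\leq j}m_i$ for all $j$, with equality at $j=r$. A direct computation shows the diagonal coefficient at $\vec{d}=\vec{m}$ equals $(m_1!\cdots m_r!)^{-1}\neq 0$. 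Hence $\varphi$ is triangular with respect to this partial order and invertible on each weight-graded piece; the grading by total weight is preserved because the substitution preserves total degree in $Y$.

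For the algebra-homomorphism property, I would use the fact that both $(\QB^0,\ast_b)$ and $(\QA,\ast)$ are quasi-shuffle algebras in the sense of Hoffman, and verify compatibility by translating the recursive defining identities of $\ast_b$ and $\ast$ into generating-series formulas, then comparing coefficients under the invertible change of variables $Z_j=Y_j-Y_{j-1}$. The key bookkeeping is that the $\diamond$-composition $\ai{k_1}{d_1}\diamond\ai{k_2}{d_2}=\ai{k_1+k_2}{d_1+d_2}$ on the $\A$-side must pull back under $\varphi$ to a structure consistent with the $\B$-side contraction $b_i\diamond b_j=\ind_{i,j>0}\,b_{i+j}$, with the internal $b_0$'s on the $\B$-side encoding shifts in the $Y$-variables. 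This is the main obstacle: the presence of $b_0$-blocks in the middle of words has no direct counterpart on the $\A$-side, so one must carefully track how they interact with stuffle recursion.

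The intertwining $\sigma\circ\varphi=\varphi\circ\tau$ can be verified directly on generating series. Let $H(\X,\Y):=\sum b_{k_1}b_0^{m_1}\cdots b_{k_r}b_0^{m_r}\prod X_j^{k_j-1}Y_j^{m_j}$. Since $\tau$ reverses the word and interchanges the roles of the pairs $(k_j-1,m_j)$, it corresponds to the substitution $(\X,\Y)\mapsto(Y_r,\dots,Y_1;\,X_r,\dots,X_1)$ in $H$. Applying $\varphi$ afterwards yields
\[\genA_r\bi{Y_r,\dots,Y_1}{X_r,\,X_{r-1}-X_r,\dots,\,X_1-X_2},\]
since the differences $Y_j'-Y_{j-1}'$ with $Y_j'=X_{r-j+1}$ equal $X_{r-j+1}-X_{r-j+2}$. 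Conversely, applying $\sigma$ to $\varphi(H)=\genA_r\bi{\X}{Y_1,Y_2-Y_1,\dots,Y_r-Y_{r-1}}$ telescopes the partial sums $Y_1'+\dots+Y_j'$ (where $Y_j'=Y_j-Y_{j-1}$) back to $Y_j$ in the $Y$-arguments of $\genA_r$, producing exactly the same expression. Matching coefficients of $\prod X_j^{k_j-1}Y_j^{m_j}$ gives the identity on each basis element. The principal obstacle will be step two; step three works precisely because the balanced substitution was designed to convert the intricate swap $\sigma$ into the simple reversal $\tau$.
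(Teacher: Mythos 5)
The paper does not prove this proposition at all: it is imported verbatim from Burmester's thesis (\cite[Theorem~6.4]{Bu1}), so there is no in-paper argument to compare yours against. Judged on its own terms, your first and third steps are correct. For bijectivity, the substitution $Y_j\mapsto Y_j-Y_{j-1}$ does make $\varphi$ triangular on each finite-dimensional (weight, depth)-bigraded piece with respect to the order $\sum_{i\le j}d_i\le\sum_{i\le j}m_i$, and the diagonal coefficient $(m_1!\cdots m_r!)^{-1}$ is indeed nonzero. For the intertwining, $\tau$ is the substitution $(\X,\Y)\mapsto(Y_r,\dots,Y_1;X_r,\dots,X_1)$ in your series $H$, and the partial sums appearing in $\sigma$ telescope against the differences $Y_j-Y_{j-1}$, so both $\sigma\circ\varphi$ and $\varphi\circ\tau$ produce $\genA_r\bi{Y_r,\dots,Y_1}{X_r,\,X_{r-1}-X_r,\dots,\,X_1-X_2}$; this part is complete.

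The genuine gap is step two. The identity $\varphi(u\ast_b v)=\varphi(u)\ast\varphi(v)$ is the actual substance of the theorem, and you do not prove it: you describe a plan (``translate the recursive identities into generating-series formulas and compare coefficients'') and then name the obstruction yourself without overcoming it. A single letter $\ai{k}{d}$ of $\A$ corresponds to an entire block $b_kb_0^m$ on the $\B$-side, so the two quasi-shuffle products live on different alphabets and $\varphi$ is not induced by any map of alphabets; the recursion for $\ast_b$ peels off one letter $b_i$ at a time while the recursion for $\ast$ peels off whole blocks, and the vanishing $b_0\diamond b_j=0$ has no visible counterpart in $\ai{k_1}{d_1}\diamond\ai{k_2}{d_2}=\ai{k_1+k_2}{d_1+d_2}$. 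Resolving this requires an explicit coefficient identity relating the two products under the change of variables (this is the nontrivial computation in Burmester's proof), or an intermediate comparison of both algebras with a common shuffle algebra via Hoffman--Ihara exponential/logarithm maps; a low-depth check such as $\varphi(b_1b_0\ast_b b_1)=\ai{1}{1}\ast\ai{1}{0}$ is consistent but is not a proof. As written, the multiplicativity --- and hence the ``isomorphism of $\Q$-algebras'' part of the statement --- remains unestablished.
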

As a consequence of this proposition, we get 
\begin{align*}
    \fmes \cong \faktor{ (\QB^0 ,\ast)}{T}\,,
\end{align*}
where $T$ is the ideal in  $(\QB^0 ,\ast)$ generated by $\tau(w)-w$ for all $w\in \QB^0$. In particular, the algebra~$\fmes$ in~\cite{Bu3} agrees with the original definition of the algebra of formal multiple Eisenstein series used here. The operators $D,W,\delta$ can be easily translated into this setup. For example, for a word $w=b_{k_1} b_0^{m_1} \cdots b_{k_r} b_0^{m_r}$ with $k_1,\dots,k_r\geq 1, m_1,\dots,m_r\geq 0$ the operator $D$ is given by
\begin{align*}
    D(w) = \sum_{1 \leq i \leq j \leq r} k_i (m_j + 1)\, b_{k_1} b_0^{m_1} \cdots b_{k_i + 1} \cdots b_{k_j} b_0^{m_j+1} \cdots b_{k_r} b_0^{m_r}.
\end{align*}
\newpage
\appendix
\section{Classical double shuffle and swap \& stuffle}\vspace{-5pt}
{\begin{center}(by Nils Matthes) \end{center}}\vspace{5pt}
To make this appendix self-contained, we will recall some facts on quasi-shuffle algebras which were already mentioned in the Introduction or in Section~\ref{sec:derivationsforqsh}. All algebras are assumed to be commutative, associative, and unital unless stated otherwise. 

\subsection{The extended double shuffle relations}

\newcommand{\ishuffle}{\overline{\shuffle}}

\subsubsection{Product structures on words}
Let $\LL_z=\{z_k \mid k\geq 1\}$ be the alphabet of letters indexed by the positive integers and let $\bQ\langle \LL_z\rangle$ be the free $\bQ$-vector space on the free monoid~$\LL_z^\ast$ of words on~$\LL_z\mspace{1mu}$, including the empty word~$1$. It can be equipped with two product structures: the first is the index shuffle product\footnote{The index shuffle product $\ishuffle$ is the shuffle product on the alphabet $\LL_z\mspace{1mu}$. One should not confuse it with the shuffle product $\shuffle$ on the alphabet $\LL_{xy}\mspace{1mu}$.} which is defined for words $w,w_1,w_2 \in \LL_z^\ast$ and letters $\sy_{k_1},\sy_{k_2} \in \LL_z$ by the recursive formula
\[
\begin{aligned}
\sy_{k_1}w_1\ishuffle \sy_{k_2}w_2&=\sy_{k_1}(w_1\ishuffle \sy_{k_2}w_2)+\sy_{k_2}(\sy_{k_1}w_1\ishuffle w_2)\\
1\ishuffle w&=w\ishuffle 1=w.
\end{aligned}
\]
The second is the stuffle product, which is defined similarly to before, but with one extra term:
\[
\begin{aligned}
\sy_{k_1}w_1\ast \sy_{k_2}w_2&=\sy_{k_1}(w_1\ast \sy_{k_2}w_2)+\sy_{k_2}(\sy_{k_1}w_1\ast w_2)+\sy_{k_1+k_2}(w_1\ast w_2)\\
1\ast w&=w\ast 1=w.
\end{aligned}
\]
Each of these products turns $\bQ\langle \LL_z\rangle$ into a $\bQ$-algebra whose unit element is the empty word in both cases. Together with the deconcatenation coproduct
\[
\begin{aligned}
\Delta: \bQ\langle \LL_z\rangle&\rightarrow \bQ\langle \LL_z\rangle \otimes \bQ\langle \LL_z\rangle\\
\sy_{k_1}\ldots\sy_{k_n} &\mapsto
\sum_{i=0}^{n}\sy_{k_1}\ldots\sy_{k_i}\otimes \sy_{k_{i+1}}\ldots\sy_{k_n},
\end{aligned}
\]
both $(\bQ\langle \LL_z\rangle,\ishuffle,\Delta)$ as well as $(\bQ\langle \LL_z\rangle,\ast,\Delta)$ are commutative Hopf algebras.

\subsubsection{Commutative power series}

Let $R$ be a commutative unital $\bQ$-algebra. It is well known that $\bQ$-linear morphisms $\varphi \in \Hom(\bQ\langle \LL_z\rangle,R)$ can be identified with tuples of power series. More precisely, there is an $R$-linear isomorphism
\begin{equation} \label{eqn:trafo}
\begin{aligned}
\Hom(\bQ\langle \LL_z\rangle,R)&\rightarrow \prod_{n\geq 0}R\llbracket X_1,\ldots,X_n\rrbracket\\
\varphi&\mapsto (\Phi_n)_{n\geq 0},
\end{aligned}
\end{equation}
where $\Phi_0=\varphi(1)$ and $\Phi_n(X_1,\ldots,X_n)=\sum_{k_1,\ldots,k_n\geq 1}\varphi(\sy_{k_1}\ldots\sy_{k_n})\,X_1^{k_1-1}\ldots X_n^{k_n-1}$, for $n\geq 1$. Under this correspondence, the convolution product $\varphi \mspace{1mu}\star\mspace{1mu} \psi:=m_R\circ(\varphi\otimes \psi)\circ\Delta \in \Hom(\bQ\langle \LL_z\rangle,R)$, where $m_R$ denotes the multiplication in $R$, corresponds to the element $(\Xi_n)_{n\geq 0}$ defined by
\[
\Xi_n(X_1,\ldots,X_n)=\sum_{i=0}^n\Phi(X_1,\ldots,X_i)\,\Psi_{n-i}(X_{i+1},\ldots,X_n).
\]
Now define an automorphism
\[
\begin{aligned}
\sigma: \prod_{n\geq 0}R\llbracket X_1,\ldots,X_n\rrbracket&\rightarrow \prod_{n\geq 0}R\llbracket X_1,\ldots,X_n\rrbracket\\
(\Phi_n)_{n\geq 0}&\mapsto (\sigma(\Phi_n))_{n\geq 0},
\end{aligned}
\]
where $\sigma(\Phi_n)(X_1,\ldots,X_n):=\Phi_n(X_1+\ldots+X_n,X_1+\ldots+X_{n-1},\ldots,X_1)$. Via the isomorphism~\eqref{eqn:trafo}, it induces an automorphism $\sigma^*: \Hom(\bQ\langle \LL_z\rangle,R)\rightarrow \Hom(\bQ\langle \LL_z\rangle,R)$.

\subsubsection{The extended double shuffle relations}
Given a $\bQ$-linear morphism $\varphi \in \Hom(\bQ\langle \LL_z\rangle,R)$, we define a new morphism $\varphi_{\rm corr} \in \Hom(\bQ\langle \LL_z\rangle,R)$ by sending a word $\sy_{k_1}\ldots\sy_{k_n}$ to its coefficient in the power series
\[
\exp\left(\sum_{n=2}^{\infty}\frac{(-1)^n}{n}\varphi(\sy_n)\cdot \sy_1\right) \in R\langle\!\langle \sy_1\rangle\!\rangle.
\]
\begin{definition}\label{def:regdsh}
We say that $\varphi$ satisfies the \emph{extended double shuffle relations}, if, for all words $w_1,w_2 \in \LL_z^*$, we have that:
\begin{itemize}\itemsep3pt
\item[(i)] $\varphi(w_1\ast w_2)=\varphi(w_1)\,\varphi(w_2)$;
\item[(ii)] $\sigma^*(\varphi_{\rm corr}\star\varphi)(w_1\ishuffle w_2)=\sigma^*(\varphi_{\rm corr}\star\varphi)(w_1) \, \sigma^*(\varphi_{\rm corr}\star\varphi)(w_2)$.
\end{itemize}
\end{definition}
\begin{remark}\label{rem:racinetikz}
This definition is equivalent to Racinet's definition (\cite{R}) in the case where $\varphi(\sy_1)=0$, a condition which is, in fact, part of Racinet's definition. The key point is that, by~\cite[Proposition~8]{I}, $\sigma^*(\varphi_{\rm corr}\star \varphi)$ is a homomorphism for the shuffle product if and only if the map
\[
(\varphi_{\rm corr}\star\varphi)\circ\pi_{\LL_z}: \fH^1\rightarrow R
\]
is a homomorphism for the shuffle product on $\fH^1=\bQ+\bQ\langle x,y \rangle y\mspace{1mu}$. Here $\pi_{\LL_z}: \fH^1\rightarrow \bQ\langle \LL_z\rangle$ is the $\bQ$-linear isomorphism defined by mapping $x^{k_1-1} y\cdots x^{k_n-1}y$ to $\sy_{k_1}\ldots \sy_{k_n}\mspace{1mu}$. Then one checks, by the usual duality of the underlying Hopf algebras, that this is also equivalent to $\varphi$ having the EDS property as defined in~\cite[Definition~1]{IKZ}.
\end{remark}

\subsection{Commutative power series}

In this section, we fix a $\bQ$-algebra $R$.
\subsubsection{Preliminaries}
Let
\[
\cB(R):=\prod_{n=0}^{\infty}R\llbracket X_1,\ldots,X_n,Y_1,\ldots,Y_n \rrbracket
\]
be the $R$-module of tuples of formal power series in an increasing number of variables, where $R^\times$ denotes the group of units of $R$ (cf., Equation~\eqref{eq:bimould}). It is a noncommutative, associative, unital $R$-algebra with the concatenation product
\[
\begin{aligned}
\odot: \cB(R)\times \cB(R)&\rightarrow \cB(R)\\
((F_n),(G_n)) &\mapsto (H_n),
\end{aligned}
\]
where 
\[H_n\bi{X_1,\ldots,X_n}{Y_1,\ldots,Y_n} \df \sum_{i=0}^nF_i\bi{X_1,\ldots,X_i}{Y_1,\ldots,Y_i}\,G_{n-i}\bi{X_{i+1},\ldots,X_n}{Y_{i+1},\ldots,Y_n} \qquad (n\geq 0).\]
The unit element is the tuple $\mathbbm{1}:=(1,0,0,\ldots)$ and $F\in \cB(R)$ has an inverse $F^{\odot -1}$ for $\odot$ if and only if $F_0 \in R^\times$, in which case $F^{\odot -1}$ can be defined recursively by
\[
F^{\odot -1}_0=F_0^{-1}, \qquad F^{\odot -1}_n= \sum_{i=0}^{n-1}(-1)^{i+1}\frac{F_{n-i}}{F_0}F^{\odot -1}_i.
\]
Note that there is an embedding
\begin{align}\label{eq:tilde}
\lambda: R\llbracket t\rrbracket&\rightarrow \cB(R) \nonumber\\
P(t)=\sum_{n\geq 0}a_nt^n &\mapsto \lambda(P):=(a_n)_{n\geq 0}
\end{align}
of $R$-algebras. In other words, every power series in one variable can be viewed as an element of~$\cB(R)$.

The algebra $\cB(R)$ is equipped with an involution (the \emph{swap}, cf., Definition~\ref{def:swap})
\[
\begin{aligned}
\sigma: \cB(R)&\rightarrow \cB(R)\\
(F_n)&\mapsto (\sigma(F_n)),
\end{aligned}
\]
where 
\[\sigma(F_n)\bi{X_1,\ldots,X_n}{Y_1,\ldots,Y_n}:=F_n\bi{Y_1+\ldots+Y_n,Y_1+\ldots+Y_{n-1},\ldots,Y_1}{X_n,X_{n-1}-X_n,\ldots,X_1-X_2}. \]
The space~$\cB(R)$ contains two distinguished $R$-submodules
\[
\cM_X(R):= \prod_{n=0}^{\infty}R\llbracket X_1,\ldots,X_n\rrbracket, \qquad \cM_Y(R):= \prod_{n=0}^{\infty}R\llbracket Y_1,\ldots,Y_n\rrbracket,
\]
which clearly are both subalgebras.
\begin{proposition} \label{prop:sigmaanticommutes}
For all $F\in \cM_Y(R)$ and $G \in \cM_X(R)$, we have
\[
\sigma(F\odot G)=\sigma(G)\odot \sigma(F).
\]
\end{proposition}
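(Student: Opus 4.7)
The plan is to reduce the identity to a direct, but bookkeeping-heavy, computation that exploits the very special form of elements of $\cM_X(R)$ and $\cM_Y(R)$. The first step is to observe that $\sigma$ exchanges these two subalgebras: if $G \in \cM_X(R)$, then $G_j\bi{X_1,\ldots,X_j}{Y_1,\ldots,Y_j}$ depends only on $X_1,\ldots,X_j$, so the swap substitution yields
\[ \sigma(G)_j\bi{X_1,\ldots,X_j}{Y_1,\ldots,Y_j} \= G_j(Y_1+\ldots+Y_j,\, Y_1+\ldots+Y_{j-1},\ldots,\, Y_1),\]
which lies in $\cM_Y(R)$. Symmetrically $\sigma(F) \in \cM_X(R)$ for $F\in\cM_Y(R)$. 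In particular both sides of the claim involve a concatenation product of an element of $\cM_Y(R)$ with one of $\cM_X(R)$, so there are never any "mixed" substitutions to worry about.

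Next I would write out $(F\odot G)_n$ explicitly. Because $F_i$ uses no $X$-variables and $G_{n-i}$ uses no $Y$-variables, the general formula collapses to
\[ (F\odot G)_n\bi{X_1,\ldots,X_n}{Y_1,\ldots,Y_n} \= \sum_{i=0}^n F_i(Y_1,\ldots,Y_i)\,G_{n-i}(X_{i+1},\ldots,X_n).\]
Applying $\sigma$ amounts to substituting $X_j\mapsto Y_1+\ldots+Y_{n-j+1}$ and $Y_j\mapsto X_{n-j+1}-X_{n-j+2}$ (with $X_{n+1}=0$); after this substitution the $F_i$-factor becomes $F_i(X_n,X_{n-1}-X_n,\ldots,X_{n-i+1}-X_{n-i+2})$ while the $G_{n-i}$-factor becomes $G_{n-i}(Y_1+\ldots+Y_{n-i},\ldots,Y_1)$.

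In parallel I would expand $\sigma(G)\odot \sigma(F)$ using the explicit formulas for $\sigma(G)\in\cM_Y(R)$ and $\sigma(F)\in\cM_X(R)$ worked out in the first step. The concatenation product then reads
\[ (\sigma(G)\odot\sigma(F))_n \= \sum_{j=0}^n G_j(Y_1+\ldots+Y_j,\ldots,Y_1)\,F_{n-j}(X_n,X_{n-1}-X_n,\ldots,X_{j+1}-X_{j+2}).\]
Re-indexing by $i=n-j$ and using commutativity of~$R$, this matches the expansion of $\sigma(F\odot G)_n$ obtained above, term by term.

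The only real obstacle here is the index bookkeeping under the two reversals built into the definition of $\sigma$; the substitution $X_j\mapsto Y_1+\ldots+Y_{n-j+1}$ reverses the order of the variables, and one has to check that the $F$-factor on the right-hand side ends up attached precisely to the \emph{last} block of $X$-variables (rather than the first). Once the substitution is carried out cleanly on each of the two ``pure'' factors separately, no further calculation is required.
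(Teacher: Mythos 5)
Your proposal is correct and follows essentially the same route as the paper: both expand $(F\odot G)_n$ using that $F_i$ involves only $Y$-variables and $G_{n-i}$ only $X$-variables, apply the swap substitution to each pure factor, expand $(\sigma(G)\odot\sigma(F))_n$, and match the two sums after re-indexing $i=n-j$ and using commutativity of $R$. The substitutions and resulting formulas you write down agree with those in the paper's proof.
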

\begin{proof}
This is a straightforward computation. For every $n\geq 0$, we have on one hand
\[
\sigma((F\odot G)_n)=\sum_{i=0}^nF_i(X_n,\ldots,X_{n-i+1}-X_{n-i+2})\,G_{n-i}(Y_1+\ldots+Y_{n-i},\ldots,Y_1)
\]
and on the other hand
\[
\begin{aligned}
(\sigma(G)\odot\sigma(F))_n&=\sum_{i=0}^nG_i(Y_1+\ldots+Y_i,\ldots,Y_1)\,F_{n-i}(X_n,\ldots,X_{i+1} - X_{i+2})\\
&=\sum_{i=0}^nG_{n-i}(Y_1+\ldots+Y_{n-i},\ldots,Y_1)\,F_i(X_n,\ldots,X_{n-i+1}-X_{n-i+2}),
\end{aligned}
\]
which are clearly equal, since $R$ is commutative.
\end{proof}
\subsubsection{A bijection}

Now define a map
\[
\begin{aligned}
\Phi: \cM_X(R)^{\times}&\rightarrow \cB(R)^{\times}\\
F&\mapsto \sigma(F)\odot c(F)^{\odot -1}\odot F,
\end{aligned}
\]
where $c: \cM_X(R)\rightarrow \prod_{n=0}^{\infty}R$ is the map which in each component sends a power series to its constant term. In the other direction, define
\[
\begin{aligned}
\Psi: \cB(R)^\times&\rightarrow \cM_X(R)^\times\\
F&\mapsto c_X(F),
\end{aligned}
\]
where $c_X: \cB(R)\rightarrow \cM_X(R)$ is the map which in each component sets $Y_1=\ldots=Y_n=0$.
\begin{proposition} \label{prop:bijectionv1}
The maps $\Phi$ and $\Psi$ define mutually inverse bijections between $\cM_X(R)^\times$ and the subset of $\cB(R)^\times$ consisting of those elements $F$ which satisfy the following two conditions:
\begin{itemize}
\item[(i)] $\sigma(F)=F$;
\item[(ii)] There exist elements $G \in \cM_Y(R)^\times$ and $H\in \cM_X(R)^\times$ such that $c(G)=\mathbbm{1}$ and such that $F=G\odot H$.
\end{itemize}
\end{proposition}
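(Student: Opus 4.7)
The plan is to verify four assertions: $\Phi$ sends $\cM_X(R)^\times$ into the subset $S \subset \cB(R)^\times$ defined by (i) and (ii); $\Psi$ sends $S$ to $\cM_X(R)^\times$; $\Psi\circ\Phi = \operatorname{id}$; and $\Phi\circ\Psi = \operatorname{id}_S$. Throughout, three structural facts make the verification essentially mechanical. First, $\sigma$ interchanges $\cM_X(R)$ and $\cM_Y(R)$. Second, the intersection $\cM_X(R)\cap \cM_Y(R)$ equals the subalgebra $\prod_{n\geq 0}R$ of constant tuples, which is fixed pointwise by $\sigma$. Third, the maps $c$ and $c_X$ are algebra homomorphisms with respect to $\odot$, as one checks directly from the definition of the concatenation product.

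To see $\Phi(F)\in S$, set $G = \sigma(F)\odot c(F)^{\odot -1}$ and $H = F$. Then $G\in \cM_Y(R)^\times$ and $c(G) = c(F)\odot c(F)^{\odot -1} = \mathbbm{1}$, yielding (ii). For (i), I would regroup $\Phi(F) = \sigma(F)\odot (c(F)^{\odot -1}\odot F)$ and apply Proposition~\ref{prop:sigmaanticommutes} (with $\sigma(F)\in \cM_Y$ and $c(F)^{\odot -1}\odot F\in \cM_X$) to obtain $\sigma(\Phi(F)) = \sigma(c(F)^{\odot -1}\odot F)\odot F$; applying the proposition a second time, now viewing the constant factor $c(F)^{\odot -1}$ as an element of $\cM_Y$, collapses this to $\sigma(F)\odot c(F)^{\odot -1}\odot F = \Phi(F)$. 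That $\Psi\circ\Phi = \operatorname{id}$ is then immediate from multiplicativity of $c_X$ together with the identity $c_X(\sigma(F)) = c(F)$ (since $\sigma(F)$ involves only $Y$-variables, setting them to zero returns the constant term of~$F$).

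The heart of the argument is $\Phi\circ\Psi = \operatorname{id}_S$, which rests on a uniqueness lemma for factorizations: if $F = G\odot H = G'\odot H'$ are two factorizations of the form in~(ii), then $(G')^{\odot -1}\odot G = H'\odot H^{\odot -1}$ lies in $\cM_X(R)\cap \cM_Y(R)$, and applying $c$ to the left-hand side yields $\mathbbm{1}$, whence $G = G'$ and $H = H'$. Given $F\in S$ with such a factorization, this identifies $\Psi(F) = c_X(F) = H$ (using $c_X(G) = c(G) = \mathbbm{1}$ since $G\in \cM_Y$). To recover $G$ from $H$: Proposition~\ref{prop:sigmaanticommutes} combined with $\sigma(F) = F$ yields $\sigma(H)\odot \sigma(G) = G\odot H$, which rearranges to $\sigma(G)\odot H^{\odot -1} = \sigma(H)^{\odot -1}\odot G$; both sides lie in $\cM_X(R)\cap \cM_Y(R)$ and are therefore constant, and applying $c$ identifies the common value as $c(H)^{\odot -1}$. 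Hence $\sigma(G) = c(H)^{\odot -1}\odot H$, and applying $\sigma$ once more, using Proposition~\ref{prop:sigmaanticommutes} to commute $\sigma$ past the scalar factor, gives $G = \sigma(H)\odot c(H)^{\odot -1}$. Therefore $\Phi(\Psi(F)) = \sigma(H)\odot c(H)^{\odot -1}\odot H = G\odot H = F$. The main obstacle throughout is bookkeeping: at each step one must carefully track which factor lies in $\cM_X(R)$, which in $\cM_Y(R)$, and which is constant, in order to legitimately invoke Proposition~\ref{prop:sigmaanticommutes} only in its stated form.
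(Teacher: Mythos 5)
Your proposal is correct and follows essentially the same route as the paper: verify that $\Phi$ lands in the set cut out by (i) and (ii) using Proposition~\ref{prop:sigmaanticommutes} and multiplicativity of $c$, get $\Psi\circ\Phi=\operatorname{id}$ from $c_X(\sigma(F))=c(F)$, and recover $G=\sigma(H)\odot c(H)^{\odot-1}$ from $\sigma$-invariance of $G\odot H$. The only cosmetic differences are your (harmless but redundant) uniqueness-of-factorization lemma and your use of the ``both sides lie in $\cM_X(R)\cap\cM_Y(R)$, hence are constant'' argument where the paper simply applies $c_X$ to both sides of $G\odot H=\sigma(H)\odot\sigma(G)$.
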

\begin{proof}
We begin by proving that $\Phi(F)$ satisfies conditions (i) and (ii) above, for every $F\in \cM_X(R)^\times$. For condition (i), this follows directly from Proposition~\ref{prop:sigmaanticommutes} since $\sigma(F)\in \cM_Y(R)$. For condition (ii), it is clear that $\Phi(F)=G\odot H$ for $H=F \in \cM_X(R)^\times$ and $G=\sigma(F)\odot c(F)^{\odot -1} \in \cM_Y(R)^\times$. Moreover, $c(G)=c(\sigma(F))\odot c(F)^{\odot -1}=\mathbbm{1}$, since $c$ is an algebra homomorphism for $\odot$ and $c(\sigma(F))=c(F)$. Hence $\Phi(F)$ satisfies conditions (i) and (ii) of the proposition.

We next prove that $\Phi$ and $\Psi$ are mutually inverse. Indeed, we have
\[
\Psi(\Phi(F))=c_X(\sigma(F)\odot c(F)^{\odot -1}\odot F)=c(\sigma(F))\odot c(F)^{-1}\odot F=F,
\]
since $c_X$ agrees with $c$ on the subspace $\cM_Y(R)\subset \cB(R)$ and is the identity on $\cM_X(R)$. On the other hand, since $c(G)=\mathbbm{1}$, we have
\[
\Phi(\Psi(G\odot H))=\Phi(H)=\sigma(H)\odot c(H)^{\odot -1}\odot H.
\]
In order to prove that this equals $G\odot H$, since $H$ is invertible, it suffices to verify that $G=\sigma(H)\odot c(H)^{\odot -1}$. Indeed, since $G\odot H$ is $\sigma$-invariant, we have
\[
G\odot H=\sigma(G\odot H)=\sigma(H)\odot \sigma(G),
\]
by Proposition~\ref{prop:sigmaanticommutes}. Applying $c_X$ to both sides of this equality, we see that $H=c(H)\odot \sigma(G)$, and multiplying both sides with $c(H)^{\odot -1}$, then applying $\sigma$ yields the desired equality $G=\sigma(H)\odot c(H)^{\odot -1}$.
\end{proof}

\subsubsection{Hopf algebra structure}

We next equip $\cB(R)$ with the structure of a complete Hopf algebra over $R$.
To begin with, define a homomorphism of $R$-algebras by $(k\geq 1, d\geq 0 \text{  and } (k,d)\neq (1,0))$
\[
\begin{aligned}
\Delta_{\ast}: R[X,Y]&\rightarrow R[X,Y]\otimes_R R[X,Y]\\
X^{k-1}Y^d&\mapsto  X^{k-1}Y^d \otimes 1+1\otimes X^{k-1}Y^d\\
&\qquad +\sum_{\substack{k_1,k_2\geq 1 \\k_1+k_2=k }}\sum_{\substack{d_1,d_2\geq 0 \\ d_1+d_2=d}}\binom{d_1+d_2}{d_1}\,X^{k_1-1}Y^{d_1}\otimes X^{k_2-1}Y^{d_2}.
\end{aligned}
\]

Since $\Delta_{\ast}$ respects the degree of monomials, it uniquely extends to an $R$-algebra homomorphism $\widehat{\Delta}_{\ast}: R\llbracket X,Y\rrbracket\rightarrow R\llbracket X,Y\rrbracket\,\widehat{\otimes}_R\, R\llbracket X,Y\rrbracket$, where $\widehat{\otimes}$ denotes the completed tensor product. Finally, it extends to an $R$-linear map
\[
\widehat{\Delta}_{\ast}: \cB(R)\rightarrow \cB(R)\;\widehat{\otimes}\;\cB(R)
\]
by the requirement that $\Delta_{\ast}$ is an algebra homomorphism for the concatenation product $\odot$. The triple $(\cB(R),\odot,\widehat{\Delta}_{\ast})$ is then a complete bialgebra, and one can show that it is in fact a Hopf algebra. We let $\cG(R) \subset \cB(R)$ denote the subspace of group-like elements, i.e., elements $F=(F_n)_{n\geq 0}$ which satisfy $\widehat{\Delta}_*(F)=F\otimes F$ and $F_0=1$. In fact, $\cG(R)\subset \cB(R)$ is a subgroup for the concatenation product.

Our next goal is to state and prove a variant of Proposition~\ref{prop:bijectionv1} for group-like elements. First of all, the map $\Psi: \cB(R)^\times\rightarrow \cM_X(R)^\times$ which sends all variables $Y_i=0$ respects the subspace of group-like elements, hence induces a map
\[
\Psi: \cG(R)\rightarrow \cG_X(R),
\]
where $\cG_X(R)$ denotes the group-like elements of the Hopf subalgebra $\cM_X(R)\subset \cB(R)$. Hence, if in addition $F=G\odot H$ with $G,H$ as in Proposition~\ref{prop:bijectionv1}, then $H \in \cG_X(R)$. We next state a refinement of this result.
\begin{proposition}\label{prop:appendix}
Let $F \in \cB(R)^\times$ satisfy conditions (i) and (ii) of Proposition~\ref{prop:bijectionv1}. Then the following are equivalent:
\begin{enumerate}
\item[{\upshape(i)}] $F \in \cG(R)$;
\item[{\upshape(ii)}] $H\in \cG_X(R)$, and $\sigma(\lambda(F_{\rm corr})\odot c_X(F)) \in \cG_Y(R)$, where $\lambda$ is defined by~\eqref{eq:tilde}, 
\[
F_{\rm corr}:=\exp\left(\sum_{n=2}^{\infty}\frac{(-1)^n}{n}f_{n,0}\, t^n  \right),
\]
and $f_{n,0}$ is the coefficient of $X^{n-1}$ in $F_1 \in R\llbracket X,Y\rrbracket$.
\end{enumerate}
\end{proposition}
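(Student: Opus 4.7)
The plan is to combine Proposition~\ref{prop:bijectionv1} with the observation that the two projections $c_X\colon\cB(R)\to\cM_X(R)$ and $c_Y\colon\cB(R)\to\cM_Y(R)$, obtained by setting all $Y_i=0$ respectively all $X_i=0$, are bialgebra homomorphisms. Both assertions are direct verifications from the formula for $\Delta_\ast$: when $d=0$ only the $d_1=d_2=0$ terms of the $\diamond$-sum survive, and when $k=1$ the sum is over $k_1+k_2=1$ with $k_1,k_2\geq 1$, which is empty. In particular $\cM_X(R)$ and $\cM_Y(R)$ are Hopf subalgebras of $\cB(R)$, and $\cG_X(R)=\cG(R)\cap\cM_X(R)$, $\cG_Y(R)=\cG(R)\cap\cM_Y(R)$.

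Using these projections I would first establish the intermediate equivalence
\[
F\in\cG(R)\iff H\in\cG_X(R)\text{ and }G\in\cG_Y(R),
\]
where $F=G\odot H$ is the decomposition of Proposition~\ref{prop:bijectionv1}. The forward direction uses $H=c_X(F)$ and $G=F\odot H^{\odot -1}$, together with closure of the group of group-likes under $\odot$ and inverses; the reverse is immediate from $\widehat{\Delta}_\ast(G\odot H)=(G\otimes G)(H\otimes H)=GH\otimes GH$.

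The remaining task is to translate $G\in\cG_Y(R)$ into the condition on $G'\df\sigma(\lambda(F_{\rm corr})\odot H)$ appearing in the statement. By Proposition~\ref{prop:sigmaanticommutes} together with the trivial identity $\sigma\circ\lambda=\lambda$ (the components of $\lambda(P)$ lie in $R$, so $\sigma$ acts trivially on them), one has $G'=\sigma(H)\odot\lambda(F_{\rm corr})=G\odot c(H)\odot\lambda(F_{\rm corr})$. The heart of the proof is now a Newton-type identity: for $H\in\cG_X(R)$ corresponding to the stuffle character $\varphi_H$ with $\varphi_H(\sy_k)=f_{k,0}$, expanding $\sy_1^{\ast n}$ in the stuffle algebra yields
\[
\sum_{n\geq 0}\varphi_H(\sy_1^n)\,t^n=\exp\Bigl(\sum_{n\geq 1}\tfrac{(-1)^{n-1}}{n}f_{n,0}\,t^n\Bigr),
\]
equivalently $\lambda^{-1}(c(H)^{\odot -1})=\exp\bigl(\sum_{n\geq 1}\tfrac{(-1)^{n}}{n}f_{n,0}t^n\bigr)$. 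The $n\geq 2$ contributions then cancel against those in $F_{\rm corr}$, leaving $c(H)\odot\lambda(F_{\rm corr})=\lambda(\exp(f_{1,0}t))$ and hence $G'=G\odot\lambda(\exp(f_{1,0}t))$.

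To conclude I would verify that $\lambda(\exp(at))\in\cG_Y(R)$ for every $a\in R$: under the coefficient-extraction dictionary this element corresponds to the linear map sending $\sy_1^n\mapsto a^n/n!$ and vanishing on all other words, which is immediately checked to be a stuffle character; since it lies in $\cM_Y(R)$, it belongs to $\cG(R)\cap\cM_Y(R)=\cG_Y(R)$. As $\cG_Y(R)$ is a group, $G\in\cG_Y(R)\iff G'=G\odot\lambda(\exp(f_{1,0}t))\in\cG_Y(R)$, and combining this with the intermediate equivalence yields the proposition. The main obstacle I anticipate is the Newton-type identity above: it is the step where the quasi-shuffle (rather than purely shuffle) nature of the stuffle coproduct is used essentially, and the cleanest route is probably an induction on $n$ via the stuffle expansion of $\sy_1^{\ast n}$, or a translation to the classical relation between complete and power-sum symmetric functions through the isomorphism with quasi-symmetric functions.
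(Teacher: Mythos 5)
Your proof is correct and follows essentially the same route as the paper: the decomposition $F=G\odot H$ from Proposition~\ref{prop:bijectionv1}, the anti-commutation of $\sigma$ with $\odot$, the group structure on group-like elements, and the identity $c(H)^{\odot -1}=\lambda(F_{\rm corr})\odot\lambda(\exp(-f_{1,0}t))$ --- your ``Newton-type identity'', which the paper simply cites as \cite[Equation~(32)]{HI} rather than proving. One small inaccuracy: $c_Y$ is \emph{not} a coalgebra homomorphism (for instance $c_Y(XY^d)=0$ while $(c_Y\otimes c_Y)\Delta_{\ast}(XY^d)=\sum_{d_1+d_2=d}\binom{d}{d_1}Y^{d_1}\otimes Y^{d_2}\neq 0$), but you never actually use this claim --- what your argument needs is only that $\cM_Y(R)$ is a Hopf subalgebra, hence $\cG_Y(R)=\cG(R)\cap\cM_Y(R)$, and the computation you indicate (primitivity of the $Y^d$) does establish exactly that, so the proof is unaffected.
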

\begin{proof}
We first prove that $\text{\upshape{(i)}}\Rightarrow \text{\upshape{(ii)}}$. By the above discussion, we already know that $H\in \cG_X(R) \subset \cG(R)$. Therefore, since $\cG(R)$ is a group, we also know that $G \in \cG_Y(R)$. Moreover, by Proposition~\ref{prop:bijectionv1}, we have
\[
G=\sigma(F)\odot c(F)^{\odot -1}=\sigma(F)\odot \lambda(F_{\rm corr})\odot \lambda(\exp(-f_{1,0}t)),
\]
where the last equality follows from~\cite[Equation~(32)]{HI}. Note that $\lambda(\exp(-f_{1,0}t)) \in \cG(R)$. Hence, using Proposition~\ref{prop:sigmaanticommutes} and again the fact that $\cG(R)$ is a group, we see that $\sigma(\lambda(F_{\rm corr})\odot F) \in \cG_Y(R)$. Hence, also $\sigma(\lambda(F_{\rm corr})\odot c_X(F)) \in \cG_Y(R)$ as was to be shown.

Conversely, if both $H=c_X(F)$ and $\sigma(\lambda(F_{\rm corr})\odot c_X(F))$ are group-like, then so is $G\odot H$, since
\[
G\odot H =\sigma(c_X(F))\odot c(F)^{\odot -1}\odot c_X(F)
=\sigma(\lambda(F_{\rm corr})\odot c_X(F))\odot \lambda(\exp(-f_{1,0}t))\odot c_X(F)
\]
is a product of group-like elements, where the first equality follows from Proposition~\ref{prop:bijectionv1}.
\end{proof}

\end{document}